\newtheorem*{lemma**}{Lemma}
\newtheorem*{theorem**}{Theorem}
\theoremstyle{definition}
\numberwithin{equation}{section}
\newcommand{\globalcolor}[1]{%
	\color{#1}\global\let\default@color\current@color
}
\newif\ifdark
\definecolor{darkred}{rgb}{0.9,0.2,0.2}
\definecolor{darkblue}{rgb}{0.7,0.3,1}
\definecolor{darkgreen}{rgb}{0.1,0.9,0.1}
\definecolor{pagebackground}{rgb}{.15,.21,.18}
\definecolor{pageforeground}{rgb}{.84,.84,.85}
\definecolor{darkred}{rgb}{0.7,0.1,0.1}
\definecolor{darkblue}{rgb}{0.4,0.1,0.8}
\definecolor{darkgreen}{rgb}{0.1,0.7,0.1}
\definecolor{pagebackground}{rgb}{1,1,1}
\definecolor{pageforeground}{rgb}{0,0,0}
\DeclareMathAlphabet{\mathbbm}{U}{bbm}{m}{n}
\DeclareFontFamily{U}{BOONDOX-calo}{\skewchar\font=45 }
\DeclareFontShape{U}{BOONDOX-calo}{m}{n}{
	<-> s*[1.05] BOONDOX-r-calo}{}
\DeclareFontShape{U}{BOONDOX-calo}{b}{n}{
	<-> s*[1.05] BOONDOX-b-calo}{}
\DeclareMathAlphabet{\mcb}{U}{BOONDOX-calo}{m}{n}
\SetMathAlphabet{\mcb}{bold}{U}{BOONDOX-calo}{b}{n}
\let\epsilon\varepsilon
\def\E{{\symb E}}
\def\F{{\mathbf F}}
\def\FF{{\mathbb F}}
\def\H{{\mathscr H}}
\def\FC{\mathscr{C}}
\def\FD{\mathscr{D}}
\def\L{\mathbb L}
\def\X{\mathbb{X}}
\def\W{\mathbf{W}}
\def\WW{{ \mathbb W}}
\def\X{{\mathbf X}}
\def\XX{{\mathbb X}}
\def\Y{{\mathbf Y}}
\def\YY{{\mathbb Y}}
\def\D{{\mathcal D}}
\newcommand{\define}{\stackrel{\mbox{\tiny \rm def}}{=}}
\newcommand{\vol}{\mathcal{V}}
\newcommand{\expec}[1]{\E[#1]}
\newcommand{\Expec}[1]{\E\left[#1\right]}
\newcommand{\Sym}{\mathrm{Sym}}
\newcommand{\bl}{{\boldsymbol{\ell}}}
\newcommand{\G}{\mathbb{G}}
\newcommand{\bk}{\boldsymbol{k}}
\newcommand{\bm}{\boldsymbol{m}}
\newcommand{\herm}{\mathsf{H}}
\newcommand{\Tr}{\operatorname{{\mathrm Tr} }}
\DeclareMathOperator{\id}{id}
\renewcommand{\geq}{\geqslant}
\renewcommand{\leq}{\leqslant}
\def\C{\mathcal{C}}
\def\f{\frac}
\def\1{\mathbf{1}}
\def\${|\!|\!|}
\def\id{\mathrm{id}}
\def\<{\langle}
\def\>{\rangle}
\setlist{noitemsep,topsep=4pt}
\def\para_#1{/\!\!/_{\!#1}}
\newcommand{\fdd}{\xrightarrow{\textup{f.d.d.}}}
\def\slash{\kern0.18em/\penalty\exhyphenpenalty\kern0.18em}
\def\dash{\kern0.18em--\penalty\exhyphenpenalty\kern0.18em}
\newcommand{\vertiii}[1]{{\left\vert\kern-0.25ex\left\vert\kern-0.25ex\left\vert #1 \right\vert\kern-0.25ex\right\vert\kern-0.25ex\right\vert}}
\NewDocumentCommand{\Lin}{om}{\IfNoValueTF{#1}{\L(\R^{#2},\R^{#2})}{\L(\R^{#1},\R^{#2})}}
\newcommand*{\fat}{}
\DeclareRobustCommand*{\fat}{%
	\mathbin{\mathpalette\bigcdot@{}}}
\newcommand*{\bigcdot@scalefactor}{.5}
\newcommand*{\bigcdot@widthfactor}{1.15}
\newcommand*{\bigcdot@}[2]{%
	\sbox0{$#1\vcenter{}$}
	\sbox2{$#1\cdot\m@th$}%
	\hbox to \bigcdot@widthfactor\wd2{%
		\hfil
		\raise\ht0\hbox{%
			\scalebox{\bigcdot@scalefactor}{%
				\lower\ht0\hbox{$#1\bullet\m@th$}%
			}%
		}%
		\hfil
	}%
}
	\newtheorem{example}{Example}
\newtheorem{fact}[lemma]{Fact}
\newtheorem{condition}[lemma]{Condition}
\newcommand\blfootnote[1]{%
	\begingroup
	\renewcommand\thefootnote{}\footnote{#1}%
	\addtocounter{footnote}{-1}%
	\endgroup
}
\begin{document}
\title{\vspace{-2cm}
Functional Limit Theorems for  Volterra Processes and Applications to Homogenization}
\author{Johann Gehringer, Xue-Mei Li, and Julian Sieber \\ Imperial College London}

\maketitle

\begin{abstract}
\noindent
We prove an enhanced limit theorem for additive functionals of a multi-dimensional Volterra process $(y_t)_{t\geq 0}$ in the rough path topology. As an application, we establish weak convergence as $\varepsilon\to 0$ of the solution of the random ordinary differential equation (ODE) $\frac{d}{dt}x^\varepsilon_t=\f 1  {\sqrt \epsilon}  f(x_t^\varepsilon,y_{\frac{t}{\varepsilon}})$ and show that its limit solves a rough differential equation driven by a Gaussian field with a drift coming from the L\'evy area correction of the limiting rough driver. Furthermore, we prove that the stochastic flows of the random ODE converge to those of the Kunita type It\^o SDE $dx_t=G(x_t,dt)$, where $G(x,t)$ is a semi-martingale with spatial parameters.
\end{abstract}

\blfootnote{Johann Gehringer is supported by an EPSRC studentship, Xue-Mei Li by the EPRSC grants EP/V026100/1 
and EP/S023925/1, and Julian Sieber by the EPSRC Centre for Doctoral Training in Mathematics of Random Systems: Analysis, Modelling and Simulation (EP/S023925/1).}

{\noindent\scriptsize {\it  Keywords:}  fractional noise, multi-scale,  correlation, rough creation, rough homogenization, semi-martingales with spatial parameters, simultaneous convergence}

{\noindent\scriptsize \textit{MSC2020 Subject classification:} 34F05, 60F05, 60F17}

\setcounter{tocdepth}{2}
{\hypersetup{hidelinks}
\tableofcontents
}
\newpage
\section{Introduction}

The aim of this article is to obtain an approximate effective dynamics for the evolution of a particle moving  in a  fast oscillating non-Markovian random vector field  $f(\cdot, y_t(\omega))$. More precisely, we study the small-$\epsilon$ limit of the $\R^d$-valued solutions of the equation
\begin{equation}\label{eq:odenoisy}
 dx^\varepsilon_t= \f 1  {\sqrt \epsilon}  f(x_t^\varepsilon,y_{\frac{t}{\varepsilon}})\,dt.
\end{equation}
We begin with studying the convergence of stochastic processes of the form
$$
X^{\epsilon}_t=  \left( \sqrt{\epsilon} \int_0^{\frac{t}{\epsilon}} G_1(y_s) \,ds , \dots, \sqrt{\epsilon} \int_0^{\frac{t}{\epsilon}} G_N(y_s) \,ds\right)
$$
together with their canonical lifts $\XX_{s,t}^\epsilon\define \int_s^t (X^\epsilon_{r}-X^\epsilon_s)\,\otimes dX_r^\epsilon$, where  $(y_t)_{t\geq 0}$ is a multi-dimensional Volterra process with power law correlation decay $t^{-\beta}$ and spectral density. We obtain a functional limit theorem together with an enhanced and a rough CLT. 
The set of admissible functions $G_i$ for the CLT  are $L^2$ functions with Hermite rank bounded from below by $\frac{1}{\beta}$ (or by $\frac{2}{\beta}$ for the rough CLT) and rapidly decaying Hermite coefficients.

We then obtain a rough CLT for the function space valued stochastic process
$$
X^{\epsilon}_t= \sqrt{\epsilon} \int_0^{\frac{t}{\epsilon}} f(\cdot, y_s) \,ds=  \left( \sqrt{\epsilon} \int_0^{\frac{t}{\epsilon}} f_1(\cdot, y_s) \,ds , \dots, \sqrt{\epsilon} \int_0^{\frac{t}{\epsilon}} f_d(\cdot, y_s) \,ds \right)
$$ 
 and cast \eqref{eq:odenoisy} as a rough differential equation (RDE) in a Banach space driven by $(X^\varepsilon_t)_{t\in[0,T]}$. The convergence of the solution of \eqref{eq:odenoisy} is then an easy consequence of the continuous dependence of the RDE on its driver. We emphasize that---even when $y$ is a strong mixing Markov process so we are in the classical domain---it is advantageous to consider the ODE as an RDE. In fact with the approach presented here, we automatically obtain simultaneous convergence of the solutions with any finite number of initial conditions to the solutions of the same limiting SDE of Kunita type, which is not easy to prove with the usual martingale method. 

To illustrate the mechanism behind the homogenization problem, we first review the following popular perturbation model:
$$\dot z_t^\epsilon =\epsilon g( z_t^\epsilon, y_t)+ f(z_t^\epsilon, y_t),$$
where $f, g: \R^d\times\R^n\to \R^d$ are sufficiently regular with suitable initial conditions. On the scale of $[0, \f  1{\sqrt \epsilon}]$, $z_t^\epsilon$ can be identified with the solution of
the slow/fast system:
$$\dot x_t^\epsilon = g( x_t^\epsilon, y_{\f t \epsilon})+ \f 1 {\sqrt \epsilon} f(x_t^\epsilon, y_{\f t \epsilon}),  \qquad  x_0^\epsilon=x_0, \qquad x_0\in \R^d.$$
The parameter $\epsilon$ is a positive number tuning the relative speed with respect to the fast motion $y$ and is assumed to be small.
If the stochastic process $(y_t)_{t\geq 0}$ exhibits oscillatory features and $f$ averages to zero with respect to a measure determined by it,  the slow motion feels the averaged and the central limit theorem velocity.
It is then plausible to deduce a simpler equation, called the effective equation, whose solutions approximate $(z_t^\epsilon)_{t\geq 0}$ on the scale $[0, \f 1 {\sqrt \epsilon}]$.  This is the principle of slow/fast homogenization. Unlike for $(x_t^\epsilon)_{t\in [0,T]}$, the effective equation for $\lim_{\epsilon\to 0} x_t^\epsilon$ is autonomous. There are two well known theories on this: (1) the fast motion is deterministic, (2) the fast motion is strong mixing Markovian. If the fast motion is periodic this is classic. More recently,  diffusive homogenization  is obtained  for  a class of non-uniformly hyperbolic  fast flows including solutions of the classical Lorenz equations \cite{Kelly-Melbourne}. 
The second theory, in which $(y_t)_{t\geq 0}$ is an ergodic Markov process satisfying a strong mixing condition, is by now also classic \cite{Kesten-Papanicolaou, Freidlin-Wentzell}. 
The theory we intend to study is a fast dynamics with slowly decaying correlations. Such dynamics appear naturally in natural, economical, and social sciences, as well as in statistics, see e.g. \cite{Comte-Renault96, Rio17, Samorodnitsky, Robinson}.

In this article, $(y_t)_{t\geq 0}$ is taken to be an admissible `moving average' Gaussian process. Let $W_t$ be a Brownian motion in $\R^d$,
and $K(r)\in \L(\R^d, \R^n)$  an admissible kernel.
A Volterra process is of the form
\begin{equation}\label{eq:kernel_representation-1} 
y_t=\int_{-\infty}^t K(t-u)dW_u.
\end{equation}
Increments of (fractional) Brownian motions and the fractional Ornstein-Uhlenbeck processes are Volterra processes.   In fact, a centered one-dimensional stationary $L^2$ continuous Gaussian process $(y_t)_{t\geq 0}$ has the integral representation \eqref{eq:kernel_representation-1} if and only if its spectral measure has a density \cite[Prop. 14.20]{Kallenberg3} by simply taking $K$  the Fourier transform of the square root of the density. We will assume an algebraic correlation decay: for some $\beta>0$, $s,t\geq 0$,
$$\big|\Expec{y_s\otimes y_t}\big|
=  \left|\int_\R K(s-u)  K^*(t-u)\,du\right|  \leq \Theta \left( 1 \wedge |t-s|^{- \beta} \right).$$ 
For a stationary Gaussian processes, the  regularity of the spectral measure can be obtained from the decay rate of the correlation function.  In comparison,  `strong mixing',  which can be characterized by the spectral density, cannot be classified by the decay  rate of its correlation function.

Before going ahead explaining how to propose the convergence theorem, we go back to the well known situation in which  $y_t$  is a Markov process or strong mixing and take $g=0$ for simplicity.  Then the ansatz is a diffusion process whose generator can be formally written down and the road map for the convergence of  $x_t^\epsilon$  is  to show that $(x^\epsilon)_{\varepsilon\in(0,1]}$ is relatively compact followed by an application of the martingale problem method.  The paramount question is then to determine for which $L^2$ functions $G$ a functional central limit theorem  holds for $ {\sqrt \epsilon}  \int_0^{t/\epsilon} G(y_{s}) ds$. This is the Kipnis-Varadhan theory \cite{Kipnis-Varadhan} which has been become a cornerstone in studying  both continuous and discrete stochastic models, see \cite{Komorowski-Landim-Olla} and the references therein. We would like to highlight the fruitful rough path approach explored in \cite{Kelly-Melbourne, Gehringer-Li-homo, Deuschel-Orenshtein-Perkowski}.

In case $(y_t)_{t\geq 0}$ is a non-strong mixing Volterra Gaussian noise,  how do we formulate the condition on the driver?
Let us first take $(y_t)_{t\geq 0}$ to be the one-dimensional stationary fractional Ornstein-Uhlenbeck process
and consider  $$\alpha(G, \epsilon) \int_0^{t/\epsilon} G(y_{s}) \,ds.$$
Then the scale $\alpha(G, \epsilon) $ and the scaling limit depend on its Hermit rank, which is the lowest $m$ in the Hermite polynomial expansion of  $G=\sum_{i=m}^\infty c_iH_i$.  Furthermore, the solutions of the equations $\dot x_t^\epsilon=\sum_{i=1}^N\alpha(G_i, \epsilon) f_i(x_t^\epsilon)G_i( y_{t/\epsilon})$ 
converge  \cite{Gehringer-Li-tagged}. For $N=1$, this is straightforward. For $N>1$, this models different driving vector fields across different regions, is already more complex and involves CLT's for the iterated integrals in a topology stronger than the weak topology.  An enhanced CLT is deduced from and built upon the vast existing literature, c.f. \cite{Taqqu, Maejima-Ciprian, Campese-Nourdin-Nualart} under a fast chaos decay condition. The effective equation describes in some cases a diffusion, in other cases an anomalous super diffusion, or turns out to be a stochastic differential equation with mixed It\^o, Lebesgue, and Young integrals. For related work see \cite{Ivanov-Leonenko-Ruiz-Medina-Savich}.

The result we described applies to the product from $f(x,y)=\sum_{i=1}^N f_i(x)G(y)$ and the one-dimensional toy model noise.
The problem remains open for  the evolution in the random field generated by multi-dimensional noise and for the non-product form.
 For a preliminary examination, let us assume that the random field $F(x,t, \epsilon, \omega)=f(x, y_{\f t\epsilon(\omega)})$ is in $L^2$. Taylor expanding $F$ in $x$ in some direction $v$, we see
$$F(x,t, \epsilon, \omega)\sim f(x_0, y_{\f t \epsilon}(\omega))+ \nabla_x f (x_0, y_{\f t \epsilon}(\omega))\cdot  v+\dots.$$
The CLT applies only if $f$ is sufficiently fast oscillatory so the oscillation compensates the insufficient decay in the auto-correlation of $(y_t)_{t\geq 0}$.
 It is more efficient to take the Hermite expansion of $f$ with respect to the Gaussian process $(y_t)_{t\geq 0}$.
Wrapping the Hermite polynomials around a Gaussian process amplifies its correlation decay. We use this to determine the decay rate of the random field.
In case of $\E[y_0\otimes y_0]=\id$, the Hermite rank of $f(x,\cdot)$ is  the lowest $|\bl |$ with non-vanishing $c_{\bl}$ in the following Hermite polynomial expansion:
\begin{equation}\label{eq:hermite_intro}
f(x, y_t)=\sum_{\bl\in\N_0^n}c_{\bl}(x)H_{\bl}(y_t)\qquad \forall\,t\geq 0,
\end{equation}
 where $\bl=(\bl_1, \dots, \bl_n)$ is a multi-index and $c_{\bl}=(c_{\bl}^1, \dots, c_{\bl}^n)\in\R^n$. Otherwise we use a transformed process $(z_t)_{t\geq 0}$ for defining the Hermite rank.
The Hermite rank precisely characterizes whether  the functional CLT holds. We will specify the regularity conditions later. 

\medskip

{\bf Main Results.}
Our main results are presented in  {\bf  \Cref{thm-A}} and   {\bf \Cref{thm:homogenization}}. 
The former is an enhanced and rough functional limit theorem for multi-dimensional Voterra processes: $X^\epsilon\Rightarrow X$ and under more stringent decay conditions on the Hermite coefficients, we also show $(X^\epsilon, \XX^\epsilon)\Rightarrow (X, \XX)$ where $X$ is a Gaussian field.
\Cref{thm:homogenization} is an application to the homogenization problem for \eqref{eq:odenoisy}.
We show that, for $f\in \C_b^3$, the limiting effective equation is an SDE of Kunita type:   
$$
dx_t=\Gamma(x_t) dt+F(x_t, dt),
$$ 
where $F(x,\cdot)\define X(x)$ is a martingale with spatial parameters whose  characteristics are of the form $a_{i,j}(x,z,t)=\sigma_{i,j}(x,z)t$. The advantage of the formulation as a Kunita SDE is that for any finite set of initial positions we have the weak convergence of the $N$-point motion, by which we mean $(\phi^\epsilon(z^1), \dots, \phi^\epsilon(z^N))$, where $\phi^\epsilon$ are the solution flows of the SDE and $z^i\in \R^d$ are initial points. This is due to the convergence of the drivers in the compactly supported case and solution theory of SDEs of Kunita type. These theorems are proved under further conditions on the Hermite rank of $f$.

The proof of  \Cref{thm-A} is the content of Section 3, which relies on computational techniques from Malliavin calculus, and that for \Cref{thm:homogenization} is presented in Section 4. 
For the proof of the convergence of $(x_t^\epsilon)_{t\in[0,T]}$, we take the continuity theorem route for solution flows of rough differential equations (RDE). For this 
we cast the equation \eqref{eq:odenoisy} as a Banach space-valued rough differential equation. 
To put this in the perspective of  diffusion processes, this  is analogous to lifting a smooth stochastic differential equation to the diffeomorphism group. The rough differential equation method has previously been employed in the very nice work of Kelly and Melbourne \cite{Kelly-Melbourne}, in a different context. We also resolve a question raised in \cite{Kelly-Melbourne} in \cref{prop:weak_convergence} below, with which we also bypass the invocation of the martingale problem method used there, for identifying the limit equation. We then  turn this problem back to a finite state problem, and interpret the resulting RDE as classical stochastic differential equations of Kunita type and identify the characteristics of the driving semi-martingales with spatial parameters.   Note that we have kept the infinite-dimensional noise in the Kunita type SDE,  for we would lose the simultaneous convergence if it is converted to an SDE driven by a finite-dimensional noise. 

\paragraph{Acknowledgements.} We acknowledge helpful comments from an anonymous referee.

\section{Preliminaries}\label{sec-preliminaries}

This section features a brief overview of background material and a preliminary treatment of the noise. 

\subsection{The Normalized Noise Process}\label{sec:volterra}
\begin{definition}\label{def:volterra}
	An $\R^n$-valued stochastic process $(y_t)_{t\geq 0}$ is called a \textit{(stationary) Volterra process} if there is an integer $d\in\N$, a square-integrable kernel $K:\R\to\Lin[d]{n}$ with $K(t)=0$ for $t<0$, and a $d$-dimensional Wiener process $(W_t)_{t\in\R}$ such that
	\begin{equation}\label{eq:kernel}
	(y_t)_{t\geq 0}\overset{d}{=}\left(\int_\R K(t-s)\,dW_s\right)_{t\geq 0}.
	\end{equation}
	Let $\beta,\Theta>0$. We write $\vol_n(\beta,\Theta)$ for the space of $n$-dimensional Volterra processes which satisfy
	\begin{equation}\label{eq:decay_estimate_tail}
	\left|\int_{-\infty}^{-t}\Braket{K_j(-u), K_j(-u)}\,du\right|\leq \Theta \big(1 \wedge t^{- 2 \beta}\big)\qquad\forall\,t\geq 0,\,j=1,\dots,n,
	\end{equation}
	where $K_j$ is the $j^{\text{th}}$ row of $K$. It is also convenient to declare $\vol_n\define\bigcup_{\beta,\Theta>0}\vol_n(\beta,\Theta)$.
\end{definition}

It is clear that any $y\in\vol_n$ is a centered, stationary Gaussian process, which is actually ergodic under the canonical time-shift \cite{Cornfeld-Fomin-Sinai}. We set $\Sigma\define\Expec{y_0\otimes y_0}$ throughout the article.

By a simple application of the Cauchy-Schwarz inequality, the estimate \eqref{eq:decay_estimate_tail} yields
\begin{equation*}
	\left|\int_\R\Braket{K_i(s-u), K_j(t-u)}\,du\right|\leq \Theta \left( 1 \wedge |t-s|^{- \beta} \right)\qquad\forall\,s,t\geq 0,\,i,j=1,\dots,n.
\end{equation*}
This in turn implies the following decay of the temporal correlations of $(y_t)_{t\geq 0}$
\begin{equation*}
  \big|\Expec{y_s\otimes y_t}\big|\leq \Theta \left( 1\wedge|t-s|^{-\beta}\right).
\end{equation*}
 Let us give a few examples comprised by \cref{def:volterra}:
\begin{example}
  \leavevmode
  \begin{enumerate}
	\item Given a fractional Brownian motion $B$ with Hurst parameter $H \in (0,1)\setminus\{\frac12\}$, one can show that for functions $f$ satisfying 
   \begin{equation*}
   \begin{cases}\int_{\R} \left( \int_{\R} f(u) (u-s)_{+}^{H- \f 3 2} \,du \right)^2\, ds<\infty & H>\frac12,\\[10pt]
   \exists \phi_f \in L^2(\R): f(s) = \int_{\R} \phi_f(u) (u-s)_{+}^{-H- \f 1 2}\,du & H<\frac12,
   \end{cases}
   \end{equation*}
   the following identity holds
  \begin{equation*}
	 \int_{\R} f(s) dB_s = c_H\begin{cases} \displaystyle\int_{\R} \int_{\R}f(u) (u-s)_{+}^{H- \f 3 2} \,du\,dW_s, & H>\frac12,\\[10pt]
	  \displaystyle\int_{\R}  \int_0^{\infty} \f{ f(s) - f(s+u)} {u^{\f 3 2 - H}}\,du \,dW_s, & H<\frac12,
  \end{cases}
  \end{equation*} 
  see \cite{Pipiras-Taqqu}. Here, $c_H>0$ is some explicitly known constant. Hence, the fractional Ornstein-Uhlenbeck process $dX_t=-X_t\,dt+dB_t$ \cite{Cheridito2003} is a Volterra process with kernel
  \begin{equation*}
	K(t) = c_H\begin{cases}
	\displaystyle\int_{0}^\infty e^{-u} (u-t)_{+}^{H- \f 3 2}\,du, & H>\frac12,\\[10pt]
	\displaystyle e^{-t} \int_{0}^{\infty}  \f {\1_{[0,\infty]}(t) -  e^{-u} \1_{[-\infty,t]}(u)} {u^{\f 3 2 -H} }\,du, & H<\frac12.
	\end{cases}
  \end{equation*}
  It is not hard to check that \eqref{eq:decay_estimate_tail} holds with $\beta=1-H$.
  \item Another example is
  $$ K(t) =  c_H\begin{cases}
	  0 &\text{ if } t<0,\\
	  t^{H- \f 1 2} &\text{ if } 0 \leq t \leq 1,\\
	  t^{H- \f 1 2} - (t-1)^{H-\f 1 2} &\text{ if } t \geq 1,
  \end{cases}\qquad H\in(0,1),
  $$
  which leads (for a suitable choice of the normalization constant $c_H\in\R$) to a fractional Brownian increment $\int_{\R} K(t-s)\,dW_s = B_t - B_{t-1}$ \cite{Mandelbrot1968}. It is again easy to see that \eqref{eq:decay_estimate_tail} holds with $\beta = 1-H$.
  \end{enumerate}
\end{example}

\begin{fact}\label{fact:diagonalization}
  Let $\Sigma\in\Lin{n}$ be a positive semi-definite matrix with rank $m$. Then there is an isometry $O\in\Lin[m]{n}$ such that $D\define O^{\top} \Sigma O$ is diagonal and features precisely the non-zero eigenvalues of $\Sigma$. Let $G$ be a real-valued function on $\R^n$. Then 
  \begin{equation*}
	G\in L^2\big(\R^n,N(0,\Sigma)\big)\quad\Longleftrightarrow\quad G\big(OD^{\frac12}\cdot\big)\in L^2\big(\R^m,N(0,\id)\big).
  \end{equation*}
\end{fact}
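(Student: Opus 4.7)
The plan is to use the spectral theorem to produce the desired isometry $O$ explicitly, and then to identify the pushforward $(OD^{\frac12})_\ast N(0,\id)$ with $N(0,\Sigma)$, so that the $L^2$ equivalence reduces to the standard change of variables formula.

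First I would diagonalize $\Sigma$ orthogonally: by the spectral theorem there exists an orthogonal $U\in\Lin{n}$ and a diagonal matrix $\Lambda=\mathrm{diag}(\lambda_1,\dots,\lambda_n)$ with $\Sigma=U\Lambda U^\top$. Since $\Sigma$ is positive semi-definite with rank $m$, exactly $m$ of the $\lambda_i$ are strictly positive while the remaining $n-m$ vanish. Let $O\in\Lin[m]{n}$ be the matrix whose columns are those columns of $U$ indexed by the nonzero eigenvalues. Then $O^\top O=\id$ (so $O$ is an isometry from $\R^m$ to $\R^n$), and $D\define O^\top\Sigma O$ is the $m\times m$ diagonal matrix carrying precisely the nonzero eigenvalues of $\Sigma$. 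Splitting $U$ into the columns forming $O$ and those annihilated by $\Lambda$, one also reads off the dual identity $\Sigma=ODO^\top$.

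Next I would verify that the map $OD^{\frac12}\colon\R^m\to\R^n$ pushes forward the standard Gaussian on $\R^m$ to $N(0,\Sigma)$: if $Z\sim N(0,\id)$ then $OD^{\frac12}Z$ is centered Gaussian with covariance $OD^{\frac12}\big(OD^{\frac12}\big)^\top=ODO^\top=\Sigma$, hence $(OD^{\frac12})_\ast N(0,\id)=N(0,\Sigma)$. The standard change of variables now gives
\[
\int_{\R^n}|G(y)|^2\,N(0,\Sigma)(dy)=\int_{\R^m}\big|G\big(OD^{\frac12}z\big)\big|^2\,N(0,\id)(dz),
\]
from which the claimed equivalence of $L^2$-membership is immediate.

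I do not anticipate any serious obstacle: the statement is essentially a bookkeeping reformulation of the spectral theorem together with the observation that a degenerate centered Gaussian on $\R^n$ of rank $m$ is the image under a rescaled isometric embedding of a non-degenerate Gaussian on $\R^m$. The only mild subtlety is that $N(0,\Sigma)$ is supported on $\mathrm{Im}(O)$, so the values of $G$ off this $m$-dimensional subspace are irrelevant for $L^2$-membership; the change of variables above makes this automatic.
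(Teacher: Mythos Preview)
The paper states this as a \emph{Fact} and gives no proof at all; it is treated as elementary linear algebra plus the pushforward description of a degenerate Gaussian, and is simply used thereafter. Your argument is correct and supplies exactly the details the paper omits: the spectral decomposition yields the isometry $O$ and the identity $\Sigma=ODO^\top$, from which $(OD^{1/2})_\ast N(0,\id)=N(0,\Sigma)$ follows, and the $L^2$ equivalence is then the change-of-variables identity for the pushforward. There is nothing to compare against beyond noting that your write-up is more explicit than the paper, which leaves the statement unproven.
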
 

Let $(y_t)_{t\geq 0}$ be a centered, stationary Gaussian process and recall that $\Sigma=\Expec{y_0\otimes y_0}$. There is no loss of generality in assuming that $\rank(\Sigma)=n$ for otherwise $y$ lives almost surely in a proper subspace of $\R^n$. Without further notice, we shall resort to this case in the sequel. Let $D$ and $O$ denote the matrices furnished by \cref{fact:diagonalization}. For any $G\in L^2\big(\R^n,N(0,\Sigma)\big)$, we have the following $L^2\big(\R^n,N(0,\id)\big)$ convergent expansion:
\begin{equation*}
  G\big(OD^{\frac12}y\big) = \sum_{\bl\in \N^n_0} c_{\bl} H_{\bl}(y),\qquad\text{where}\quad c_{\bl}\define\int_{\R^n} G\big(OD^{\frac12}y\big) H_{\bl}(y)\,N(0,\id)(dy).
\end{equation*}
Here, $H_{\bl}:\R^n\to\R$ denotes the \textit{Hermite polynomial} of degree $\bl$:
\begin{equation*}
  H_{\bl}(x)\define\prod_{i=1}^n H_{\bl_i}(x_i),\qquad x=(x_1,\dots,x_n)\in\R^n,
\end{equation*} 
where
\begin{equation*}
  H_m(x) \define (-1)^m e^{\f {x^2} {2}} \f {d^m} {dx^m} e^{-\f {x^2} {2}},\qquad x\in\R,\,m\in\N_0.
\end{equation*}
Note that $H_0(x)=1$, $H_1(x)=x$ and $\< H_m , H_n \>_{L^2(\R,N(0,1))} =  \delta_{m,n} m!$. We want to remark that in \cite{Nualart} the Hermite polynomials are defined with a different normalization.

 Let us introduce the \textit{normalized process} $(z_t)_{t\geq 0}$ by declaring 
 \begin{equation}\label{eq:normalized_process}
   z_t\define D^{-\frac12}O^\top y_t.
 \end{equation} 
 Then $(z_t)_{t\geq 0}$ is clearly a centered, stationary Gaussian process with $\Expec{z_0\otimes z_0}=\id$ and 
\begin{equation}\label{eq:hermite_expansion}
  G(y_t)=G\big(O D^{\f 1 2} z_t\big) = \sum_{\bl\in \N^{n}_0} c_{\bl} H_{\bl}(z_t),\qquad t\geq 0.
\end{equation}
Note that, by definition, 
\begin{equation}\label{eq:transformed_kernel}
  z_t = \int_{\R} \hat{K}(t-u) dW_u,\qquad \hat{K}(t)\define D^{-\f 1 2} O^{\top}K(t),
\end{equation}
and $z\in\vol_n(\beta,\hat\Theta)$ where
$\hat{\Theta}=\big|D^{-\frac12}O^{\top}\big|^2\Theta$.

\begin{definition}\label{def-Hermite-rank} 
	Let $G\in L^2\big(\R^n,N(0,\Sigma)\big)$ and consider the expansion \eqref{eq:hermite_expansion}. 
	\begin{enumerate}
		\item The \textit{Hermite rank} of $G$ (with respect to $y$) is defined by
		\begin{equation*}
		\herm(G)\define\inf\big\{|\bl|:\,c_{\bl}\neq 0\big\}.
		\end{equation*}
		\item\label{it:fast_decay} We say that $G$ satisfies the \textit{fast chaos decay condition} with parameter $p>1$ if
		\begin{equation*}
		\sum_{\bl\in\N_0^n}|c_{\bl}| (p-1)^{\frac{|\bl|}{2}} \sqrt{\bl!} < \infty.
		\end{equation*}
	\end{enumerate}
\end{definition}
\begin{remark}
Denoting the  Ornstein-Uhlenbeck operator by
$$ T_\theta G = \sum_{\bl \in \N_0^n} c_{\bl} e^{-\theta |\bl|} H_{\bl},$$
we have  $\D(T_{\theta}) = \{ G \in L^2(\R^n,N(0,\id)) : \sum_{\bl\in\N_0^n}|c_{\bl}|^2 e^{-2\theta |\bl|} \bl!<\infty \}. $ Thus, by an application of Cauchy-Schwarz we obtain that a function $G$ satisfies the fast chaos decay condition with parameter $p$ if for some $\delta >0$, $G  \in \D(T_{-\f 1 2 \ln(p-1) - \delta})$.
\end{remark}
\begin{example}
		  The generating functions
	\begin{equation*}
	e^{\braket{x,a}-\frac{|a|^2}{2}}=\sum_{\bl\in\N_0^n}\frac{a^{\bl}}{\bl!}H_{\bl}(x),\qquad a,x\in\R^n,
	\end{equation*}
satisfy the fast chaos decay condition.
\end{example}

\subsection{Malliavin Calculus}\label{sec:malliavin}

In this section we recall some concepts from Malliavin calculus. For details we refer to \cite{Nualart}.

Let $ \H=L^2(\R, \R^d)$, w.r.t. the Lebesgue measure. If $W_t=(W^1_t, \dots, W^d_t)$ is a two-sided Wiener process, we construct an {\it isonormal Gaussian process}  $\{W(h): h \in \H\}$ by It\^o-Wiener integrals
$$W(h)\define \int_\R \<h(t), dW_t\>,$$
where $h$ is identified with an $L^2$ function from $\R$ to $\R^d$. If $f \in \H^{\otimes m}\cong L^2(\R^m, \R^{d^m})$,  it corresponds to a family of functions  $f_{i_1, \dots, i_m}: \R^m\to \R$ where $i_j \in \{1, \dots, d\}$. We denote by $f^\Sym$ its symmetrization.
We  define the multiple Wiener integral with respect to $W$ as follows:
$$\begin{aligned} I \colon& \bigoplus_{m \ge 0} \H^{\otimes m} \to L^2(\Omega; \R), \qquad \qquad \hbox{for } f\in  \H^{\otimes m},\\
I(f) \define  & m! \sum_{i_1, \dots, i_m =1}^d \int_{-\infty}^{\infty}\! \int_{-\infty}^{t_{m-1}}\! \dots \!\int_{-\infty}^{t_2} 
f^\Sym_{i_1, \dots, i_m} (t_1, \dots, t_m) dW^{i_1}_{t_1}\cdots dW^{i_m}_{t_m}.
\end{aligned}$$
Given $f=(f_1,\dots,f_n)\in\H^n$ and a multi-index $\bl=(\bl_1, \dots, \bl_n)\in\N_0^n$, we define  $f^{\otimes\bl}\in\H^{\otimes|\bl|}$ by
\begin{equation}
f^{\otimes\bl}\define f_1^{\otimes\bl_1}\otimes\cdots\otimes f_n^{\otimes\bl_n}.
\end{equation} 
To keep the verification of the fourth moment theorem simple we introduce contraction
operators between functions that are not necessarily symmetric.
Let $S$ and $\tilde S$ be two index sets, $f=\otimes_{i\in S} f_i$ and $g=\otimes_{i\in \tilde S} g_i$ are primitive tensors with $f_i, g_i\in \H$. For any pair $p=\{a,b\}$ where 
$a\in S$ and $b\in \tilde S$ we define the contraction of $f\otimes g$:
\begin{equation}
\Tr_p(f\otimes g)=\<f_a, g_b\> \; \bigg( \bigotimes_{i\in S\setminus\{a\}} f_i \bigg) \otimes\bigg( \bigotimes_{i\in \tilde S\setminus\{b\}} g_i\bigg).
\end{equation}
We define the multi-contraction with multi-pairs of $k$-elements. Let $P=\{(a_i,b_i)\}_{i=1}^k$ be a set of complete pairings, where $\{a_i\}_{i=1}^k\subset S$ and $\{b_i\}_{i=1}^k\subset\bar S$. We set
\begin{equation}
\Tr_P(f\otimes g)=\prod_{i=1}^k \<f_{a_i}, g_{b_i}\> \; \bigg( \bigotimes_{i\in S\setminus\{a_1,\dots,a_k\}} f_i \bigg) \otimes\bigg( \bigotimes_{i\in \tilde S\setminus\{b_1,\dots,b_k\}} g_i\bigg).
\end{equation}
Then the following product formula holds for $f\in \H^{\otimes m} $ and $g \in \H^{\otimes n} $:
\begin{equation}\label{product-formula}
I(f)I(g) =\delta_{n,m} \sum_{P\in {\mathcal P} } I( \Tr_P  f\otimes g),      \qquad {\mathcal P}=\bigcup_{k=0}^n { \mathcal P}_k,
\end{equation}
where $P$ runs through all multi-pairs from  $S=\{1, \dots, m\}$ and $\tilde S=\{1, \dots, n\}$ so ${\mathcal P}=\bigcup_{k=0}^{m\wedge n}{\mathcal P}_k$, where   ${\mathcal P}_k$ denotes the collection of all 
$k$ distinct pairs of indices from $S\times \tilde S$. The $0^{\textup{th}}$ contraction is $f\otimes g$.  In particular,
\begin{equation}\label{eq:isometry}
\E \left[ I(f) I(g) \right] =  \delta_{m,n} m! \Braket{\Sym(f), \Sym(g)}_{\H^{\otimes m}}.
\end{equation}

We have the following straight-forward generalization of \cite[Proposition 1.1.4]{Nualart}:
\begin{lemma}\label{lem-Hermite-to-st-integral}
  Let $\bl\in\N_0^n$ and $f \in\H^n$ with $\|f_i\|_{\H} =1$ for each $i=1,\dots,n$. Then we have
  \begin{equation*}
	H_{\bl}\big(W(f)\big)=I(f).  
  \end{equation*} 
\end{lemma}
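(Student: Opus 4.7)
The plan is to reduce the multi-index statement to the scalar identity $H_m(W(h))=I(h^{\otimes m})$ for $\|h\|_\H=1$ (the content of \cite[Prop.~1.1.4]{Nualart}) and then combine the factors by repeated use of the product formula \eqref{product-formula}. The right-hand side $I(f)$ in the statement is to be read as $I(f^{\otimes\bl})=I(f_1^{\otimes\bl_1}\otimes\cdots\otimes f_n^{\otimes\bl_n})$ in the notation introduced just above the lemma. Moreover, for the identity to hold the $f_i$'s must be orthonormal, $\Braket{f_i,f_j}_\H=\delta_{ij}$ -- otherwise already for $\bl=(1,1)$ the product formula yields $H_\bl(W(f))=W(f_1)W(f_2)=I(f_1\otimes f_2)+\Braket{f_1,f_2}_\H$. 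This orthogonality is no real restriction in the paper, since it is built into the construction of the normalized process $z$ in \eqref{eq:normalized_process}, for which $\E[z_0\otimes z_0]=\id$.

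With these conventions in place, I would proceed in two short steps. First, using $H_\bl(x)=\prod_{i=1}^n H_{\bl_i}(x_i)$ and applying the scalar case coordinatewise (permissible since $\|f_i\|_\H=1$),
\begin{equation*}
H_\bl\bigl(W(f)\bigr)=\prod_{i=1}^n H_{\bl_i}\bigl(W(f_i)\bigr)=\prod_{i=1}^n I\bigl(f_i^{\otimes\bl_i}\bigr).
\end{equation*}
Second, I would expand this product iteratively via \eqref{product-formula}: any nonempty pairing between a slot of $f_i^{\otimes\bl_i}$ and a slot of $f_j^{\otimes\bl_j}$ with $i\neq j$ carries the weight $\Braket{f_i,f_j}_\H=0$ and therefore vanishes, so at each merge only the zeroth (empty-pairing) contraction $I(f_i^{\otimes\bl_i}\otimes f_j^{\otimes\bl_j})$ survives. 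Iterating across $i=1,\dots,n$ collapses the product into the single multiple Wiener integral $I\bigl(f_1^{\otimes\bl_1}\otimes\cdots\otimes f_n^{\otimes\bl_n}\bigr)=I(f^{\otimes\bl})$, which is the claim.

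The only substantive point is combinatorial bookkeeping in the product formula, namely verifying that every nonempty mixed pairing is annihilated by orthogonality, while the within-$i$ contractions are already absorbed into the scalar identity taken as input. Once this is observed, no analytic machinery beyond \eqref{product-formula} is needed, consistent with the paper's billing of the statement as a straightforward generalization of \cite[Prop.~1.1.4]{Nualart}.
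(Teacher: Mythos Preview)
Your argument is correct. The paper does not actually provide a proof of this lemma---it is stated without proof as a ``straight-forward generalization'' of \cite[Prop.~1.1.4]{Nualart}---so there is nothing to compare against directly. Your reduction to the scalar identity $H_m(W(h))=I(h^{\otimes m})$ followed by iterated use of the product formula \eqref{product-formula}, with orthogonality annihilating all mixed contractions, is exactly the intended route.

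Your observation that orthonormality $\Braket{f_i,f_j}_\H=\delta_{ij}$ (and not merely $\|f_i\|_\H=1$) is required is also correct and is a genuine omission from the stated hypotheses; your $\bl=(1,1)$ counterexample demonstrates this. The paper only applies the lemma in \cref{lem-decomposition} to the functions $\tau_t\hat K^i$, where this orthogonality is verified explicitly from $\E[z_0\otimes z_0]=\id$, so the gap is cosmetic rather than substantive for the paper's purposes.
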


Given $ \bl \in \N^n_0$ we call a graph of complete pairings,  without self-loops and with nodes $\{1, \dots, n\}$ \textit{$\bl$-admissible} if the $k^{\text{th}}$ node has exactly $\bl_k$ edges. We denote the collection of all such graphs by $\Gamma_{\bl}$. We will use the fact
\begin{equation}\label{eq:number_graphs}
   \vert \Gamma_{\bl} \vert \leq \sqrt{\bl!} (n-1)^{\f {\vert \bl \vert}{2}}.
 \end{equation} 
For a graph $\G\in\Gamma_{\bl}$, we write $\gamma_{i,j}(\G)$ for the number of edges between the nodes $i$ and $j$.

\begin{proposition}[Diagram Formula \cite{BenHariz, Graphsnumber}]\label{thm-diagramm-formulae}
Let $\bl\in\N_0^n$ and $X=(X_1,\dots,X_n)$ be multivariate Gaussian. Then
$$ \Expec{H_{\bl}(X)}= \sum_{\G\in \Gamma_{\bl}} \prod_{i=1}^n \prod_{j=1}^i \Expec{X_i X_j}^{\gamma_{i,j}(\G)}.$$
In particular, if $\bk \in \N^n_0$, $Y=\left( Y_1, \dots, Y_n \right)$ is multivariate Gaussian jointly with $X$, and both $X$ and $Y$ have pairwise independent components, then we have 
\begin{equation*}
  \big|\Expec{H_{\bl}(X) H_{\bk}(Y)}\big| \lesssim \delta_{| \bk |, | \bl |} \sqrt{\bk! \bl!}(2n-1)^{|\bl| }   \max_{1 \leq i\leq j \leq n} \big|\Expec{X_i Y_j}\big|^{|\bl|}.
\end{equation*}
\end{proposition}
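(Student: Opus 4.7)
The plan is to prove Part 1 via a generating-function argument and then deduce Part 2 by applying Part 1 to the concatenated Gaussian vector $(X,Y)\in\R^{2n}$. Since self-loops are excluded in $\Gamma_{\bl}$, the identity in Part 1 really pertains to a multivariate Gaussian with unit variance marginals (the relevant case, via \eqref{eq:normalized_process}, in which the normalized process has identity covariance), so I work under this normalization throughout. The starting point is the Hermite generating function
\begin{equation*}
\prod_{i=1}^n e^{t_i X_i - t_i^2/2} = \sum_{\bl\in\N_0^n}\frac{t^{\bl}}{\bl!}H_{\bl}(X),
\end{equation*}
whose expectation, combined with the Gaussian moment generating function and $\E[X_i^2]=1$, simplifies to $\exp\bigl(\sum_{i<j} t_i t_j \E[X_iX_j]\bigr)$.

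Next I would expand the exponential as $\prod_{i<j}\sum_{k_{ij}\ge 0}(t_it_j\E[X_iX_j])^{k_{ij}}/k_{ij}!$, collect the coefficient of $t^{\bl}/\bl!$ (which enforces $\sum_{j\ne i}k_{ij}=\bl_i$), and identify the resulting factor with a count of admissible graphs. The key combinatorial observation is that the number of graphs in $\Gamma_{\bl}$ with exactly $k_{ij}$ edges between nodes $i$ and $j$ equals $\bl!\prod_{i<j}(k_{ij}!)^{-1}$; this is verified by first partitioning the $\bl_i$ half-edges at each node according to their target node (contributing $\bl_i!/\prod_j k_{ij}!$ choices per node) and then matching the $k_{ij}$ half-edges on the two sides of each bond ($k_{ij}!$ matchings per pair $i<j$), followed by cancelling the double counting from the symmetry $k_{ij}=k_{ji}$. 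Multiplying by $\prod_{i<j}\E[X_iX_j]^{k_{ij}}$ and summing over all admissible $(k_{ij})$ reproduces $\sum_{\G\in\Gamma_{\bl}}\prod_{i}\prod_{j\le i}\E[X_iX_j]^{\gamma_{i,j}(\G)}$, yielding Part 1.

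For Part 2, I apply Part 1 to the $2n$-dimensional Gaussian $(X,Y)$ with the concatenated multi-index $(\bl,\bk)\in\N_0^{2n}$. Pairwise independence within $X$ and within $Y$ forces $\E[X_iX_j]=\E[Y_iY_j]=0$ for $i\ne j$, so only graphs whose every edge connects an $X$-node to a $Y$-node survive. Such bipartite graphs can exist only when the $X$- and $Y$-sides have equally many half-edges, i.e.\ $|\bl|=|\bk|$, which accounts for the factor $\delta_{|\bl|,|\bk|}$. On this event each of the $|\bl|$ edges contributes one factor $\E[X_iY_j]$, so the absolute value of any term is bounded by $\max_{i,j}|\E[X_iY_j]|^{|\bl|}$; the number of contributing graphs is at most $|\Gamma_{(\bl,\bk)}|$, and invoking \eqref{eq:number_graphs} for the $2n$-node multi-index (so that $|(\bl,\bk)|=2|\bl|$ and $(\bl,\bk)!=\bl!\,\bk!$) yields the bound $\sqrt{\bl!\,\bk!}\,(2n-1)^{|\bl|}$, completing the proof.

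The main bookkeeping hurdle is the combinatorial identification in the middle paragraph, specifically keeping track of the symmetry $k_{ij}=k_{ji}$ and the cancellations it produces; beyond that everything reduces to formal power series manipulations and elementary graph counting, with no analytic subtleties.
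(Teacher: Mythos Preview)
The paper does not provide its own proof of this proposition; it is quoted from the references \cite{BenHariz, Graphsnumber} and stated without argument. So there is nothing to compare against on the paper's side.

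That said, your sketch is a correct and standard derivation. The generating-function computation in Part~1 is right, and the combinatorial identification---that the number of graphs in $\Gamma_{\bl}$ with prescribed edge multiplicities $(k_{ij})_{i<j}$ equals $\bl!\prod_{i<j}(k_{ij}!)^{-1}$---is verified exactly as you describe (multinomial allocation of half-edges at each node followed by $k_{ij}!$ matchings per pair, with the $k_{ij}=k_{ji}$ symmetry producing the square in the denominator that then cancels). Your observation that the stated formula, with self-loops excluded from $\Gamma_{\bl}$, tacitly requires unit-variance marginals is a good catch: without that normalization the generating function retains the factor $\exp\bigl(\sum_i t_i^2(\sigma_i^2-1)/2\bigr)$, which would manifest as self-loop contributions. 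This is indeed the only regime used in the paper (via the normalized process $z$). For Part~2, applying Part~1 to $(X,Y)\in\R^{2n}$, discarding all non-bipartite graphs by pairwise independence, and bounding the number of surviving graphs by \eqref{eq:number_graphs} with $2n$ nodes is exactly the intended route, and your arithmetic $|(\bl,\bk)|=2|\bl|$, $(\bl,\bk)!=\bl!\,\bk!$ is correct. One small point: the paper writes $\max_{1\le i\le j\le n}$ in the final bound, whereas your argument naturally yields $\max_{1\le i,j\le n}$; the latter is the correct quantity for a general bipartite pairing and dominates the former, so your bound is valid (and the restriction in the paper's display is presumably a typo or relies on a symmetry not stated).
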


\subsection{H\"older Spaces and Their Tensor Product}\label{sec:tensor_product}

The algebraic tensor product $\CX\otimes_a\CY$ of two vector spaces $\CX$ and $\CY$ is defined as the subspace of the dual of the bilinear mappings $\CX\times\CY\to\R$ spanned by the elements $x\otimes y$, $x\in\CX$, $y\in\CY$, which act by
\begin{equation*}
  \braket{B,x\otimes y}\define B(x,y).
\end{equation*}
We can also declare a dual action on $\CX\otimes_a\CY$ by $\braket{x\otimes y,x^*\otimes y^*}\define \braket{x,x^*}\braket{y,y^*}$ for $x^*,y^*$ in the dual spaces of $\CX$ and $\CY$, respectively.

If $\CX$ and $\CY$ are Banach spaces, we call a norm $\|\cdot\|_{\CX\otimes\CY}$ on $\CX\otimes_a\CY$ a \textit{reasonable crossnorm} if
\begin{alignat*}{4}
  \|x\otimes y\|_{\CX\otimes\CY}&=\|x\|_{\CX}\|y\|_{\CY}&\qquad&\forall\,x\in\CX,\,y\in\CY,\\
  \|x^*\otimes y^*\|_{(\CX\otimes\CY)^*}&\define\sup_{z\in\CX\otimes_a\CY}\frac{|\braket{z,x^*\otimes y^*}|}{\|z\|_{\CX\otimes\CY}}=\|x^*\|_{\CX^*}\|y^*\|_{\CY^*}&\qquad&\forall\,x^*\in\CX^*,\,y^*\in\CY^*.
\end{alignat*}
It is easy to see that both canonical examples, the injective and the projective tensor norm, define reasonable cross norms. Finally, the tensor product of $\CX$ and $\CY$ is defined as the completion of $\CX\otimes_a\CY$ with respect to the norm $\|\cdot\|_{\CX\otimes\CY}$. Without further notice, we always assume that $\CX\otimes\CY$ is equipped with a reasonable crossnorm. Further details on the tensor product of Banach spaces can be found in the classical monographs \cite{Light1985,Ryan2002}.

We now turn to the tensor product of interest in the sequel of this work. Let $\alpha>0$ and $\CX$ be a normed space. The classical H\"older space $\C^{\alpha}_b(\R^d,\CX)$ is defined as the collection
\begin{align}
  \C^{\alpha}_b(\R^d,\CX)&\define\big\{f\in\C^{\lfloor\alpha\rfloor}_{b}(\R^d,\CX):\,|f|_{\C^{\alpha}_b}<\infty\big\},\nonumber
  \end{align}
  with the  norm
  \begin{align}
  |f|_{\C^{\alpha}_b}\define \sup_{\substack{\bl\in\N_0^d\\|\bl|\leq\lfloor\alpha\rfloor}}&\sup_{x\in\R^d}|D^{\bl}f(x)|_{\CX}
  + \sup_{\substack{\bl\in\N_0^d\\|\bl|=\lfloor\alpha\rfloor}}
  \sup_{x\neq y}\frac{|D^{\bl}f(x)-D^{\bl}f(y)|_\CX}{|x-y|^{\alpha-\lfloor\alpha\rfloor}}. \label{eq:holder_norm}
\end{align}
We also write $f\in\C^{\alpha+}_b(\R^d,\CX)$ if there is an $\alpha^\prime>\alpha$ such that $f\in\C^{\alpha^\prime}_b(\R^d,\CX)$.

We will use the following result. It was shown in \cite[Cor. 4.6]{Kelly-Melbourne} for a norm equivalent to \eqref{eq:holder_norm}. 
\begin{lemma} \label{lem:tensor_product}
The canonical embedding $$
\imath: \C_b^{\alpha}(\R^d,\R^d)\otimes\C_b^{\alpha}(\R^d,\R^d) \hookrightarrow \C_b^{\alpha}\big(\R^d,\C_b^{\alpha}(\R^d,\R^d\otimes\R^d)\big),$$
 \begin{equation*}
	\imath(f\otimes g)(x,y)\define f(x)\otimes g(y)\qquad\forall\,f,g\in\C_b^{\alpha}(\R^d,\R^d),\,x,y\in\R^d,
  \end{equation*}
extending to the whole space  by linearity,   defines a reasonable cross norm  on the former, with respect to which the embedding is continuous.
\end{lemma}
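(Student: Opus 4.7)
The plan is to define the norm on the algebraic tensor product by pullback through $\imath$, namely $\|u\|_{\imath}\define |\imath(u)|_{\C^{\alpha}_b(\R^d,\C^{\alpha}_b)}$, so that $\imath$ becomes a tautological isometry and extends by density to the completion. To show that $\|\cdot\|_{\imath}$ is a reasonable cross norm on $\C^{\alpha}_b(\R^d,\R^d)\otimes_a\C^{\alpha}_b(\R^d,\R^d)$, I would establish the classical sandwich $\|u\|_{\varepsilon}\leq\|u\|_{\imath}\leq\|u\|_{\pi}$ between the injective and projective tensor norms; this sandwich is well-known to be equivalent to the two cross-norm identities of the paper's definition, since $\|\cdot\|_\varepsilon$ and $\|\cdot\|_\pi$ are themselves reasonable cross norms forcing equality on simple tensors and on their duals.

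For the upper bound, note that $\imath(f\otimes g)(x,y)=f(x)\otimes g(y)$ separates multiplicatively in the two variables, so every derivative $D^{\bm}_x D^{\bl}_y$ and every Hölder difference quotient appearing in \eqref{eq:holder_norm} factorizes accordingly. Combined with the reasonable cross-norm property on $\R^d\otimes\R^d$, a direct computation gives $|\imath(f\otimes g)|_{\C^{\alpha}_b(\C^{\alpha}_b)}=|f|_{\C^{\alpha}_b}|g|_{\C^{\alpha}_b}$. The triangle inequality then yields $\|u\|_{\imath}\leq\sum_i|f_i|_{\C^{\alpha}_b}|g_i|_{\C^{\alpha}_b}$ for any representation $u=\sum_i f_i\otimes g_i$, and taking the infimum over representations gives $\|u\|_{\imath}\leq\|u\|_\pi$.

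For the lower bound, given unit-norm functionals $\phi,\psi\in(\C^{\alpha}_b(\R^d,\R^d))^*$, I would construct a bounded linear operator
\begin{equation*}
T_\psi\colon \C^{\alpha}_b\big(\R^d,\C^{\alpha}_b(\R^d,\R^d\otimes\R^d)\big)\to\C^{\alpha}_b(\R^d,\R^d)
\end{equation*}
of operator norm at most $\|\psi\|$, obtained by applying $\psi$ in the inner variable componentwise along the second $\R^d$-factor of the $\R^d\otimes\R^d$-valued target: on a simple-tensor input $F(x)(y)=h(x)\otimes m(y)$ it maps $F$ to $x\mapsto h(x)\psi(m)$, and extends to general $F$ by linearity and density. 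Since $\psi$ acts in the $y$-variable, it commutes with the $x$-derivatives and Hölder difference quotients, yielding the operator-norm bound directly from the definition of the outer-inner Hölder norm. By construction $T_\psi(\imath(f\otimes g))(x)=f(x)\psi(g)$, so for a general element $u=\sum_i f_i\otimes g_i$,
\begin{equation*}
|(\phi\otimes\psi)(u)|=|\phi\big(T_\psi(\imath(u))\big)|\leq\|\phi\|\,\|\psi\|\,\|u\|_{\imath},
\end{equation*}
and taking the supremum over unit-norm $\phi,\psi$ furnishes $\|u\|_{\varepsilon}\leq\|u\|_{\imath}$.

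The main obstacle is the construction of $T_\psi$ with the correct operator norm: extending the scalar functional $\psi\in(\C^{\alpha}_b(\R^d,\R^d))^*$ to a contraction on $\R^d\otimes\R^d$-valued Hölder functions requires a careful choice of compatible cross norms and a componentwise bookkeeping along the second $\R^d$-factor, though once these conventions are fixed the estimates reduce to routine Leibniz and Hölder bounds. The analogous statement in \cite{Kelly-Melbourne} provides a template, adapted here to the specific Hölder norm \eqref{eq:holder_norm}.
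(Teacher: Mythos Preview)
The paper does not actually give a proof of this lemma; it simply records the statement and defers to \cite[Cor.~4.6]{Kelly-Melbourne}, remarking that the result was established there for an equivalent norm. Your outline is therefore not being compared against any argument in the present paper, and in fact your proposal is precisely the standard route---define the norm by pullback through $\imath$, verify the projective upper bound via multiplicative factorization on simple tensors, and the injective lower bound via a partial-evaluation operator---which is also the template of the cited reference you invoke at the end.

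Your identification of the only genuine technical point is accurate: one must produce $T_\psi$ with operator norm exactly $\|\psi\|$, not merely up to a dimensional constant. This is cleanly achieved as follows. For each $a^*\in(\R^d)^*$ with $|a^*|\le 1$, the reasonable cross-norm property on $\R^d\otimes\R^d$ (the dual condition applied with $b^*$ running over the unit ball) shows that the slice map $(a^*\otimes\id):\R^d\otimes\R^d\to\R^d$ is a contraction. Hence one may define $\tilde T_\psi:\C^\alpha_b(\R^d,\R^d\otimes\R^d)\to\R^d$ by duality, $\braket{\tilde T_\psi G,a^*}\define\psi\big((a^*\otimes\id)\circ G\big)$, which has norm $\le\|\psi\|$; then $T_\psi$ is just post-composition by $\tilde T_\psi$ in the inner variable, and one checks directly that $T_\psi(\imath(u))=\sum_i\psi(g_i)f_i$, yielding $|(\phi\otimes\psi)(u)|=|\phi(T_\psi\imath(u))|\le\|\phi\|\|\psi\|\|u\|_\imath$. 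With this in hand your sketch is complete and correct.
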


\subsection{Rough Path Theory}\label{sec:rough_path}

The theory of rough paths has by now certainly found its way into the mathematical mainstream. This is, of course, also due to the very nice monographs \cite{Friz-Victoir,Friz-Hairer} to which we refer for further details. We shall work in the framework of controlled rough paths \cite{Gubinelli-lemma,Feyel-delaPradelle} popularized in the book of Friz and Hairer. 

Let $X,Y:[0,T]\to\CX$ be H\"older continuous with exponents $\gamma_1$ and $\gamma_2$, respectively. If $\gamma_1 + \gamma_2 >1$, Young's integration theory enables us to define $\int_0^T Y dX$ as the limit of Riemann sums $\sum_{[u,v]\in \mathcal P} Y_u(X_v-X_u)$ along any sequence of partitions $\CP$ of $[0,T]$ with mesh $|\CP|$ tending to $0$ \cite{Young1936}. Furthermore, the mapping $(X,Y)\mapsto \int_0^\cdot Y dX$ is continuous. Thus, for $X\in \C^{\f 12+}$, one expects a unique solution theory to the Young differential equation $dY_s=f(Y_s) dX_s$ (given enough regularity on $f$). Indeed, if $f\in \C_b^2$, this equation is well posed and the solution is  continuous in both the driver $X$ and the initial data.
In the case $\gamma_1\leq\f 12$, this fails and one cannot define the integral $\int X dX$ by the above Riemann sum anymore. This is partially remedied by rough path theory, which allows to define an integral with respect to less regular integrators by enhancing the Riemann sum, see \eqref{eq:enhanced} below. 

Let $T>0$, $\gamma\in(0,1)$, and $\CX$ be a Banach space. We write $|X|_{\C^\gamma}$ for the $\gamma$-H\"older norm of a function $X:[0,T]\to\CX$. The space of all $\gamma$-H\"older functions $[0,T]\to\CX$ is denoted by $\C^\gamma\big([0,T],\CX\big)$. We set $\Delta_T\define\{(s,t):\,0\leq s\leq t\leq T\}$ and
\begin{equation*}
  \C^{\gamma}(\Delta_T,\CX\otimes\CX)\define\big\{\XX:\Delta_T\to\CX\otimes\CX:\,|\XX|_{\gamma}<\infty\big\},\qquad |\XX|_{\gamma}\define\sup_{(s,t)\in\Delta_T}\frac{\|\XX_{s,t}\|_{\CX\otimes\CX}}{|t-s|^\gamma},
\end{equation*}
where $\|\cdot\|_{\CX\otimes\CX}$ is the norm on $\CX\otimes\CX$. Note that any $\gamma$-H\"older function $X\in\C^\gamma\big([0,T],\CX\big)$ has a natural lift to $\C^{\gamma}(\Delta_T,\CX)$ by declaring $X_{s,t}\define X_t-X_s$. A $\gamma$-rough path with values in $\CX$ is a pair $\X=(X,\XX)\in\C^{\gamma}\big([0,T],\CX\big)\oplus\C^{2\gamma}\big(\Delta_T,\CX\otimes\CX\big)$ obeying the algebraic constraint (\textit{Chen's relation})
\begin{equation}\label{eq:chen}
  \XX_{s,t}-\XX_{s,u}-\XX_{u,t}=X_{s,u}\otimes X_{u,t}\qquad\forall\, 0\leq s\leq u\leq t\leq T.
\end{equation}
Owing to this identity, the values of the two-parameter process $\XX$ can actually be recovered from the knowledge of $\XX_t\define\XX_{0,t}$, $t\in[0,T]$:
\begin{equation*}
  \XX_{s,t}=\XX_{t}-\XX_{s}-X_s\otimes X_t.
\end{equation*}
It is customary to denote the set of $\gamma$-rough paths with values in $\CX$ by $\FC^\gamma\big([0,T],\CX\big)$. Albeit this space is certainly not linear, it becomes a complete metric space in the topology inherited from the Banach space $\C^{\gamma}\big([0,T],\CX\big)\oplus \C^{2\gamma}\big([0,T],\CX^2\big)$ which is canonically equipped with the norm
\begin{equation*}
  |\X|_{\C^{\gamma}\oplus\C^{2\gamma}}\define|X|_{\C^\gamma}+|\XX|_{\C^{2\gamma}}.
\end{equation*}
We emphasize that---unless $\CX=\{0\}$---the space $\FC^\gamma\big([0,T],\CX\big)$ is not separable. In order to avoid norm versus seminorm considerations, we shall tacitly assume that $X_0=0$ which is anyways the case in our ultimate application of the theory.

Let $f\in\C^2\big(\R^d,\L(\CX,\R^d)\big)$, $Y\in\C^\gamma([0,T],\R^d)$, and $\X\in\FC^\gamma([0,T],\CX)$. For $\gamma\in(\frac13,\frac12]$ the integral
\begin{equation}\label{eq:rough_integral}
   \int_0^\cdot f(Y_s)\,d\X_s\in\C^\gamma([0,T],\R^d)
\end{equation} 
can be defined, provided that $Y$ is controlled by $X$. This is to say, there is a $Y^\prime\in\C^\gamma\big([0,T],\L(\CX,\R^d)\big)$ such that
\begin{equation*}
  \big|Y_{s,t}-Y_s^\prime X_{s,t}\big|=\CO\big(|t-s|^{2\gamma}\big),\qquad 0\leq s,t\leq T.
\end{equation*}
In this case, it is also customary to write $(Y,Y^\prime)\in\FD_X^{2\gamma}([0,T],\R^d)$. Under these conditions, the integral \eqref{eq:rough_integral} is well defined as the limit of \textit{compensated} Riemann sums along an arbitrary sequence of partitions with mesh tending to $0$:
\begin{equation}\label{eq:enhanced}
  \int_s^t f(Y_s)\,d\X_s\define\lim_{|\CP|\to 0}\sum_{[u,v]\in\CP([s,t])}\Xi_{u,v},\qquad\Xi_{u,v}\define\Big( f(Y_u) X_{u,v}+\big(Y_u^{\prime}\odot Df(Y_u)\big)\XX_{u,v}\Big).
\end{equation}
Here, we introduced the `product'
\begin{equation*}
  \big(Y_u^{\prime}\odot Df(Y_u)\big)(x\otimes y)\define\sum_{i=1}^d \big(Y_u^\prime(x)\big)_i \partial_i f(Y_u)(y),
   \qquad x,y\in\CX.
\end{equation*}
Employing the algebraic relation \eqref{eq:chen}, it is then an easy exercise to check that
\begin{equation*}
  \delta\Xi_{s,u,t}\define\Xi_{s,t}-\Xi_{s,u}-\Xi_{u,t}=\CO\big(|t-s|^{3\gamma}\big),\qquad 0\leq s\leq u\leq t\leq T,
\end{equation*}
whence the integral \eqref{eq:rough_integral} is indeed well defined by the sewing lemma \cite{Gubinelli-lemma,Feyel-delaPradelle}.

\begin{proposition}\label{prop:rde_existence}
  Let $Y_0\in\R^d$ and $\gamma\in\big(\frac13,\frac12\big)$. If $\X\in\FC^{\gamma}([0,T],\CX)$ and $f\in\C_{b}^{\f 1 \gamma+}\big(\R^d,\L(\CX,\R^d)\big)$, then there is a controlled rough path $\big(Y,f(Y)\big)\in\D_X^{2\gamma}([0,T],\R^d)$ such that
  \begin{equation*}
	Y_t=Y_0+\int_0^t f(Y_s)\,d\X_s,\qquad t\in[0,T].
  \end{equation*}
  Moreover, the solution map $\Phi(Y_0,\X)\define Y$ is continuous from $\R^d\times\FC^{\gamma}([0,T],\CX)$ to $\C^{\gamma^\prime}([0,T],\R^d)$ for each $\gamma^\prime<\gamma$.
\end{proposition}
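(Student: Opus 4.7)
My plan is to follow the classical Gubinelli--Friz--Hairer Picard iteration in the space of controlled rough paths, adapted to the Banach-valued setting. The argument splits into three stages: (i) show that the right-hand side of the RDE defines a self-map on a ball of controlled rough paths over a short time interval; (ii) obtain a local fixed point by the contraction principle and patch to a solution on all of $[0,T]$; (iii) upgrade the a priori estimates to Lipschitz bounds in $(\X,Y_0)$, giving continuity of $\Phi$.

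For stage (i), the key input is the composition lemma: if $(Y,Y')\in\D_X^{2\gamma}$ and $f\in\C_b^{1/\gamma+}\big(\R^d,\L(\CX,\R^d)\big)$, then $\big(f(Y),Df(Y)\,Y'\big)$ again lies in $\D_X^{2\gamma}$, with explicit bounds in terms of $|f|_{\C_b^{1/\gamma+}}$, $\$(Y,Y')\$_{\D_X^{2\gamma}}$, and $|X|_{\C^\gamma}$. The regularity exponent $1/\gamma+>2$ provided by $\gamma\in(\tfrac13,\tfrac12)$ is precisely what is needed for a Taylor expansion of $f$ at order two. Combining this with the sewing-lemma integral described around \eqref{eq:enhanced} shows that the pair
$$
(Z,Z')\define\Big(Y_0+\int_0^\cdot f(Y_s)\,d\X_s,\,f(Y)\Big)
$$
is itself controlled by $X$, so the map $\M:(Y,Y')\mapsto(Z,Z')$ sends $\D_X^{2\gamma}([0,T_0],\R^d)$ to itself, and a standard a priori estimate of the form $\$\M(Y,Y')\$\leq C(1+|Y_0|)+C\,T_0^\kappa\,\$(Y,Y')\$$ for some $\kappa>0$ makes a sufficiently large ball $\M$-invariant for small $T_0$.

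For stage (ii), applying the same two estimates to differences yields
$$
\$\M(Y,Y')-\M(\tilde Y,\tilde Y')\$\leq C\,T_0^\kappa\,\$(Y,Y')-(\tilde Y,\tilde Y')\$,
$$
so that $\M$ is a contraction on $[0,T_0]$ once $T_0$ is sufficiently small, depending only on $|\X|_{\C^\gamma\oplus\C^{2\gamma}}$ and $|f|_{\C_b^{1/\gamma+}}$. Banach's theorem produces a unique local fixed point, and since $T_0$ is uniform in the initial condition along the way, concatenation gives a solution on $[0,T]$. For stage (iii), reusing these Lipschitz estimates between two triples $(Y_0,\X)$ and $(\tilde Y_0,\tilde\X)$ gives control of the solutions in $\D_X^{2\gamma}$, hence a bound in $\C^\gamma$. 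Because the controlled-rough-path metric is only lower semicontinuous in the driver, one then trades regularity for continuity via the interpolation $\C^\gamma\hookrightarrow\C^{\gamma'}$ for $\gamma'<\gamma$, which gives continuity of $\Phi$ exactly in the topology asserted in the proposition.

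The main technical obstacle---compared with the textbook finite-dimensional version---is that every estimate must be performed in a Banach space: the increments $\XX_{u,v}$ take values in $\CX\otimes\CX$, and the compensator $\big(Y_u'\odot Df(Y_u)\big)\XX_{u,v}$ pairs them against a tensor built from $Y'$ and $Df$. The assumption that $\CX\otimes\CX$ carries a \emph{reasonable} cross norm, together with \cref{lem:tensor_product}, delivers exactly the identities $\|x\otimes y\|_{\CX\otimes\CX}=\|x\|_\CX\|y\|_\CX$ and the matching duality estimate required to make all the tensor bookkeeping in the finite-dimensional proof go through verbatim. Since the sewing lemma of \cite{Gubinelli-lemma,Feyel-delaPradelle} is dimension-free, no further difficulty arises, and the argument reduces to a careful transcription of the Friz--Hairer proof into this functional-analytic framework, as already observed in \cite{Kelly-Melbourne}.
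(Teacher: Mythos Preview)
Your proposal is correct and matches the paper's treatment: the paper does not give its own proof of this proposition but states it as a background result from \cite{Friz-Hairer}, with the remark (immediately following the statement) that the standard controlled-rough-path proof goes through once one observes, following \cite{Kelly-Melbourne}, that a reasonable crossnorm on $\CX\otimes\CX$ suffices in place of the projective tensor product. Your outline of the Picard iteration in $\D_X^{2\gamma}$, together with the explicit appeal to the reasonable-crossnorm identity and the observation from \cite{Kelly-Melbourne}, is exactly this.
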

\begin{remark}
  We note that the theory of controlled rough paths usually assumes the continuous embedding
  \begin{equation}\label{eq:rp_embedding}
	\L\big(\CX,\L(\CX,\R^d)\big)\hookrightarrow\L(\CX\otimes\CX,\R^d),
  \end{equation}
  see \cite[Section 1.5]{Friz-Hairer}. It is well known that this is ensured by working with the projective tensor product on $\CX\otimes\CX$. This, however, turns out to be rather inconvenient when $\CX$ is a H\"older space, which is the case of interest in this article. It is in fact beneficial to work with the more explicit reasonable crossnorm introduced in \cref{sec:tensor_product}. Thankfully, as already observed by Kelly and Melbourne \cite[Proof of Theorem 3.3]{Kelly-Melbourne}, the proof of \cref{prop:rde_existence} in \cite{Friz-Hairer} does not really require the embedding \eqref{eq:rp_embedding}, but it is enough to equip the tensor product with a reasonable crossnorm.
\end{remark}

\section{The Multidimensional Limit Theorem}

\subsection{Statement of the Result}

First we need to introduce a bit of notation: $\WW$ denotes the Stratonovich lift of an $n$-dimensional standard Wiener process $W$, that is, $\WW^{i,j}_{s,t}=\int_s^t \big(W_r^i-W_s^i)\,\circ dW_r^j$. 
For a smooth function $X$, we let $\XX_{s,t}\define\int_s^t (X_{r}-X_s)\,\otimes dX_r$ denote the canonical lift and set $ (\Upsilon\otimes \Upsilon) \XX_{s,t} \; \define \; \int_s^t \Upsilon X_{s,r} d (\Upsilon X)_r$ where $\Upsilon$ is a matrix.

\begin{theorem}\label{thm-A}
	Let $y\in\vol_n(\beta,\Theta)$ for some $\beta,\Theta>0$ and 
	$\hat{\Theta}=\big|D^{-\frac12}O^{\top}\big|^2\Theta$.  Then for any real-valued functions $G_1,\dots,G_N\in L^2\big(\R^n,N(0,\Sigma)\big)$ with
	\begin{equation*}
	\min_{k=1,\dots,N}\herm(G_k)>\frac1\beta
	\end{equation*}
	and any terminal time $T>0$ all of the following hold: 
	\begin{itemize}
		\item \textbf{\textup{(Finite dimensional distributions)}} If each $G_k$ is of fast chaos decay with parameter $\hat{\Theta} (2n-1) +1$, then
		\begin{equation}\label{eq:main_fdd_convergence}
		X^{\epsilon}_t=  \left( \sqrt{\epsilon} \int_0^{\frac{t}{\epsilon}} G_1(y_s) \,ds , \dots, \sqrt{\epsilon} \int_0^{\frac{t}{\epsilon}} G_N(y_s) \,ds \right)_{t\in[0,T]}\fdd \Upsilon W,
		\end{equation}
		where $W=(W_t)_{t\in[0,T]}$ is a standard Wiener process and $\Upsilon$ is the unique non-negative square root of the matrix
		\begin{equation*}
		\Upsilon^2_{i,j}\define \int_0^{\infty}\Expec{G_{i}(y_r)  G_{j}(y_0) +  G_{i}(y_0)  G_{j}(y_r)} \,dr,\qquad i,j=1,\dots,N.
		\end{equation*}
		\item \textbf{\textup{(Functional Central Limit Theorem)}} If each $G_k$ is of fast chaos decay with parameter $ \hat{\Theta}  (2n-1)(p_k-1)+1$ for $p_k>2$,  then the convergence \eqref{eq:main_fdd_convergence} is actually weakly in $\C^{\gamma}\big([0,T], \R^N\big)$ for any $\gamma<\frac12-(\min_k p_k)^{-1}$.
		
		\item \textbf{\textup{(Rough Central Limit Theorem)}} If $\min_{k=1,\dots,N}\herm(G_k)>2\beta^{-1}$ and each $G_k$ is of fast chaos decay with parameter $\hat{\Theta} (4n-1)(p_k-1)+1$, then, for any $\gamma <\f 1 2 - ( \min_k p_k )^{-1}$, where $p_k > 2$, 
		$$ \X^{\epsilon}=\big(X^\varepsilon,\XX^\varepsilon\big) \Rightarrow \X$$
		in the rough path space $\FC^{\gamma}\big([0,T],\R^N\big)$, where 
		\begin{equation}\label{eq:area}\begin{aligned}
		\X_{s,t}&= \big( \Upsilon W_{s,t}, \quad  ( \Upsilon \otimes \Upsilon ) \WW_{s,t}+\Xi(t-s)\big),\\
		\Xi_{i,j} &= \f 1  2 \int_0^{\infty} \E\left[ G_i(y_0) G_j(y_r) -  G_i(y_r) G_j(y_0) \right]\, dr.
		\end{aligned}
		\end{equation}
	\end{itemize}
\end{theorem}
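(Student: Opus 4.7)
My plan is to base the analysis on the Hermite expansion $G_k(y_s)=\sum_{\bl\in\N_0^n}c^k_\bl H_\bl(z_s)$ in the normalized process $z_s=D^{-\f 1 2}O^\top y_s$, together with the multiple Wiener integral representation $H_\bl(z_s)=I(\hat K(s-\cdot)^{\otimes\bl})$ furnished by \cref{lem-Hermite-to-st-integral}. This places each component of $X^\epsilon_t$ inside a countable sum of Wiener chaoses. I would truncate this expansion at some level $M$, establish joint Gaussian convergence of the finite truncation via a vector-valued fourth moment theorem of Peccati--Tudor type, and use the fast chaos decay hypothesis to control the $L^2$ tail uniformly in $\epsilon$.

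\textbf{Finite-dimensional distributions and functional CLT.} The central covariance computation is
$$\E\big[X^{\epsilon,i}_t X^{\epsilon,j}_t\big] \;=\; \epsilon\int_0^{t/\epsilon}\!\!\int_0^{t/\epsilon}\E\big[G_i(y_u)G_j(y_v)\big]\,du\,dv \;\longrightarrow\; t\,\Upsilon^2_{i,j},$$
which follows from stationarity, a change of variables, and dominated convergence, since the diagram formula \cref{thm-diagramm-formulae} gives $|\E[G_i(y_u)G_j(y_v)]|\lesssim|u-v|^{-\beta\min_k\herm(G_k)}$ with exponent strictly above $1$. For the componentwise CLT in each Wiener chaos of order $m$, Peccati--Tudor reduces the task to showing that the contraction norms $\|\Tr_p(f^\epsilon_m\otimes f^\epsilon_m)\|_{\H^{\otimes 2(m-p)}}$ vanish as $\epsilon\to 0$ for $1\leq p\leq m-1$; written out via \eqref{product-formula}, each such norm is an iterated correlation integral whose convergence is again guaranteed by the same Hermite-rank bound. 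Hypercontractivity within finite chaoses then upgrades the $L^2$ estimate $\E[|X^\epsilon_{s,t}|^2]\lesssim|t-s|$ to an $L^{p_k}$ estimate, which yields Kolmogorov tightness in $\C^\gamma$ for $\gamma<\f 1 2 -(\min_k p_k)^{-1}$; the fast-decay parameter $\hat\Theta(2n-1)(p_k-1)+1$ is calibrated precisely so that the series of these $L^{p_k}$ bounds is absolutely summable across chaoses.

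\textbf{Rough CLT.} For the iterated integral
$$\XX^{\epsilon,ij}_{s,t}\;=\;\epsilon\int_{s/\epsilon}^{t/\epsilon}\!\!\int_{s/\epsilon}^{r} G_i(y_u)G_j(y_r)\,du\,dr,$$
I would split $\XX^\epsilon=\E[\XX^\epsilon]+(\XX^\epsilon-\E[\XX^\epsilon])$. A change of variable shows $\E[\XX^{\epsilon,ij}_{s,t}]\to(t-s)\int_0^\infty\E[G_i(y_0)G_j(y_r)]\,dr$; its symmetric part equals $\f 1 2 \Upsilon^2_{i,j}(t-s)$ and is absorbed into the symmetric component of $(\Upsilon\otimes\Upsilon)\WW_{s,t}$ through the joint CLT already established for $X^\epsilon$, while its antisymmetric part is exactly the drift $\Xi_{i,j}(t-s)$ appearing in \eqref{eq:area}. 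The centered fluctuation decomposes via \eqref{product-formula} into a sum of Wiener-chaos contributions whose second-order component converges jointly with $X^\epsilon$ to the Stratonovich iterated integral of $\Upsilon W$ via a further Peccati--Tudor argument applied to the vector $(X^\epsilon,\XX^\epsilon-\E[\XX^\epsilon])$, while contributions of other chaos orders vanish in $L^2$.

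\textbf{Main obstacle.} The key difficulty is tightness of $\XX^\epsilon$ in $\C^{2\gamma}(\Delta_T)$: one needs an $L^p$ bound $\E[|\XX^\epsilon_{s,t}|^p]\lesssim|t-s|^p$ uniformly in $\epsilon$. Upon chaos expansion of the product $G_i G_j$, the second moment of each chaos component unfolds into quadruple correlation integrals over a time-domain of side-length $(t-s)/\epsilon$, whose integrability (after multiplication by $\epsilon^2$) requires a correlation-decay exponent $\beta|\bl|>2$; this is the origin of the stronger Hermite-rank assumption $\min_k\herm(G_k)>2/\beta$ in this part of the theorem. The inflated parameter $\hat\Theta(4n-1)(p_k-1)+1$ is then calibrated to absorb both the $(2n-1)^{|\bl|}$ combinatorial growth from applying the diagram formula to four-point correlations and the hypercontractivity constants needed to upgrade second moments to moments of order $p_k$, ensuring a uniform bound across the full chaos expansion.
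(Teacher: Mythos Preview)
Your treatment of the finite-dimensional distributions and the functional CLT matches the paper's approach closely: Hermite expansion, fourth moment theorem on the truncation, $L^2$ control of the tail via the fast chaos decay, and hypercontractivity for the $L^p$ H\"older bound.

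For the rough CLT, however, the paper takes a genuinely different route from yours, and your sketch has two concrete problems.

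\textbf{Limit identification.} You propose to apply a Peccati--Tudor argument to the joint vector $(X^\epsilon,\XX^\epsilon-\E[\XX^\epsilon])$ and conclude that the second component converges to the Stratonovich iterated integral of $\Upsilon W$. But the fourth moment theorem (as stated in \cref{fourth-moment-theorem}) produces \emph{Gaussian} limits, whereas $(\Upsilon\otimes\Upsilon)\WW_{s,t}$ lives in the second Wiener chaos and is not Gaussian. So a direct Peccati--Tudor step cannot give you the joint law. The paper avoids this by a martingale decomposition (\cref{lem:decomposition}): one writes $X^\epsilon_t=M^\epsilon_t+(Z^\epsilon_0-Z^\epsilon_t)$ with $M^\epsilon$ an $L^2$-bounded martingale and $Z^\epsilon_t\to 0$ in $L^2$, integrates $\int X^\epsilon\otimes dX^\epsilon$ by parts, and applies the Kurtz--Protter stability result (\cref{prop:kurtz_protter}) to pass to the limit in $\int X^\epsilon\otimes dM^\epsilon$. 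The drift $\Xi$ then emerges from Birkhoff's ergodic theorem applied to the remainder $\int(Z^\epsilon_0-Z^\epsilon_s)\otimes dX^\epsilon_s$, not from the symmetric/antisymmetric split of the mean that you describe.

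\textbf{Role of the condition $\herm>2/\beta$.} You attribute this hypothesis to the tightness of $\XX^\epsilon$, claiming the quadruple correlation integral needs decay exponent $\beta|\bl|>2$. This is not so: \cref{lem-tightness-iterated-hermite-poly} establishes the $L^2$ bound $\E[|\XX^\epsilon_{s,t}|^2]\lesssim|t-s|^2$ under only $|\bl|\wedge|\bk|>\beta^{-1}$, via a H\"older trick that bounds the four-fold integral by a product of four one-dimensional integrals. The stronger condition $\herm>2/\beta$ is instead needed for the \emph{conditional decay condition} of \cref{def:cond-dec-con}, namely $\int_0^\infty\|\E[G(y_s)\mid\CF_0]\|_{L^2}\,ds<\infty$ (\cref{prop:volterra_cond_decay}), which in turn is what makes $Z^\epsilon$ well-defined and vanishing in the martingale decomposition. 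Your explanation of the $(4n-1)$ factor in the chaos-decay parameter is correct, but it enters through the tightness estimate, not through the limit identification.
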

\Cref{thm-A} is a multi-dimensional version of the result in \cite{Gehringer-Li-2020-1}, see also \cite{Gehringer2020} for related limit theorems for Hermite processes.

\begin{remark}\label{rem:stratonovich_correction}
	The limiting rough path \eqref{eq:area} can be rewritten in It\^o form as follows. Let $\WW^{\text{It\^o}}$ be the It\^o lift of $W$. Then
	\begin{equation*}
		\X_{s,t}=\big(\Upsilon W_{s,t}, \quad (\Upsilon\otimes\Upsilon)\WW_{s,t}^{\text{It\^o}} + \Lambda(t-s)\big),
	\end{equation*}
	where
	\begin{equation*}
		\Lambda_{i,j}=\int_0^\infty\Expec{G_i(y_0)G_j(y_r)}\,dr.
	\end{equation*}
\end{remark}

\subsection{Functional Central Limit Theorem}
We fix $y\in\vol_n(\beta,\Theta)$ and write $K$ for its kernel. Let $z$ be the normalized process \eqref{eq:normalized_process}. Recall, $z\in\vol_n(\beta,\hat\Theta)$ where $\hat{\Theta}=\big|D^{-\frac12}O^{\top}\big|^2\Theta$. We also set $\tau_t\hat K^i(s)\define\hat K^i(t-s)$, $s\in\R$, for the normalized kernel defined in \eqref{eq:transformed_kernel}.

\begin{lemma}\label{lem-decomposition}
For each $\bl\in\N_0^n$, $H_\bl(z_t)$ can be written in terms of the multiple It\^o-Wiener integral as follows:
\begin{equation}
H_{\bl}(z_t)= \prod _{i=1}^n  I\big( (\tau_t\hat{K}^i)^{\otimes \bl_i} \big)= I\big((\tau_t\hat{K})^{\otimes \bl} \big).
\end{equation}
Consequently, $\{H_\bl(z_t), \bl\in\N_0^n\}$ is an orthonormal set for any $t\geq 0$.
\end{lemma}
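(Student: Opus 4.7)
The plan is to realize each component $z_t^i$ as an isonormal Gaussian functional applied to a unit vector, then apply \cref{lem-Hermite-to-st-integral} to each factor separately and glue the factors together via the product formula \eqref{product-formula}. By \eqref{eq:transformed_kernel} we have $z_t^i = \int_\R \hat{K}^i(t-u)\,dW_u = W(\tau_t \hat{K}^i)$, and since $\Expec{z_0 \otimes z_0} = \id$, stationarity gives
\begin{equation*}
\Braket{\tau_t \hat K^i,\tau_t \hat K^j}_{\H} \;=\; \Expec{z_t^i z_t^j} \;=\; \delta_{ij}
\qquad\forall\,i,j=1,\dots,n,\;t\geq 0.
\end{equation*}
In particular each $\tau_t \hat K^i$ is a unit vector in $\H$, so \cref{lem-Hermite-to-st-integral} applies and yields $H_{\bl_i}(z_t^i) = I\bigl((\tau_t\hat K^i)^{\otimes \bl_i}\bigr)$ for every $i$.

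Next I would combine these $n$ factors. Since $H_{\bl}(z_t) = \prod_{i=1}^n H_{\bl_i}(z_t^i)$, it suffices to show, inductively in $k=1,\dots,n$, that
\begin{equation*}
\prod_{i=1}^k I\bigl((\tau_t\hat K^i)^{\otimes \bl_i}\bigr) \;=\; I\Bigl(\bigotimes_{i=1}^k (\tau_t\hat K^i)^{\otimes \bl_i}\Bigr).
\end{equation*}
The inductive step is an application of the product formula \eqref{product-formula}: multiplying the induction hypothesis by $I\bigl((\tau_t\hat K^{k+1})^{\otimes \bl_{k+1}}\bigr)$ produces a sum over complete pairings $P$ of the two index sets, with each nontrivial pairing carrying at least one inner-product factor $\Braket{\tau_t\hat K^i,\tau_t\hat K^{k+1}}_{\H}$ with $i\leq k$. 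By the orthogonality displayed above, every such factor vanishes, so only the empty pairing survives and gives precisely the tensor product of kernels. This establishes the first identity of the lemma, and the second is just the notational convention from \eqref{eq:hermite_intro}-style indexing introduced in \cref{sec:malliavin}.

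For the orthonormality claim, the cleanest route is the isometry \eqref{eq:isometry}: for $\bl,\bk\in\N_0^n$ with $|\bl|\neq|\bk|$ the inner product vanishes by the chaos decomposition, while for $|\bl|=|\bk|$ one expands the symmetrizations and observes that every surviving term involves a product of inner products $\Braket{\tau_t\hat K^i,\tau_t\hat K^j}_{\H}=\delta_{ij}$, forcing $\bl=\bk$. Alternatively, since the components $z_t^1,\dots,z_t^n$ are jointly Gaussian with identity covariance, hence independent, one has
\begin{equation*}
\Expec{H_{\bl}(z_t)H_{\bk}(z_t)} \;=\; \prod_{i=1}^n \Expec{H_{\bl_i}(z_t^i)H_{\bk_i}(z_t^i)} \;=\; \delta_{\bl,\bk}\,\bl!,
\end{equation*}
using the one-dimensional relation $\Braket{H_m,H_n}_{L^2(\R,N(0,1))}=\delta_{m,n}m!$ recalled in \cref{sec-preliminaries}. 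The only genuinely nontrivial step is the bookkeeping in the product formula to confirm that all nonzero-order contractions vanish under the cross-orthogonality of the kernel rows; once that is in place the rest is routine.
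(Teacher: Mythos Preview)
Your proof is correct and follows the same approach as the paper: realize each component $z_t^i$ as $W(\tau_t\hat K^i)$, apply \cref{lem-Hermite-to-st-integral} factor by factor, and use the orthogonality $\langle\tau_t\hat K^i,\tau_t\hat K^j\rangle_{\H}=\delta_{ij}$ (coming from $\Expec{z_0\otimes z_0}=\id$) to pass from the product of multiple integrals to the single integral of the tensor product. The paper's proof is terser---it records the factorwise identity and the kernel orthogonality and leaves the product-formula bookkeeping implicit---whereas you spell out the induction via \eqref{product-formula} and also give an independent route to orthogonality through the independence of the components; both are fine and add no new idea. One cosmetic remark: your computation $\Expec{H_{\bl}(z_t)H_{\bk}(z_t)}=\delta_{\bl,\bk}\,\bl!$ shows the family is \emph{orthogonal}, not orthonormal, which matches the normalization recalled in \cref{sec-preliminaries}; the paper's wording ``orthonormal'' is loose in the same way.
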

\begin{proof}
Given  a multi-index $\bl=(\bl_1, \dots, \bl_n)\in\N_0^n$, using \cref{lem-Hermite-to-st-integral},
$$H_{\bl}(z_t)= \prod_{i=1}^n  H_{\bl_i} ( W(\tau_t \hat K^i ))= \prod_{i=1}^n  I\big((\tau_t\hat{K}^i)^{\otimes \bl_i} \big).$$
Since  $\E[z_0\otimes z_0]=\id$, we see 
$$\< \tau_t \hat{K^i}, \tau_t\hat{K^j}\>=\int_\R \< \hat K^i(t-r), \hat K^j(t-r)\> dr
=\int_\R\< \hat K^i(-r), \hat K^j(-r)\>dr=\delta_{i,j},$$	
as claimed. 
\end{proof}


Let $G \in L^2\big(\R^n, N(0,\Sigma)\big)$.  The expansion \eqref{eq:hermite_expansion}
becomes
\begin{equation}\label{eq:normalized_hermite}
  G(y_t)= \sum_{\bl\in\N_0^n}c_{\bl}H_{\bl}(z_t)
  =
  \sum_{\bl\in\N_0^n}c_{\bl}I\big((\tau_t\hat{K})^{\otimes \bl}\big).
\end{equation}
The first step towards the proof of \cref{thm-A} is to  establish a central limit theorem for the finite-dimensional distributions of the vector-valued process 
\begin{equation}\label{eq:finite_dim}
  \left(  \sqrt{\epsilon} \int_0^{\f t \epsilon} G_1(y_s) ds, \dots, \sqrt{\epsilon} \int_0^{\f t \epsilon} G_N(y_s) ds  \right)_{t\in[0,T]}.
\end{equation}
The argument proceeds along a well-established pathway, see e.g. \cite{Bai-Taqqu, Gehringer2020,Nourdin-Nualart-Zintout}:
\begin{enumerate}
  \item\label{it:finite} We first assume $G_1,\dots,G_N\in L^2(\R^n,N(0,\Sigma))$ live in a finite number of chaoses (that is, their Hermite expansion \eqref{eq:normalized_hermite} is finite) and prove the statement of \cref{thm-A} by invoking the fourth moment theorem of Nualart, Peccati, and Tudor \cite{Nualart-Peccati,Peccati-Tudor}, see also \cite{Nualart2008,Nourdin2012}.
  \item A simple truncation argument then shows that the general case can be reduced to \ref{it:finite}.
\end{enumerate}

\begin{proposition}[\normalfont{Fourth Moment Theorem \cite[Theorem 5.2.7]{Nourdin2012}}]
	\label{fourth-moment-theorem}
	Let $m\geq 2$ and $\bl\in\N^m$ be a multi-index with ordered components, that is, $\bl_1\leq\cdots\bl_m$. 
	Let $(f^{\epsilon})_{\varepsilon>0} \subset \oplus_{i=1}^m \H^{\otimes \ell_i}$.  
	Denote by $f_i$ the projection of $f$ to $ \H^{\otimes \ell_i}$.  
	Assume that for $i,j=1,\dots,m$, the limit
	$$   \Lambda_{i,j}\define\lim_{\varepsilon\to 0}\Expec{I(f_i^\varepsilon)I(f_j^\varepsilon)}$$
	exists and 
	$$\lim_{\epsilon\to 0} \Tr_P f_k^\varepsilon\otimes f_k^\varepsilon=0 \qquad \forall  P\in \bigcup_{i=1}^{m-1}{\mathcal P}_i,$$
	i.e. all contractions of $f_k^\epsilon$ with itself vanish, except for the $0^{\textup{th}}$ and the $k^{\textup{th}}$. 
	Then  we have   $$\big(I_{\bl_1}(f^{\epsilon}_1), \dots, I_{\bl_m}(f^{\epsilon}_m) \big)\Rightarrow\CN(0,\Lambda).$$
\end{proposition}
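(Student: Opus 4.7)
My plan is to prove this as the multi-dimensional fourth moment theorem of Peccati--Tudor, approached via the Malliavin--Stein method. The key observation is that, writing $L$ for the Ornstein--Uhlenbeck operator on the Wiener space of $\{W(h) : h \in \H\}$, one has $L I(f) = -\bl_i I(f)$ for every $f \in \H^{\otimes \bl_i}$, so that $-D L^{-1} I(f^\varepsilon_i) = \bl_i^{-1} D I(f^\varepsilon_i)$. The multivariate Stein bound for normal approximation with covariance matrix $\Lambda$ then yields, for any $C^2$ test function $\varphi \colon \R^m \to \R$ with bounded second derivative,
\begin{equation*}
  \big|\Expec{\varphi(F^\varepsilon)} - \Expec{\varphi(Z)}\big| \lesssim \sum_{i,j=1}^m \Big\| \Lambda_{ij} - \bl_j^{-1} \Braket{DI(f^\varepsilon_i), DI(f^\varepsilon_j)}_{\H} \Big\|_{L^2(\Omega)},
\end{equation*}
where $Z \sim \mathcal{N}(0, \Lambda)$ and $F^\varepsilon = (I(f^\varepsilon_1), \dots, I(f^\varepsilon_m))$. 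It thus suffices to show that each inner product on the right converges in $L^2(\Omega)$ to the deterministic limit $\bl_j \Lambda_{ij}$.

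Next, I would expand these inner products using the Malliavin-derivative identity $D_t I(f^\varepsilon_i) = \bl_i I(f^\varepsilon_i(\cdot, t))$ together with the product formula \eqref{product-formula}. Integration in $t$ produces a deterministic contribution (arising from the full pairing) equal to $\bl_j \Expec{I(f^\varepsilon_i) I(f^\varepsilon_j)}$, which by hypothesis converges to $\bl_j \Lambda_{ij}$, plus a sum of stochastic terms of the form $I\big(\Tr_P(f^\varepsilon_i \otimes f^\varepsilon_j)\big)$ indexed over partial pairings $P$. By the isometry \eqref{eq:isometry}, the $L^2(\Omega)$-norm of each such stochastic term is controlled by the $\H^{\otimes \cdot}$-norm of the cross-contraction $\Tr_P(f^\varepsilon_i \otimes f^\varepsilon_j)$.

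The main task is then to show these mixed partial contractions tend to zero as $\varepsilon \to 0$. This follows from a Cauchy--Schwarz estimate in Hilbert-space tensor products, which bounds any mixed partial contraction by the geometric mean of two self-contractions of the same order:
\begin{equation*}
  \|\Tr_P(f^\varepsilon_i \otimes f^\varepsilon_j)\|_{\H^{\otimes \cdot}} \leq \|\Tr_{P'}(f^\varepsilon_i \otimes f^\varepsilon_i)\|_{\H^{\otimes \cdot}}^{1/2} \|\Tr_{P''}(f^\varepsilon_j \otimes f^\varepsilon_j)\|_{\H^{\otimes \cdot}}^{1/2},
\end{equation*}
for suitably constructed self-pairings $P'$ and $P''$. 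Since each self-contraction on the right vanishes by the standing hypothesis, so does the mixed one. Substituting this into the Stein bound above and taking $\varepsilon \to 0$ delivers the desired weak convergence. The principal obstacle is the combinatorial bookkeeping needed to match each mixed pairing $P$ to admissible self-pairings $P'$ and $P''$ of the correct cardinality; this is a standard but fiddly exercise exploiting the compatibility of the tensor inner product with contractions.
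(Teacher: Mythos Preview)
Your proposal is essentially the standard Malliavin--Stein proof of the Peccati--Tudor multivariate fourth moment theorem, and the sketch is correct: the Stein bound reduces the problem to the $L^2$-convergence of $\bl_j^{-1}\langle DI(f_i^\varepsilon),DI(f_j^\varepsilon)\rangle_\H$ to $\Lambda_{ij}$, the product formula isolates the deterministic part, and the Cauchy--Schwarz inequality on tensor contractions transfers the vanishing of self-contractions to mixed ones.

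That said, the paper does not actually supply a proof of this proposition. It is stated as a quotation of \cite[Theorem 5.2.7]{Nourdin2012} and used as a black box; the subsequent lemmas (\cref{lem:limit,lem:fourth_moment_applies}) merely verify its hypotheses in the specific setting needed for \cref{prop:main_fdd}. So there is nothing to compare your argument against within the paper itself. Your write-up is consistent with the proof in the cited reference, which does proceed via the Malliavin--Stein machinery you describe. One small caveat: the Cauchy--Schwarz step for cross-contractions is usually stated for \emph{symmetric} kernels and the standard $r$-contractions $f\otimes_r g$, whereas the paper works with general (non-symmetric) pairings $\Tr_P$. Reducing to the symmetric case costs only a combinatorial constant, but you should make this passage explicit if you intend the argument to stand on its own in the paper's notation.
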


By Lemma \ref{lem-decomposition}, we may apply the proposition to stochastic processes of the form  $\{  \sqrt{\varepsilon}\int_0^{\frac{t_i}{\varepsilon}} H_{\bl^j}\left(z_u \right)\,du\}$, which is done in the following two key lemmas.
\begin{lemma}\label{lem:limit}
	Let $\bk,\bl\in\N_0^n$  with $|\bl| \wedge |\bk| >\f 1 \beta$ and $s,t\in[0,T]$, then
	\begin{equation*}\begin{aligned}
	&   \lim_{\varepsilon\to 0}\varepsilon\int_{0}^{\frac{t}{\varepsilon}}\int_0^{\frac{s}{\varepsilon}}
	\Expec{ H_\bl (z_u) H_{\bk}(z_v)  }\,du\,dv
	=\delta_{\vert \bk \vert , \vert \bl \vert } (s\wedge t)
	 \int_0^\infty   \Expec{ H_\bl (z_u) H_{\bk}(z_0)  }  + \Expec{ H_\bl (z_0) H_{\bk}(z_u)  }\, du.
	\end{aligned}  
	\end{equation*}
\end{lemma}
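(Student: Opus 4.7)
The plan is to exploit the diagram formula (\cref{thm-diagramm-formulae}) to extract two facts. First, $\Expec{H_\bl(z_u)H_\bk(z_v)}=0$ whenever $|\bl|\neq|\bk|$, which gives the Kronecker factor $\delta_{|\bk|,|\bl|}$ on the right-hand side for free (and makes both sides vanish in that case). Second, and more importantly, we obtain a quantitative decay bound. Both $z_u$ and $z_v$ have pairwise independent components (since $\Expec{z_0\otimes z_0}=\id$ combined with stationarity), so \cref{thm-diagramm-formulae} applies and, combined with the Volterra bound $|\Expec{z_u\otimes z_v}|\leq \hat\Theta(1\wedge|u-v|^{-\beta})$, yields
$$\big|\Expec{H_\bl(z_u)H_\bk(z_v)}\big|\leq \delta_{|\bl|,|\bk|}\,C_{\bl,\bk,n}\,\hat\Theta^{|\bl|}\big(1\wedge |u-v|^{-|\bl|\beta}\big).$$
Assuming henceforth $|\bl|=|\bk|$, the hypothesis $|\bl|\beta>1$ implies that
$$\phi(r)\define \Expec{H_\bl(z_r)H_\bk(z_0)}\,\1_{r\geq 0}+\Expec{H_\bl(z_0)H_\bk(z_{-r})}\,\1_{r<0}$$
lies in $L^1(\R)$, and by stationarity of $z$ we have $\Expec{H_\bl(z_u)H_\bk(z_v)}=\phi(u-v)$.

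Assume without loss of generality $s\leq t$. A linear rescaling rewrites the left-hand side of the lemma as
$$\frac{1}{\varepsilon}\int_0^t\!\int_0^s \phi\big((u-v)/\varepsilon\big)\,dv\,du=\int_0^t F_\varepsilon(u)\,du,\qquad F_\varepsilon(u)\define \int_{(u-s)/\varepsilon}^{u/\varepsilon}\phi(r)\,dr.$$
The rescaled kernel $K_\varepsilon(r)\define \varepsilon^{-1}\phi(r/\varepsilon)$ is an approximate identity of mass $\int_\R\phi$: its $L^1$-norm equals $\|\phi\|_{L^1}$ independently of $\varepsilon$, and the tail estimate $|\phi(r)|\lesssim |r|^{-|\bl|\beta}$ with $|\bl|\beta>1$ ensures concentration at the origin, i.e.\ $\int_{|r|>\delta}|K_\varepsilon|\,dr\to 0$ for every $\delta>0$. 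Consequently, for a.e.\ $u\in(0,s)$ the interval $((u-s)/\varepsilon,u/\varepsilon)$ eventually contains any compact neighbourhood of $0$, so $F_\varepsilon(u)\to\int_\R\phi$; while for $u\in(s,t)$ the interval sits in $(0,\infty)$ and recedes from the origin, giving $|F_\varepsilon(u)|\leq\int_{(u-s)/\varepsilon}^\infty|\phi|\to 0$. Since $|F_\varepsilon(u)|\leq\|\phi\|_{L^1}$ uniformly, dominated convergence yields
$$\int_0^t F_\varepsilon(u)\,du\longrightarrow s\int_\R\phi(r)\,dr=(s\wedge t)\int_0^\infty\!\!\Big(\Expec{H_\bl(z_r)H_\bk(z_0)}+\Expec{H_\bl(z_0)H_\bk(z_r)}\Big)\,dr,$$
which is the claimed identity; the case $s>t$ is symmetric.

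The main technical point is the quantitative decay estimate on the joint Hermite correlations. The raw correlation function of the Volterra process $z$ decays only like $r^{-\beta}$, which is in general not integrable; it is the exponent-amplification by $|\bl|$ delivered by the diagram formula that converts this into the integrable bound $r^{-|\bl|\beta}$ precisely under the Hermite-rank assumption $|\bl|\wedge|\bk|>1/\beta$. Once this integrability is secured, the limit itself follows by the classical approximate-identity scheme, paralleling the mean-ergodic arguments familiar from the strongly-mixing setting.
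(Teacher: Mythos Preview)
Your proof is correct and rests on the same ingredients as the paper's: orthogonality of Wiener chaoses of different order for the Kronecker factor, stationarity to reduce the correlation to a function $\phi$ of $u-v$, and the Hermite-rank amplification of the decay exponent to make $\phi\in L^1(\R)$. The organization differs slightly. The paper splits the domain into the square $[0,s/\varepsilon]^2$ and the cross rectangle $[s/\varepsilon,t/\varepsilon]\times[0,s/\varepsilon]$, handles the square by the explicit substitution $\varepsilon\int_0^{s/\varepsilon}\int_0^{s/\varepsilon}\phi(u-v)\,du\,dv=\int_0^{s/\varepsilon}(s-\varepsilon r)\big(\phi(r)+\phi(-r)\big)\,dr$ and passes to the limit directly, then bounds the rectangle crudely to show it vanishes. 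You instead rescale the whole domain to $[0,t]\times[0,s]$ and recognise $\varepsilon^{-1}\phi(\cdot/\varepsilon)$ as an approximate identity, which treats both regions $u\in(0,s)$ and $u\in(s,t)$ uniformly via a single dominated-convergence step. Your framing is arguably cleaner and makes the ``why'' more transparent; the paper's is slightly more hands-on. Neither approach requires anything the other does not.
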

\begin{proof}
	Using $H_\bl (z_u) =I\big(\tau_u\hat{K}^{\otimes \bk} \big)$ and by  \eqref{eq:isometry}, we may assume $|\bl|=|\bk|$ and also $s\leq t$. We have
	\begin{equation*}
	  \Expec{I\big(\tau_u\hat{K}^{\otimes \bk} \big)  I\big(\tau_v\hat{K}^{\otimes \bl} \big)}  = \Expec{
		\sum_{P\in {\mathcal P}} I \bigl( \Tr_P \big(\tau_u\hat{K}^{\otimes \bl} \big) \otimes \big(\tau_v\hat{K}^{\otimes \bk}) \bigr)  }.
	\end{equation*}  
	Let $\bar {\hat K}(t)\define  \tau_0 \hat K=\hat K(-t)$. 
	We will use the fact that for $v\le u$, the right-hand side is a sum of products of the form $\int_\R  \<\hat K^i(u-r), \overline{\hat K}^j(r-v) \>dr$, whence it is a function of $u-v$. We first take $s=t$, and by a change of variables,
	\begin{align*}
	\varepsilon\int_{0}^{\frac{s}{\varepsilon}}\int_0^{\frac{s}{\varepsilon}} \Expec{I\big(\tau_u\hat{K}^{\otimes \bl} \big)
		I\big(\tau_v\hat{K}^{\otimes \bk} \big)} \,du\,dv
	&= \int_{0}^{\frac{s}{\varepsilon}} \left( s-\varepsilon u \right) \Expec{I\big(\tau_u\hat{K}^{\otimes \bl} \big)   I\big(\tau_0\hat{K}^{\otimes \bk} \big)
		+I\big(\tau_0\hat{K}^{\otimes \bl} \big)   I\big(\tau_u{\hat{K}}^{\otimes \bk} \big)}\,du\\
	&\xrightarrow{\varepsilon\to0}  s
	 \int_0^\infty\Expec{I\big(\tau_u\hat{K}^{\otimes \bl} \big)   I\big(\tau_0\hat{K}^{\otimes \bk} \big)
		+I\big(\tau_0\hat{K}^{\otimes \bl} \big)   I\big(\tau_u{\hat{K}}^{\otimes \bk} \big)}\,du.
	\end{align*}
	Since
	\begin{equation*}
	\left|\varepsilon\int_{\frac{s}{\varepsilon}}^{\frac{t}{\varepsilon}}\int_{0}^{\frac{s}{\varepsilon}}
	\Expec{I\big(\tau_u\hat{K}^{\otimes \bl} \big)  I\big(\tau_v\hat{K}^{\otimes \bk} \big)} \,du\,dv\right|
	\lesssim\varepsilon\int_{\frac{s}{\varepsilon}}^{\frac{t}{\varepsilon}}\int_{0}^{\frac{s}{\varepsilon}} \big(1\wedge|v-u|^{-\beta|\bk|}\big)\,du\,dv\to 0,
	\end{equation*}
	the claim follows at once.
\end{proof}

Next, we show that the contractions vanish as we send $\varepsilon\to 0$. Note that, even though $\{\tau_t \hat{K^i}\}$ are orthonormal, for two different times $s\neq t$, $\tau_s \hat{K}^i$ and $\tau_t \hat{K}^j$ may not be orthogonal for $i\neq j$. If they were, then the proof of the next lemma would be much simpler.
\begin{lemma}\label{lem:fourth_moment_applies}
	Let $\bl\in\N_0^n$ and $t\in[0,T]$. If $|\bl| >\beta^{-1}$, then for $r=1, \dots, |\bl|-1$,
	and any multi-pair $P=\{(a_1, b_1), \dots, (a_r, b_r)\}$,  with both indices of a pair from $\{1, \dots, |\bl|\}$, one has
	$$  \varepsilon\Tr_P \int_{0}^{\frac{t}{\varepsilon}} \tau_u  K^\bl \,du\otimes \int_{0}^{\frac{t}{\varepsilon}} \tau_u  K^\bl  \,du\to 0.$$
\end{lemma}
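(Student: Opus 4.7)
\medskip\noindent\emph{Proof sketch.} The plan is to estimate the $\H^{\otimes(2|\bl|-2r)}$-norm squared of the contraction directly and show it vanishes as $\varepsilon\to 0$. Since $\Tr_P$ is a bounded (partial-trace) operator, it commutes with the Bochner integrals, so the quantity of interest equals
$$F_\varepsilon\define\varepsilon\int_{0}^{t/\varepsilon}\int_{0}^{t/\varepsilon}\Tr_P\bigl((\tau_u\hat{K})^{\otimes\bl}\otimes(\tau_v\hat{K})^{\otimes\bl}\bigr)\,du\,dv.$$
Expanding $\|F_\varepsilon\|^{2}$ as a quadruple integral over $(u_1,v_1,u_2,v_2)\in[0,t/\varepsilon]^4$ and using the product-of-inner-products formula for the tensor Hilbert space, the integrand factorises into exactly $2|\bl|$ primitive inner products $\langle\tau_a\hat{K}^i,\tau_b\hat{K}^j\rangle$. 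Of these, $2r$ arise from the two copies of $\Tr_P$---contributing $r$ factors depending only on $u_1-v_1$ and $r$ only on $u_2-v_2$---while the remaining $2(|\bl|-r)$ arise from the outer pairing of leftover kernels in $\H^{\otimes(2|\bl|-2r)}$ and split into $|\bl|-r$ factors in $u_1-u_2$ and $|\bl|-r$ in $v_1-v_2$.

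Each primitive inner product is bounded by $\hat\Theta(1\wedge|\cdot|^{-\beta})$ via the decay estimate $|\Expec{z_s\otimes z_t}|\leq\hat\Theta(1\wedge|t-s|^{-\beta})$ for the normalised process. Writing $\psi_\gamma(s)\define 1\wedge|s|^{-\gamma}$, this gives
$$\|F_\varepsilon\|^2\lesssim\varepsilon^2\int_{[0,t/\varepsilon]^4}\psi_{r\beta}(u_1-v_1)\,\psi_{r\beta}(u_2-v_2)\,\psi_{(|\bl|-r)\beta}(u_1-u_2)\,\psi_{(|\bl|-r)\beta}(v_1-v_2)\,du_1dv_1du_2dv_2.$$
Translation invariance (shift $u_1\mapsto 0$ and enlarge the remaining domain to $[-t/\varepsilon,t/\varepsilon]^3$) extracts a factor of $t/\varepsilon$ and leaves a triple integral to which one applies the classical convolution bound
$$\int_\R\psi_\alpha(x)\psi_\gamma(x-y)\,dx\lesssim\psi_{\alpha\wedge\gamma\wedge(\alpha+\gamma-1)}(y)\qquad(\alpha+\gamma>1)$$
first in $v_1$ and then in $u_2$; in both applications the exponents sum to $|\bl|\beta>1$, which is precisely where the Hermite-rank hypothesis enters.

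Setting $\mu\define r\beta\wedge(|\bl|-r)\beta\wedge(|\bl|\beta-1)>0$, the triple integral collapses to $\int_{-t/\varepsilon}^{t/\varepsilon}\psi_{2\mu}(v_2)\,dv_2$, which is dominated by $(t/\varepsilon)^{(1-2\mu)_+}\log(e+t/\varepsilon)$. Altogether, $\|F_\varepsilon\|^2\lesssim t^{(2-2\mu)\vee 1}\varepsilon^{(2\mu)\wedge 1}$ up to logs, which tends to zero since $\mu>0$. The main obstacle is the combinatorial bookkeeping that identifies the four classes of factors in $\|F_\varepsilon\|^2$, together with the case analysis (depending on whether each of $r\beta$ and $(|\bl|-r)\beta$ exceeds $1$) in the convolution estimate; once the exponent pattern $(r,r,|\bl|-r,|\bl|-r)$ is laid bare, the Hermite-rank hypothesis $|\bl|>1/\beta$ guarantees a strictly positive margin in every regime.
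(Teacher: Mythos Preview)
Your proof is correct and follows essentially the same approach as the paper: both expand $\|F_\varepsilon\|^2$ as a quadruple integral, bound each primitive inner product by the correlation decay $\hat\Theta(1\wedge|\cdot|^{-\beta})$, and arrive at exactly the same four-factor integrand with exponent pattern $(r,r,|\bl|-r,|\bl|-r)$ in the pairwise differences. The only difference is in how the final integral is estimated: the paper uses the elementary inequality $x^\alpha y^\beta\leq x^{\alpha+\beta}+y^{\alpha+\beta}$ to redistribute exponents before integrating directly, whereas you extract a factor of $t/\varepsilon$ by translation invariance and iterate a convolution bound of the form $\int_\R\psi_\alpha(x)\psi_\gamma(x-y)\,dx\lesssim\psi_{\alpha\wedge\gamma\wedge(\alpha+\gamma-1)}(y)$; both routes are standard and exploit the hypothesis $|\bl|\beta>1$ at the same point.
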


\begin{proof}
	Let $\bk^1, \bk^2$ be the multi-indices obtained from $\bl$ by deleting $\{a_i\}$ and $\{b_i\}$ respectively. We compute
	\begin{align*}
	&\left\| \Tr_P \int_{0}^{\frac{t}{\varepsilon}} \tau_u  K^\bl \,du\otimes \int_{0}^{\frac{t}{\varepsilon}} \tau_u  K^\bl  \,du\right\| ^2  
	=\int_{\left[0,\frac{t}{\varepsilon}\right]^4} \< \Tr_P ( \tau_{u_1}  K^\bl \otimes  \tau_{u_2}  K^\bl ), \Tr_P ( \tau_{u_3}  K^\bl  \otimes  \tau_{u_4}  K^\bl )\>
	\,d^4u\\
	&\lesssim\int_{\left[0,\frac{t}{\varepsilon}\right]^4}\big(1\wedge|u_1-u_2|^{-\beta r}\big)\bigl(1\wedge|u_3-u_4|^{-\beta r }\bigr)
	|\<  \tau_{u_1}  K^{\bk^1} \otimes  \tau_{u_2}  K^{\bk^2}, \tau_{u_3}  K^{\bk^1} \otimes  \tau_{u_4}  K^{\bk^2}\>|
	\,d^4u\\
	&\lesssim\int_{\left[0,\frac{t}{\varepsilon}\right]^4}
	\big(1\wedge|u_1-u_2|^{-\beta r}\big)\big(1\wedge|u_3-u_4|^{-\beta r }\big)
	\big(1\wedge|u_1-u_3|^{-\beta(|\bl|- r )}\big)\big(1\wedge|u_2-u_4|^{-\beta(|\bl|- r)}\big)\,d^4u
	\end{align*}
	by the correlation decay of the kernel $K$. 
	Next, we use the  inequality $x^\alpha y^\beta\leq x^{\alpha+\beta}+y^{\alpha+\beta}$ for $x,y> 0$ and $\alpha,\beta\in\R$ to find 
	\begin{align*}
		&\left\| \Tr_P \int_{0}^{\frac{t}{\varepsilon}} \tau_u  K^\bl \,du\otimes \int_{0}^{\frac{t}{\varepsilon}} \tau_u  K^\bl  \,du\right\| ^2\\
		\lesssim&\int_{\left[0,\frac{t}{\varepsilon}\right]^4}
		\Big(\big(1\wedge|u_1-u_2|^{-\beta|\bl|}\big)+\big(1\wedge|u_1-u_3|^{-\beta|\bl|}\big)\Big)\big(1\wedge|u_3-u_4|^{-\beta r }\big)
		\big(1\wedge|u_2-u_4|^{-\beta(|\bl|- r)}\big)\,d^4u.
	\end{align*}
	We show that each of these summands is $o(\varepsilon^{-2})$ as $\varepsilon\to 0$. Since the arguments are similar, we shall only focus on the first term:
	\begin{align*}
		&\phantom{\lesssim}\int_{\left[0,\frac{t}{\varepsilon}\right]^4}\big(1\wedge|u_1-u_2|^{-\beta|\bl|}\big)\big(1\wedge|u_3-u_4|^{-\beta r}\big)\big(1\wedge|u_2-u_4|^{-\beta(|\bl|- r )}\big)\,d^4u\\
		&=\int_0^{\f t\epsilon} du_4\int_{-u_4}^{\f t\epsilon -u_4} \big(1\wedge|u_3|^{-\beta(|\bl|- r )}\big)du_3 \int_{-u_4}^{\f t\epsilon -u_4} \big(1\wedge|u_2|^{-\beta r}\big)du_2 \int_{-u_2+u_4}^{\f t\epsilon-u_2+u_4} \big(1\wedge|u_1|^{-\beta|\bl|}\big)du_1\\
		&\lesssim 
		\left(\f {t} \epsilon \right)^{2-\beta|\bl|}
		\int_{-\f t \epsilon}^{\f {2t}\epsilon} \big(1\wedge|u_3|^{-\beta(|\bl|- r )}\big)du_3 \int_{-\f t \epsilon}^{\f {2t}\epsilon} \big(1\wedge|u_2|^{-\beta r}\big)du_2  
		=\CO\left(\varepsilon^{2\beta|\bl|-4}\right).		
	\end{align*}	
	Thus, the claim follows since $\beta|\bl| > 1$ by assumption. 
\end{proof}

\begin{proposition}\label{prop:main_fdd}
  Let $0\leq t_1<\cdots<t_m\leq T$ and $\bl^1,\dots,\bl^k\in\N_0^n$. Then
  \begin{equation*}
	\begin{pmatrix}
	  \sqrt{\varepsilon}\int_0^{\frac{t_1}{\varepsilon}} H_{\bl^1}\left(z_u \right)\,du\\
	  \vdots\\
	  \sqrt{\varepsilon}\int_0^{\frac{t_m}{\varepsilon}} H_{\bl^1}\left(z_u\right)\,du
	\end{pmatrix}\oplus\cdots\oplus\begin{pmatrix}
	  \sqrt{\varepsilon}\int_0^{\frac{t_1}{\varepsilon}} H_{\bl^k}\left(z_u\right)\,du\\
	  \vdots\\
	  \sqrt{\varepsilon}\int_0^{\frac{t_m}{\varepsilon}} H_{\bl^k}\left(z_u\right)\,du
	\end{pmatrix}\Rightarrow\CN(0,\Lambda),
  \end{equation*} 
  where
  \begin{align*}
	\Lambda&\define\begin{pmatrix}
	  \Lambda^{1,1}&\cdots&\Lambda^{1,k}\\
	  \vdots & \ddots & \vdots\\
	  \Lambda^{k,1} & \cdots & \Lambda^{k,k}
	\end{pmatrix},\\
	\Lambda^{i,j}_{\alpha,\beta}&\define(t_\alpha\wedge t_\beta) \delta_{|\bl^i|,|\bl^j|}\int_0^\infty \Expec { H_{ \bl^i  }\left(z_u\right)  H_{ \bl^j  }\left(z_0\right)+ H_{ \bl^i  }\left(z_0\right) H_{ \bl^j  }\left(z_u\right)} \,du,\qquad \alpha,\beta=1,\dots,m.
  \end{align*}
\end{proposition}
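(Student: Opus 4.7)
The plan is a clean reduction to the fourth moment theorem (\Cref{fourth-moment-theorem}), using the two technical lemmas just established. First, by \Cref{lem-decomposition} and the linearity of the multiple Wiener integral, I would rewrite every entry of the joint vector as a single multiple integral:
\[
\sqrt{\varepsilon}\int_0^{t_\alpha/\varepsilon}\! H_{\bl^i}(z_u)\,du \;=\; I\bigl(f^\varepsilon_{i,\alpha}\bigr),\qquad f^\varepsilon_{i,\alpha}\;\define\;\sqrt{\varepsilon}\int_0^{t_\alpha/\varepsilon}\!(\tau_u\hat K)^{\otimes \bl^i}\,du\;\in\;\H^{\otimes |\bl^i|}.
\]
Enumerating the pairs $(i,\alpha)$ as a single list (after reordering so that the chaos levels $|\bl^i|$ are non-decreasing) casts the assertion in the precise format of \Cref{fourth-moment-theorem}: a finite family of multiple Wiener integrals, each of fixed chaos order, whose joint limit in distribution is to be identified.

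Next I would verify the two hypotheses of that theorem. The pairwise covariance convergence
\[
\lim_{\varepsilon\to 0}\E\bigl[I(f^\varepsilon_{i,\alpha})\,I(f^\varepsilon_{j,\beta})\bigr] \;=\; \Lambda^{i,j}_{\alpha,\beta}
\]
is exactly the content of \Cref{lem:limit} applied with $\bl=\bl^i$, $\bk=\bl^j$, $t=t_\alpha$, $s=t_\beta$; the Kronecker factor $\delta_{|\bl^i|,|\bl^j|}$ encodes the standard orthogonality between chaoses of distinct order recorded in \eqref{eq:isometry}. For the vanishing of the intermediate self-contractions, I would fix one component $(i,\alpha)$ together with any complete pairing $P$ of cardinality between $1$ and $|\bl^i|-1$; then the convergence $\Tr_P\bigl(f^\varepsilon_{i,\alpha}\otimes f^\varepsilon_{i,\alpha}\bigr)\to 0$ is precisely \Cref{lem:fourth_moment_applies} applied with $\bl=\bl^i$ and $t=t_\alpha$. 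Invoking \Cref{fourth-moment-theorem} then delivers joint convergence to $\mathcal N(0,\Lambda)$, which is the claim.

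I expect no serious analytic obstacle, since the heavy lifting has already been done in the two preceding lemmas. The one point worth checking carefully is purely organizational: the family $\{f^\varepsilon_{i,\alpha}\}$ is doubly indexed, so one must confirm that the hypothesis of \Cref{fourth-moment-theorem}—demanding vanishing of self-contractions only—genuinely suffices, rather than also requiring cross-contractions between $f^\varepsilon_{i,\alpha}$ and $f^\varepsilon_{j,\beta}$ for $(i,\alpha)\ne(j,\beta)$. Since the fourth moment criterion asks only for the diagonal case, \Cref{lem:fourth_moment_applies} (likewise stated at a single time) is exactly what is needed, and no further estimate has to be produced.
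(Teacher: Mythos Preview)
Your proposal is correct and mirrors the paper's own proof almost verbatim: rewrite each entry via \Cref{lem-decomposition} as a multiple Wiener integral, verify the covariance limits through \Cref{lem:limit} and the vanishing of the intermediate self-contractions through \Cref{lem:fourth_moment_applies}, then invoke \Cref{fourth-moment-theorem}. Your observation that only diagonal self-contractions are required (not cross-contractions) is exactly right and matches the fourth moment theorem as stated.
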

\begin{proof}
Recall  $H_\bl (z_u) =I\big(\tau_u\hat{K}^{\otimes \bl} \big)$ from \cref{lem-decomposition}. Now \cref{lem:limit,lem:fourth_moment_applies} allow to conclude the claim by  an application of \cref{fourth-moment-theorem}.
\end{proof}
\begin{remark}
\label{polynomial-convergence}
This together with the lemmas below immediately leads to parts 1 and 2 of \cref{thm-A}  when $G_k$ are polynomial functions:
 $X^\epsilon\to \Upsilon W$ in finite-dimensional distributions.   The second claim follows for polynomials from the H\"older bounds proved below.
\end{remark}

The following $L^2$ estimate, which we will actually lift to an $L^p$ bound momentarily, plays a key r\^ole in proving the weak convergence in H\"older topology:
\begin{lemma}[$L^2$ H\"older bound]\label{lem:l2_bound}
   Let $0\leq s\leq t\leq T$. Let $G \in L^2(\R^n, N(0,\Sigma))$ be a real-valued function satisfying the fast chaos decay assumption with parameter $(2n-1)\hat{\Theta}$. If $\herm(G) > \beta^{-1}$, then 
  \begin{equation*}
	\left\|\sqrt{\varepsilon}\int_{\frac{s}{\varepsilon}}^{\frac{t}{\varepsilon}} G(y_r)\,dr\right\|_{L^2}\lesssim |t-s|^{\frac12}.
  \end{equation*}
\end{lemma}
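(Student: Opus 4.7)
The plan is to expand $G(y_r)$ in the orthonormal Hermite basis associated with the normalized process $z$, use chaos orthogonality together with the diagram formula to bound the covariance $\E[G(y_u)G(y_v)]$ by a power series in the maximal cross-correlation of $z_u$ and $z_v$, integrate using the algebraic correlation decay, and finally close the estimate via the fast chaos decay condition.

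Concretely, I would start from
\begin{equation*}
  \left\|\sqrt{\varepsilon}\int_{s/\varepsilon}^{t/\varepsilon} G(y_r)\,dr\right\|_{L^2}^2
  = \varepsilon\iint_{[s/\varepsilon,t/\varepsilon]^2}\E\big[G(y_u)G(y_v)\big]\,du\,dv
\end{equation*}
and plug in the expansion $G(y_r)=\sum_\bl c_\bl H_\bl(z_r)$ from \eqref{eq:normalized_hermite}. Chaos orthogonality kills all pairs $(\bl,\bk)$ with $|\bl|\neq|\bk|$, so only homogeneous blocks $|\bl|=|\bk|=m$ contribute. For each such block I apply \cref{thm-diagramm-formulae} to $X=z_u$, $Y=z_v$ (whose components are pairwise independent since $\E[z_0\otimes z_0]=\id$) together with the covariance bound $\max_{i,j}|\E[z_u^i z_v^j]|\leq\hat\Theta(1\wedge|u-v|^{-\beta})$ inherited from $z\in\vol_n(\beta,\hat\Theta)$. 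This yields
\begin{equation*}
  \big|\E[G(y_u)G(y_v)]\big|\;\lesssim\;\sum_{m\geq\herm(G)}\big((2n-1)\hat\Theta\big)^m\big(1\wedge|u-v|^{-\beta m}\big)\,a_m^2,\qquad a_m\define\sum_{|\bl|=m}|c_\bl|\sqrt{\bl!}.
\end{equation*}

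Next I carry out the double integration. Since $\herm(G)>\beta^{-1}$, every summand satisfies $\beta m>1$, so
\begin{equation*}
  \iint_{[s/\varepsilon,t/\varepsilon]^2}\big(1\wedge|u-v|^{-\beta m}\big)\,du\,dv\;\leq\;\frac{t-s}{\varepsilon}\left(2+\frac{2}{\beta m-1}\right)\;\leq\;\frac{t-s}{\varepsilon}\,C_{\beta,\herm(G)},
\end{equation*}
with $C_{\beta,\herm(G)}$ independent of $m$. Multiplying by $\varepsilon$ gives
\begin{equation*}
  \left\|\sqrt{\varepsilon}\int_{s/\varepsilon}^{t/\varepsilon}G(y_r)\,dr\right\|_{L^2}^2\;\lesssim\;(t-s)\sum_{m\geq\herm(G)}\big((2n-1)\hat\Theta\big)^m a_m^2,
\end{equation*}
so the desired $\sqrt{|t-s|}$ bound reduces to proving that the series in $m$ is finite.

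The final and delicate step is to dominate $\sum_m\big((2n-1)\hat\Theta\big)^m a_m^2$ by the fast chaos decay hypothesis. Writing out squares,
\begin{equation*}
  \sum_{m}\big((2n-1)\hat\Theta\big)^m a_m^2\;=\;\sum_{m}\Big(\sum_{|\bl|=m}|c_\bl|\sqrt{\bl!}\,\big((2n-1)\hat\Theta\big)^{m/2}\Big)^{\!2}\;\leq\;\Big(\sum_{\bl}|c_\bl|\sqrt{\bl!}\,\big((2n-1)\hat\Theta\big)^{|\bl|/2}\Big)^{\!2},
\end{equation*}
and the last sum is controlled by the fast chaos decay assumption with parameter $(2n-1)\hat\Theta$ (after absorbing the constant unit in the $\lesssim$ from the diagram formula). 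The main obstacle is precisely this last estimate: matching the $(2n-1)\hat\Theta$ exponential growth coming from the diagram formula and the Volterra covariance bound with the weight in the fast chaos decay condition, and verifying that the finite-dimensional constants $|\Gamma_\bl|\leq\sqrt{\bl!}(n-1)^{|\bl|/2}$ do not degrade the argument. Everything else is routine Cauchy--Schwarz and a standard power-law integral estimate.
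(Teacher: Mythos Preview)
Your argument is correct and coincides with the paper's proof: expand via the Hermite series \eqref{eq:normalized_hermite}, bound $\E[H_{\bl}(z_u)H_{\bk}(z_v)]$ through \cref{thm-diagramm-formulae} and the covariance decay $|\E[z_u\otimes z_v]|\leq\hat\Theta(1\wedge|u-v|^{-\beta})$, integrate using $\beta\,\herm(G)>1$, and close with the fast chaos decay hypothesis. Your grouping into homogeneity levels $m$ with the auxiliary quantities $a_m$ is a cosmetic repackaging of the paper's double sum over $(\bl,\bk)$ with $|\bl|=|\bk|$; the estimates are line-for-line the same.

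One comment on your closing remark: the ``absorb the constant unit into the $\lesssim$'' step is not actually valid---a multiplicative absolute constant cannot upgrade $\big((2n-1)\hat\Theta-1\big)^{|\bl|/2}$ to $\big((2n-1)\hat\Theta\big)^{|\bl|/2}$. This is, however, the same off-by-one present in the paper (compare the lemma's stated parameter $(2n-1)\hat\Theta$ with \cref{thm-A}'s parameter $\hat\Theta(2n-1)+1$); the intended hypothesis is the latter, under which your final series is finite by definition. Your worry about the $|\Gamma_{\bl}|$ constant is unfounded: that bound is already absorbed into the second display of \cref{thm-diagramm-formulae}, which you invoked directly.
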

\begin{proof}
  Owing to stationarity of $(y_t)_{t\geq 0}$, there is no loss of generality in assuming $s=0$. We expand the square as double integral and make use of the expansion \eqref{eq:normalized_hermite}:
  \begin{align}
	\Expec{\left(\int_0^{\frac{t}{\varepsilon}}G(y_u)\,du\right)^2}
	=\sum_{\substack{\bk,\bl\in\N_0^n \\ \vert \bk\vert , | \bl | \geq \herm(G)}} c_{\bk} c_{\bl} \int_0^{\frac{t}{\varepsilon}}\int_0^{\frac{t}{\varepsilon}} \Expec{ H_{\bk}(z_u) H_{\bl}(z_v) }\,du\,dv.\label{eq:lipschitz_estimate}
  \end{align}
  The series is absolutely summable,  thus one can exchange the order of summation and integration.
Recall that
  \begin{equation}\label{eq:normalized_decay}
	 \big|\expec{z_u\otimes z_v }\big| \leq \hat{\Theta}  \left(1 \wedge \vert u-v\vert^{-\beta}\right). 
  \end{equation} 
  Combining this with \cref{thm-diagramm-formulae}, we find 
  \begin{align*}
	\expec{ H_{\bk}(z_u) H_{\bl}(z_v)} \leq  \delta_{|\bk|,|\bl|}\sqrt{\bk !  \bl !} \hat{\Theta}^{ | \bk |} (2n-1)^{\vert \bk \vert}\left( 1 \wedge \vert u-v\vert^{- |\bk|\beta} \right).
  \end{align*}
  Inserting this estimate back into \eqref{eq:lipschitz_estimate}, we get
  \begin{align*}
	&\phantom{\leq}\Expec{\left(\int_0^{\frac{t}{\varepsilon}}G(y_u)\,du\right)^2} \\
	&\leq \sum_{\substack{\bk,\bl\in\N_0^n \\ \vert \bk\vert , | \bl | \geq \herm(G)}} | c_{\bl}| | c_{\bk}| \sqrt{ \bl !  \bk!} \hat{\Theta}^{ \f{| \bl| + | \bk|}{2}} ( 2n-1)^{ \f{| \bl| + | \bk|}{2}} \int_0^{\frac{t}{\varepsilon}}\int_0^{\frac{t}{\varepsilon}} \left( 1 \wedge \vert u-v\vert^{- \herm(G)\beta} \right)\,du\,dv\lesssim\frac{t}{\varepsilon},
  \end{align*}
  since $\herm(G) > \beta^{-1}$ and  $\sum_{\bl \in\N_0^n}  \vert c_{\bl} \vert  \hat{\Theta}^{ \f { | \bl |} {2}}  (2n-1)^{ \f {\vert \bl \vert}{ 2}} \sqrt{\bl !} < \infty$ by assumption. This concludes the proof.
\end{proof}
The following estimate will be used in \cref{proof-A}:
\begin{lemma}[$L^p$ H\"older bound]\label{lem:lp_holder}
 Let $0\leq s\leq t\leq T$. Let $G \in L^2(\R^n, N(0,\Sigma))$. If $\herm(G) > \beta^{-1}$ and $G$ satisfies the fast chaos decay assumption with parameter $(2n-1)(p-1)\hat{\Theta}+1$,  where $p>2$, then
  \begin{equation*}
	\left\|\sqrt{\varepsilon}\int_{\frac{s}{\varepsilon}}^{\frac{t}{\varepsilon}} G(y_r)\,dr\right\|_{L^p}\lesssim |t-s|^{\frac12}.
  \end{equation*}
\end{lemma}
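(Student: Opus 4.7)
The plan is to reduce the $L^p$ bound to the already established $L^2$ control by exploiting the hypercontractivity of the Ornstein--Uhlenbeck semigroup (equivalently, Nelson's hypercontractive bound on fixed Wiener chaoses). Recall that any element $F$ of the $k$-th Wiener chaos satisfies $\|F\|_{L^p} \leq (p-1)^{k/2} \|F\|_{L^2}$ for every $p \geq 2$. Since each summand in the Hermite expansion of $G(y_r)$ lives in a fixed chaos, this will convert the $L^p$ bound into a weighted sum of $L^2$ bounds, with the weight $(p-1)^{|\bl|/2}$ exactly absorbed by the stronger fast chaos decay hypothesis.

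Concretely, I would first expand $G(y_r) = \sum_{\bl} c_{\bl} H_{\bl}(z_r)$ as in \eqref{eq:normalized_hermite} and define
\begin{equation*}
F_{\bl}^{\varepsilon} \define \sqrt{\varepsilon} \int_{s/\varepsilon}^{t/\varepsilon} H_{\bl}(z_r)\, dr = I\!\left(\sqrt{\varepsilon} \int_{s/\varepsilon}^{t/\varepsilon} (\tau_r \hat{K})^{\otimes \bl}\, dr\right),
\end{equation*}
which by \cref{lem-decomposition} lies in the $|\bl|$-th Wiener chaos. Then I apply Minkowski's inequality to get $\|\sqrt{\varepsilon}\int_{s/\varepsilon}^{t/\varepsilon} G(y_r)\,dr\|_{L^p} \leq \sum_{\bl} |c_{\bl}|\, \|F_{\bl}^\varepsilon\|_{L^p}$, and bound each term by hypercontractivity as $\|F_{\bl}^\varepsilon\|_{L^p} \leq (p-1)^{|\bl|/2}\|F_{\bl}^\varepsilon\|_{L^2}$. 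The $L^2$ norm is estimated exactly as in the proof of \cref{lem:l2_bound}, but keeping track of the single multi-index: using the diagram formula and the correlation decay \eqref{eq:normalized_decay} one finds
\begin{equation*}
\|F_{\bl}^\varepsilon\|_{L^2}^2 \lesssim \bl!\, \bigl(\hat{\Theta}(2n-1)\bigr)^{|\bl|} |t-s|,
\end{equation*}
where the finiteness of the time integral uses $|\bl| \geq \herm(G) > \beta^{-1}$, so that $(1 \wedge |u-v|^{-|\bl|\beta})$ is integrable.

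Assembling these ingredients yields
\begin{equation*}
\left\|\sqrt{\varepsilon}\int_{s/\varepsilon}^{t/\varepsilon} G(y_r)\,dr\right\|_{L^p} \lesssim |t-s|^{1/2} \sum_{\bl \in \N_0^n} |c_{\bl}| \sqrt{\bl!}\, \bigl((2n-1)(p-1)\hat{\Theta}\bigr)^{|\bl|/2},
\end{equation*}
and the series on the right is finite precisely because of the fast chaos decay hypothesis with parameter $(2n-1)(p-1)\hat{\Theta}+1$, which is exactly what appears in \cref{def-Hermite-rank}\,\ref{it:fast_decay} with this choice of parameter. The main (mild) obstacle is simply to track the numerical constants carefully so that the exponential weight produced by hypercontractivity pairs cleanly with the per-chaos $L^2$ constant, giving the stated fast chaos decay parameter rather than something larger; everything else is an immediate adaptation of \cref{lem:l2_bound}.
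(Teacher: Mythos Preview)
Your proposal is correct and follows essentially the same route as the paper: expand in Hermite polynomials, apply Minkowski's inequality, use hypercontractivity to pass from $L^p$ to $L^2$ on each chaos, and then invoke the per-chaos $L^2$ bound from the proof of \cref{lem:l2_bound} together with the fast chaos decay hypothesis to sum. The paper's argument is line-for-line the same, differing only in not introducing the intermediate notation $F_{\bl}^\varepsilon$.
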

\begin{proof}
  As in the proof of \cref{lem:l2_bound}, we may assume $s=0$ without any loss of generality. Recall that, by Gaussian hypercontractivity, $\|X\|_{L^p}\leq (p-1)^{\frac{|\bl|}{2}}\|X\|_{L^2}$ for any scalar random variable in the $|\bl|^\textup{th}$ Wiener chaos. It follows that
  \begin{align} 
  \left\|\int_{0}^{\frac{t}{\varepsilon}} G(y_r)\,dr\right\|_{L^p}&\leq\sum_{\bl\in\N_0^n}|c_{\bl}| (p-1)^{\frac{|\bl|}{2}} \left\|   \int_{0}^{\frac{t}{\varepsilon}} H_{\bl}(z_r) \,dr\right\|_{L^2}\nonumber\\
  &\lesssim \sqrt{\f{t}{\epsilon}} \sum_{\bl\in\N_0^n,}|c_{\bl}| (p-1)^{\frac{|\bl|}{2}} \sqrt{\bl!} \hat{\Theta} ^{\f{| \bl | }{2}}(2n-1)^{\f {| \bl | } {2}}.  \label{eq:lp_hypercontractivity}
  \end{align}  
The statement follows since the sum is finite by 
 the fast chaos decay assumption, c.f. \cref{def-Hermite-rank} \ref{it:fast_decay}.
\end{proof}

\subsection{Lifted Functional Central Limit Theorem}

The aim of this section is to prove the convergence of the iterated integrals. We begin with a definition inspired by \cite{Jacod-Shiryaev}. Let us write $\CF_t\define\sigma(W_s,s\leq t)$ for the filtration generated by the Wiener process driving $y$ through \eqref{eq:kernel}.

\begin{definition}\label{def:cond-dec-con}
We say that a function $G: \R^{n} \to \R$ with $ \E \left[ G(y_0) \right] =0$ satisfies the \textit{conditional decay condition} (with respect to $y$) if 
$$ \int_0^{\infty} \big\Vert \E \left[ G(y_s) | \mathcal{F}_0 \right] \big\Vert_{L^2(\Omega)}\, ds < \infty.$$
\end{definition}

As we shall see in the sequel, if $y\in\vol_n(\beta,\Theta)$, then any centered $G \in L^2(\R^n,N(0,\Sigma))$ with $\herm(G)  >2\beta^{-1}$ falls in the regime of \cref{def:cond-dec-con}.

Next, we recall a stability result for stochastic integrals, which is a weaker version of \cite[Theorem 2.2]{Kurtz-Protter}:
\begin{proposition}\label{prop:kurtz_protter}
	For each $k\in\N$, let $X^k$ and $M^k$ be stochastic processes adapted to a filtration $\CF^k$. If $(X^k,M^k)\Rightarrow(X,M)$ weakly in $\C\big([0,T],\R^{2N}\big)$ and, for each $k\in\N$, $M^k$ is an $\CF^k$ local martingale with $\sup_{k\in\N}\|M^k_T\|_{L^2(\Omega)}<\infty$, then
	\begin{equation*}
		\left(X^k,M^k,\int_0^\cdot X^k_s\,\otimes dM_s^k\right)\Rightarrow\left(X,M,\int_0^\cdot X_s\,\otimes dM_s\right)\qquad\text{in}\quad\C\Big([0,T],\R^{N}\oplus\R^N\oplus\R^N\otimes \R^N\Big).
	\end{equation*}
\end{proposition}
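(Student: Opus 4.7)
The plan is to recognize this as a direct specialization of the Kurtz--Protter stability theorem \cite[Theorem 2.2]{Kurtz-Protter}, so the task reduces to invoking that result after verifying its uniformly controlled variation (UCV) hypothesis for the sequence $(M^k)_{k\in\N}$. Since each $M^k$ is a continuous local martingale, its Doob--Meyer decomposition has trivial bounded-variation part, and UCV collapses to the single estimate
\begin{equation*}
\sup_{k\in\N}\E\bigl[\langle M^k\rangle_T\bigr]<\infty.
\end{equation*}

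To deduce this bound from the hypothesis $\sup_k\|M^k_T\|_{L^2}<\infty$, I would localize each $M^k$ by a sequence $(\tau_n^k)_n$ so that $M^{k,n}\define M^k_{\cdot\wedge\tau_n^k}$ is a bounded martingale, apply the martingale isometry $\E[(M^{k,n}_T)^2]=\E[\langle M^{k,n}\rangle_T]+\E[(M^{k,n}_0)^2]$, and then pass to the limit $n\to\infty$ using monotone convergence on the right-hand side and Fatou's lemma on the left. This yields $\E[\langle M^k\rangle_T]\le\E[(M^k_T)^2]$, which is uniform in $k$; as a by-product, each $M^k$ is in fact a true square-integrable martingale. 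With UCV in hand together with the assumed joint weak convergence $(X^k,M^k)\Rightarrow(X,M)$ in $\C([0,T],\R^{2N})$, \cite[Theorem 2.2]{Kurtz-Protter} immediately delivers the claimed convergence of the stochastic integrals, and the weak limit is automatically a continuous semimartingale as part of that statement.

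The only cosmetic issue is that \cite{Kurtz-Protter} is phrased for scalar integrands and integrators, whereas here the integral $\int X^k_s\otimes dM^k_s$ is $\R^N\otimes\R^N$-valued. This is routine bookkeeping: the $(i,j)$-entry equals $\int X^{k,i}_s\,dM^{k,j}_s$, and joint convergence of all such entries follows from applying the scalar result to the enlarged tuple $(X^{k,1},\dots,X^{k,N},M^{k,1},\dots,M^{k,N})$ together with the continuous mapping theorem. I do not foresee any genuine obstacle in the argument; the only step requiring a few lines of work is the $L^2$-to-UCV passage sketched above.
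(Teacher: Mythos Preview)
Your proposal is correct and matches the paper's treatment exactly: the paper does not give a self-contained proof but simply states the proposition as a specialization of \cite[Theorem 2.2]{Kurtz-Protter}, and in an accompanying remark observes that $\sup_k\|M^k_T\|_{L^2}<\infty$ is equivalent to the UCV condition $\sup_k\E[\langle M^k\rangle_T]<\infty$. Your localization argument and the component-wise reduction for the $\R^N\otimes\R^N$-valued integral are the natural way to spell this out.
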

\begin{remark}
	The condition $\sup_{k\in\N}\|M^k_T\|_{L^2(\Omega)}<\infty$ is of course equivalent to Kurtz-Protter's famous \textit{uniformly controlled variation} (UCV) condition
	\begin{equation*}
		\sup_{k\in\N}\Expec{\Braket{M^k}_T}<\infty,
	\end{equation*}
	where $\braket{\cdot}$ denotes the quadratic variation.
\end{remark}

\begin{lemma}\label{lem:decomposition}
Let $y$ be any stationary stochastic process which is ergodic under the canonical time-shift. 
  For each $k \in \{1, \dots, N\}$, fix a function $G_k : \R^{n} \to \R$ satisfying the conditional decay condition with respect to $y$, the fast chaos decay assumption with parameter $(2n-1)\hat{\Theta}$, and having Hermite rank $\herm(G_k) > \beta^{-1}$. Set 
  \begin{equation*}
X^{\epsilon}_t = \left(\sqrt{\epsilon} \int_0^{\f t \epsilon} G_1(y_s) \,ds , \dots,  \sqrt{\epsilon} \int_0^{\f t \epsilon} G_N(y_s) \,ds\right),\qquad t\in[0,T].
  \end{equation*}
  Then we have the decomposition 
  $$ X^{\epsilon}_t = M^{\epsilon}_t + ( Z^{\epsilon}_0 - Z^{\epsilon}_{t}),$$
  where the respective $j^{\text{th}}$-components are defined by: 
  \begin{align*}
		M^{j,\epsilon}_t &\define\sqrt{\epsilon} \int_0^{\infty} \E\left[ G_j(y_s) | \mathcal{F}_{\f t \epsilon} \right] \,ds - \sqrt{\epsilon} \int_0^{\infty} \E\left[ G_j(y_s) | \mathcal{F}_0 \right]\,ds, \\
		Z^{j,\epsilon}_t &\define\sqrt{\epsilon} \int_{\f t \epsilon}^{\infty} \E\left[ G_j(y_s) | \mathcal{F}_{\f t \epsilon} \right]\, ds.
  \end{align*}
  In addition, the following hold:
  \begin{itemize}
  	\item For each $\varepsilon>0$, the process $M^\varepsilon$ is a martingale with respect to the rescaled filtration $\big(\mathcal{F}_{\f{ t}{\epsilon}}\big)_{t\geq 0}$.
  	\item We have $\sup_{\varepsilon\in(0,1]}\big\|M^\varepsilon_t\big\|_{L^2(\Omega)}<\infty$ for each $t\geq 0$.
  	\item For each $t\in [0,T]$, we have $\|Z^\varepsilon_t\|_{L^2(\Omega)}\to 0$.
  \end{itemize}
  
\end{lemma}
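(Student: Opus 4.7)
My plan is to verify the decomposition directly by an algebraic manipulation and then extract the three listed properties from it, combined with the a priori $L^2$ bound of \cref{lem:l2_bound}. The one structural fact I use throughout is that, because the kernel $K$ vanishes on $(-\infty,0)$, the Volterra process $y_s$ is $\mathcal{F}_s$-measurable; in particular $\E[G_j(y_s)\mid\mathcal{F}_{t/\epsilon}] = G_j(y_s)$ whenever $s \leq t/\epsilon$.

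With this, the decomposition is essentially by inspection: splitting $\int_0^\infty = \int_0^{t/\epsilon} + \int_{t/\epsilon}^\infty$ in the definition of $M^{j,\epsilon}_t$ identifies the tail with $Z^{j,\epsilon}_t$, while the head---thanks to the measurability observation---collapses to $\sqrt{\epsilon}\int_0^{t/\epsilon} G_j(y_s)\,ds = X^{j,\epsilon}_t$, and the $\mathcal{F}_0$-term is exactly $Z^{j,\epsilon}_0$. For the martingale property on $(\mathcal{F}_{t/\epsilon})_{t\geq 0}$, for $u \leq t$ I would apply the tower property inside the $s$-integral defining $M^{j,\epsilon}_t$. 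Interchanging $\int_0^\infty ds$ with $\E[\,\cdot\mid\mathcal{F}_{u/\epsilon}]$ is licensed by Fubini, which applies because the conditional decay condition furnishes exactly the $L^1(ds)$-integrability of $s\mapsto\|\E[G_j(y_s)\mid\mathcal{F}_0]\|_{L^2}$, and by (joint) stationarity the same integrability holds with any $\mathcal{F}_{t/\epsilon}$ in place of $\mathcal{F}_0$.

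For the remainder, I would change variables $s \mapsto s + t/\epsilon$ in $Z^{j,\epsilon}_t$ and invoke the joint stationarity of $(y,\mathcal{F}_\cdot)$ under time-shifts---which is legitimate because the two-sided Wiener structure and the Volterra representation \eqref{eq:kernel} transport the shift simultaneously on the filtration and on the process---to obtain $\|\E[G_j(y_{s+t/\epsilon})\mid\mathcal{F}_{t/\epsilon}]\|_{L^2} = \|\E[G_j(y_s)\mid\mathcal{F}_0]\|_{L^2}$. Hence
$$\|Z^{j,\epsilon}_t\|_{L^2} \leq \sqrt{\epsilon}\int_0^\infty \big\|\E[G_j(y_s)\mid\mathcal{F}_0]\big\|_{L^2}\,ds \xrightarrow{\epsilon\to 0} 0,$$
and the uniform $L^2$ bound on $M^\epsilon_t$ follows from the triangle inequality $\|M^{j,\epsilon}_t\|_{L^2} \leq \|X^{j,\epsilon}_t\|_{L^2} + \|Z^{j,\epsilon}_0\|_{L^2} + \|Z^{j,\epsilon}_t\|_{L^2}$, where the first summand is $O(\sqrt{t})$ by \cref{lem:l2_bound} and the latter two tend to zero (and are in any case bounded by a constant independent of $\epsilon\in(0,1]$). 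The only genuinely subtle point is the joint stationarity of $(y,\mathcal{F}_\cdot)$ invoked above; everything else is bookkeeping.
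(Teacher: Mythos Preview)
Your proposal is correct and follows essentially the same route as the paper: both use the conditional decay condition together with stationarity (your ``joint stationarity of $(y,\mathcal{F}_\cdot)$'' is exactly the paper's ``shift invariance of $y$'') and Minkowski's inequality to bound $\|Z^{j,\varepsilon}_t\|_{L^2}$, and both combine this with \cref{lem:l2_bound} via the triangle inequality to get the uniform $L^2$ bound on $M^\varepsilon_t$. If anything, you are slightly more explicit than the paper, which does not spell out the algebraic verification of the decomposition or the tower-property argument for the martingale claim.
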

\begin{proof}
  Due to the conditional decay condition, the processes $M^{\epsilon}$ and $Z^{\epsilon}$ are well defined and, in particular, $(M^{\epsilon}_t)_{\varepsilon\in(0,1]}$ is uniformly bounded in $L^2$ for each $t\in[0,T]$. Indeed, splitting the first term and by the shift invariance of $y$ we have
  \begin{equation*}
  	\big\|M^{j,\varepsilon}_t\big\|_{L^2(\Omega)}\leq\sqrt{\varepsilon}\left\|\int_0^{\frac{t}{\varepsilon}}G_j(y_s)\,ds\right\|_{L^2(\Omega)}+2\sqrt{\varepsilon}\int_0^{\infty} \big\|\E\left[ G_j(y_s) | \mathcal{F}_0 \right]\big\|_{L^2(\Omega)}\,ds.
  \end{equation*}
  By \cref{lem:l2_bound} and the conditional decay condition, both terms on the right-hand side are uniformly bounded in $\varepsilon\in(0,1]$. The fact that $\|Z^\varepsilon_t\|_{L^2(\Omega)}\to 0$ for each $t\in[0,T]$ follows from the stationarity of $(y_t)_{t\geq0}$ in combination with the conditional decay condition on $G$ and Minkowski's integral inequality:
  \begin{equation*}
  	\|Z^{j,\varepsilon}_t\|_{L^2(\Omega)}=\sqrt{\varepsilon}\left\|\int_0^\infty\Expec{G_j(y_s)|\CF_0}\,ds\right\|_{L^2(\Omega)}\leq\sqrt{\varepsilon}\int_0^\infty\big\|\Expec{G_j(y_s)|\CF_0}\big\|_{L^2(\Omega)}\,ds\to 0.
  \end{equation*}
\end{proof}

We also need the following standard result, see e.g. \cite[Proposition 1.2]{Aldous1989}:
\begin{proposition}\label{prop:aldous}
	Let $(M^k)_{k\in\N}$ be a sequence of martingales. Suppose that:
	\begin{itemize}
		\item There is a continuous martingale $M$ such $M^k\fdd M$.
		\item For each $t\in [0,T]$, $\big(M_t^k\big)_{k\in\N}$ is uniformly integrable.
	\end{itemize}
	Then $M^k\Rightarrow M$ weakly in $\C\big([0,T],\R^N\big)$.
\end{proposition}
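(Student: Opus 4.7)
The plan is to combine the hypothesised finite-dimensional convergence with a tightness argument to obtain weak convergence in $\C\bigl([0,T],\R^N\bigr)$. By Prokhorov's theorem, it suffices to prove that the sequence $(M^k)_{k \in \N}$ is tight in $\C([0,T],\R^N)$. Since the candidate limit $M$ is continuous, it is actually enough to establish tightness in the Skorokhod space $D([0,T],\R^N)$: f.d.d.~convergence to a continuous limit forces any subsequential weak limit to concentrate on $\C$, and convergence in $D$ to a continuous limit automatically upgrades to convergence in $\C$.

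For tightness in $D([0,T],\R^N)$, I would appeal to Aldous's criterion, which reduces the task to verifying two conditions: (a) for every $t \in [0,T]$, the family $\{M^k_t : k \in \N\}$ is tight in $\R^N$; and (b) for every $\eta > 0$,
\begin{equation*}
\lim_{\delta \downarrow 0}\, \limsup_{k \to \infty}\, \sup_{\tau_k \leq \sigma_k \leq (\tau_k + \delta) \wedge T} P\bigl(\bigl|M^k_{\sigma_k} - M^k_{\tau_k}\bigr| > \eta\bigr) = 0,
\end{equation*}
with the supremum taken over pairs of $\mathcal{F}^k$-stopping times. Condition (a) is immediate from the hypothesised uniform integrability of $\bigl(M^k_t\bigr)_{k \in \N}$, since uniform integrability of real-valued random variables entails tightness.

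The main obstacle is condition (b): we only assume uniform integrability of the one-dimensional marginals, not uniform $L^2$-boundedness, so Doob's $L^2$-maximal inequality cannot be applied naively to $M^k$ itself. The idea is to exploit the martingale structure via Lenglart's domination inequality or, equivalently, via a truncation argument in which $M^k_T$ is split as $M^k_T\1_{|M^k_T|\leq R} + M^k_T\1_{|M^k_T|>R}$ and reassembled into two auxiliary martingales through conditional expectations on $\mathcal{F}^k_\cdot$. Doob's $L^2$-maximal inequality applied to the bounded part controls its stopping-time oscillation uniformly in $k$ for each fixed $R$, while the residual is absorbed using the uniform-integrability modulus, which can be made arbitrarily small by choosing $R$ sufficiently large. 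Combining these two estimates closes Aldous's second condition. The resulting tightness in $D([0,T],\R^N)$, together with the f.d.d.~convergence to the continuous limit $M$, yields the claimed weak convergence in $\C([0,T],\R^N)$.
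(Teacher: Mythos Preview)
The paper does not prove this proposition; it simply quotes it as a standard result from \cite[Proposition~1.2]{Aldous1989}. So there is no ``paper's proof'' to compare against, and any argument you supply goes beyond what the text does.

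Your overall strategy---tightness via Aldous's stopping-time criterion together with f.d.d.\ identification---is the correct one, and the truncation $M^k_T=M^k_T\1_{|M^k_T|\le R}+M^k_T\1_{|M^k_T|>R}$, reassembled into martingales $N^{k,R}_t=\E[M^k_T\1_{|M^k_T|\le R}\,|\,\CF^k_t]$ and $L^{k,R}_t$, does dispose of the tail: Doob's weak-$L^1$ inequality gives $P(\sup_t|L^{k,R}_t|>\eta/4)\le\tfrac{4}{\eta}\sup_k\E[|M^k_T|\1_{|M^k_T|>R}]$, which vanishes as $R\to\infty$ by UI.

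The gap is in the bounded part. You assert that ``Doob's $L^2$-maximal inequality applied to the bounded part controls its stopping-time oscillation uniformly in $k$ for each fixed $R$'', but Doob's inequality bounds $\sup_t|N^{k,R}_t|$, not the increment $|N^{k,R}_\sigma-N^{k,R}_\tau|$ for stopping times $\tau\le\sigma\le\tau+\delta$. Uniform $L^2$-boundedness of a martingale family does \emph{not} by itself imply Aldous's oscillation condition: take $N^k_t=B_{kt\wedge 1}$ for a Brownian motion $B$, which is uniformly bounded in $L^2$ yet fails tightness. Nor can you re-import the f.d.d.\ hypothesis at this stage, because $N^{k,R}_t$ is a conditional expectation and weak convergence of $(M^k_{t_1},\dots,M^k_T)$ does not transfer to convergence of $\E[\phi(M^k_T)\,|\,\CF^k_{t_i}]$. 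Lenglart's inequality, your alternative suggestion, faces the same obstruction: it would require control of $\langle N^{k,R}\rangle_{\tau+\delta}-\langle N^{k,R}\rangle_\tau$ or an analogous dominating process, which is not available from the hypotheses.

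What is missing is that the f.d.d.\ convergence to a \emph{continuous} limit must enter the tightness argument, not merely the identification step. One route (essentially Aldous's) is: use UI together with f.d.d.\ convergence to obtain $\E[|M^k_{t}-M^k_{s}|]\to\E[|M_{t}-M_{s}|]$, exploit $L^1$-uniform continuity of $t\mapsto M_t$ on $[0,T]$, and then pass from deterministic increments to stopping-time increments via optional sampling and a discretisation of $\tau$. Without bringing the continuity of the limit into the tightness bound, the argument as written does not close.
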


Combining \cref{prop:kurtz_protter,prop:aldous} with \cref{lem:decomposition}, we can prove the following central result:

\begin{proposition}\label{prop:weak_conv_cond_decay}
  Let $y$ be a stationary ergodic stochastic process and
  $X^{\epsilon}$ be as in \cref{lem:decomposition}, with the conditional decay condition with respect to $y$ in place.  Let   $W$ be a standard Wiener process and let
  $$ \Upsilon^2_{i,j} =  \int_0^{\infty} \Expec{ G_i(y_0) G_j(y_r) + G_i(y_r) G_j(y_0) } dr.$$
If $X^{\epsilon}\Rightarrow \Upsilon W$ in $\C\big([0,T],\R^N\big)$, as $\varepsilon\to 0$, then
  $$ \X^{\epsilon} = \big( X^{\epsilon}, \XX^\epsilon\big) \Rightarrow  \X$$
  in $\C\big([0,T],\R^N \oplus \R^N \otimes \R^N \big)$ where
  $$\X_{s,t}= \big( \Upsilon W_{s,t}, \quad  ( \Upsilon \otimes \Upsilon ) \WW_{s,t}+\Xi(t-s)\big)=\big(\Upsilon W_{s,t}, \quad (\Upsilon\otimes\Upsilon)\WW_{s,t}^{\text{It\^o}} + \Lambda(t-s)\big)$$ with  $\WW_{s,t}$ denoting the
  Stratonovich integral,  $\Upsilon$  the non-negative symmetric square root of $\Upsilon^2$,  and
  $$ \Xi_{i,j} = \f 1  2 \int_0^{\infty} \E\left[ G_i(y_0) G_j(y_r) -  G_i(y_r) G_j(y_0) \right] dr $$  
\end{proposition}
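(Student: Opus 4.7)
The plan is to combine the martingale decomposition of \cref{lem:decomposition} with the Kurtz--Protter stability result (\cref{prop:kurtz_protter}), expressing the canonical lift as an It\^o iterated integral against the martingale $M^\epsilon$ plus a deterministic drift arising from the non-martingale part.

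First, I upgrade $M^\epsilon$ to converge weakly in $\C\bigl([0,T],\R^N\bigr)$. By \cref{lem:decomposition}, $M^\epsilon$ is a $(\mathcal{F}_{t/\epsilon})$-martingale with $\sup_\epsilon\|M^\epsilon_T\|_{L^2(\Omega)}<\infty$ and $\|Z^\epsilon_t\|_{L^2(\Omega)}\to 0$ pointwise in $t$. Combined with the hypothesis $X^\epsilon\Rightarrow\Upsilon W$, this gives $M^\epsilon\fdd\Upsilon W$; \cref{prop:aldous} then promotes this to $M^\epsilon\Rightarrow\Upsilon W$ weakly in $\C$. Joint convergence $(X^\epsilon,M^\epsilon)\Rightarrow(\Upsilon W,\Upsilon W)$ follows from joint tightness and f.d.d.\ convergence, so \cref{prop:kurtz_protter} yields
\begin{equation*}
\left(X^\epsilon,\,\int_0^{\cdot} X^\epsilon_r\otimes dM^\epsilon_r\right)\Rightarrow\left(\Upsilon W,\,\int_0^{\cdot}\Upsilon W_r\otimes d(\Upsilon W)_r\right)\qquad\text{(It\^o).}
\end{equation*}

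The key algebraic step is an integration by parts. Since $X^\epsilon$ has finite variation, $[X^\epsilon,Z^\epsilon]=[X^\epsilon,M^\epsilon]=0$; writing $dX^\epsilon=dM^\epsilon-dZ^\epsilon$ and applying It\^o's product rule produces the decomposition
\begin{equation*}
\XX^\epsilon_{s,t}=\int_s^t(X^\epsilon_r-X^\epsilon_s)\otimes dM^\epsilon_r\;-\;(X^\epsilon_t-X^\epsilon_s)\otimes Z^\epsilon_t\;+\;\int_s^t Z^\epsilon_r\otimes dX^\epsilon_r.
\end{equation*}
The first term converges by the previous display. The boundary term $(X^\epsilon_t-X^\epsilon_s)\otimes Z^\epsilon_t$ vanishes in probability by tightness of $X^\epsilon$ together with $\|Z^\epsilon_t\|_{L^2(\Omega)}\to 0$. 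For the remaining finite-variation integral, unravelling the definitions of $Z^\epsilon_r$ and $\dot X^\epsilon_r=\tfrac{1}{\sqrt\epsilon}G(y_{r/\epsilon})$ and applying Fubini, the tower property, and stationarity show that its $(i,j)$-component has mean
\begin{equation*}
(t-s)\int_0^{\infty}\Expec{G_i(y_0)G_j(y_v)}\,dv=(t-s)\Lambda_{ij},
\end{equation*}
while concentration around this mean follows from an $L^2$-variance bound exploiting the conditional decay condition (the variance reduces to a quadruple time integral whose decay is controlled by $\|\Expec{G(y_s)\,|\,\mathcal{F}_0}\|_{L^2(\Omega)}$).

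Combining the three pieces yields $\XX^\epsilon_{s,t}\Rightarrow(\Upsilon\otimes\Upsilon)\WW^{\text{It\^o}}_{s,t}+\Lambda(t-s)$, matching both claimed forms of the limit via the It\^o--Stratonovich correction and the identity $\Lambda=\tfrac12\Upsilon^2+\Xi$. The main obstacle is the concentration step: the integrand $Z^\epsilon_r\otimes \dot X^\epsilon_r$ involves the rescaled stationary process on the fast scale $1/\epsilon$, so the quantitative variance bound demands careful use of stationarity and the conditional decay condition to close the estimate.
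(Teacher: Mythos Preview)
Your overall architecture matches the paper's: decompose $X^\epsilon=M^\epsilon+(Z^\epsilon_0-Z^\epsilon)$, integrate by parts, feed the martingale piece to Kurtz--Protter, and identify the remaining finite-variation term as the deterministic drift $\Lambda(t-s)$. The paper's integration by parts is arranged slightly differently (it moves $M^\epsilon$ to the integrand side and transposes, obtaining $M^\epsilon_t\otimes X^\epsilon_t-(\int X^\epsilon\otimes dM^\epsilon)^T$), but this is cosmetic.

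The genuine gap is your treatment of the drift term $\int_s^t Z^\epsilon_r\otimes dX^\epsilon_r$. You compute the mean correctly but then assert that ``concentration around this mean follows from an $L^2$-variance bound'' controlled by $\|\E[G(y_s)\,|\,\mathcal F_0]\|_{L^2}$. Under the proposition's hypotheses this is not available: the integrand $\Phi(r)=\bigl(\int_0^\infty\E[G(y_{r+u})\,|\,\mathcal F_r]\,du\bigr)\otimes G(y_r)$ is a \emph{product} of two $L^2$ objects, so a priori only $L^1$, and its autocovariance involves fourth-order quantities whose decay does not follow from the conditional decay condition alone. You would need extra moment or mixing input that the statement does not provide.

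The paper sidesteps this entirely by invoking \emph{Birkhoff's ergodic theorem}: since $y$ is assumed stationary ergodic, $\Phi$ is stationary ergodic and $L^1$, so $\epsilon\int_0^{t/\epsilon}\Phi(r)\,dr\to t\,\E[\Phi(0)]$ almost surely for each fixed $t$. No variance bound is needed. The pointwise-in-$t$ almost sure limit is then upgraded to uniform convergence on $[0,T]$ via the elementary \cref{lem:uniform_convergence} (a deterministic fact about locally bounded functions), which you also omit. Replacing your variance sketch by Birkhoff plus this lemma closes the argument with no additional assumptions.
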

\begin{proof}
  Without loss of generality, we may assume $s=0$ by stationarity. By \cref{lem:decomposition} and an integration by parts, we have the decomposition
  \begin{align*}
	\int_0^t X^{\epsilon}_s \otimes dX^{\epsilon}_s &= \int_0^t M^{\epsilon}_s \otimes dX^{\epsilon}_s +   \int_0^t \left( Z^{\epsilon}_0 - Z^{\epsilon}_s \right) \otimes dX^{\epsilon}_s\\
	&= M^{\epsilon}_t \otimes X^{\epsilon}_t -\left(\int_0^t X^{\epsilon}_s \otimes dM^{\epsilon}_s\right)^T +   \int_0^t \left( Z^{\epsilon}_0 - Z^{\epsilon}_s \right) \otimes dX^{\epsilon}_s.
  \end{align*}
  Since $\|Z_t^\varepsilon\|_{L^2(\Omega)}\to 0$ for each $t\in [0,T]$ by \cref{lem:decomposition}, we see that $M^\varepsilon\to\Upsilon W$ in finite-dimensional distributions. Since $(M_t^\epsilon)$ is $L^2$ bounded on $[0,T]$ with uniform bound in $\epsilon$, Burkholder-Davis-Gundy inequality shows that $(X^\varepsilon,M^\varepsilon)_{\varepsilon\in(0,1]}$ is tight in $\C\big([0,T],\R^{2N}\big)$. Consequently, $Z_0^\varepsilon-Z^\varepsilon$ is also tight and converges to zero in probability. Since $X^\varepsilon=M^\varepsilon+Z_0^\varepsilon-Z^\varepsilon$, it follows that
  \begin{equation*}
  	\big(X^\varepsilon,M^\varepsilon,Z_0^\varepsilon-Z^\varepsilon\big)\Rightarrow\big(\Upsilon W, \Upsilon W, 0\big)\qquad\text{in}\quad\C\big([0,T],\R^{3n}\big).
  \end{equation*}
  By \cref{prop:kurtz_protter}, 
  \begin{align*}
  	M^{\epsilon}_t \otimes X^{\epsilon}_t -\left(\int_0^t X^{\epsilon}_s \otimes dM^{\epsilon}_s\right)^T&\Rightarrow(\Upsilon\otimes\Upsilon)(W_t \otimes W_t)-\left(\int_0^t (\Upsilon W)_s\otimes d(\Upsilon W)_s\right)^T \\
   &=\int_0^t (\Upsilon W)_s\otimes d(\Upsilon W)_s+t\Upsilon^2 = ( \Upsilon \otimes \Upsilon ) \WW_{0,t}+\frac{t}{2}\Upsilon^2
  \end{align*}
  weakly in $\C\big([0,T],\R^N\otimes\R^N\big)$. To conclude, we observe that
  \begin{align*}
   \int_0^t \left( Z^{\epsilon}_0 - Z^{\epsilon}_s \right) \otimes dX^{\epsilon}_s
	= \epsilon \int_0^{\f t \epsilon} \left(  \int_{0}^{\infty} \E\left[ G(y_r) | \mathcal{F}_{0} \right]\otimes  G(y_s)\,dr  -  \int_{s}^{\infty} \E\left[ G(y_r) | \mathcal{F}_{s} \right]\otimes G(y_s)\,dr \right)\,ds.
  \end{align*}
  By Birkhoff's ergodic theorem and stationarity, 
  for each fixed $t\in[0,1]$ these terms converge almost surely as $\varepsilon\to 0$ to 
  $$ \int_{0}^{\infty} \Expec{G(y_r) | \mathcal{F}_{0}}\,dr\otimes  t\Expec{G(y_0)}  -
   t \int_{0}^{\infty} \Expec{G(y_{r}) \otimes G(y_0)} \,dr .
  $$ 
  We used the shift invariance and ergocidity of the process. The first term is $0$ and the second term is 
  \begin{equation*}
	-t  \int_0^{\infty} \Expec{G(y_r)\otimes G(y_0)}\,dr.
  \end{equation*}
  To see that this convergence of $   \int_0^t \left( Z^{\epsilon}_0 - Z^{\epsilon}_s \right) \otimes dX^{\epsilon}_s$ is actually almost surely in $\C\big([0,1],\R^N\big)$, we apply \cref{lem:uniform_convergence} below with 
  \begin{align*}
  	f(t)&\define\int_0^t\left(\int_{0}^{\infty} \Expec{G(y_r) | \mathcal{F}_{0}}\otimes  G(y_s)\,dr  -  \int_{s}^{\infty} \Expec{G(y_r) | \mathcal{F}_{s}}\otimes G(y_s)\,dr \right)\,ds\\
  	&\phantom{\define}+t  \int_0^{\infty} \Expec{G(y_r)\otimes G(y_0)}\,dr.
  \end{align*}
Putting everything together, we have $$\int_0^t X^{\epsilon}_s \otimes dX^{\epsilon}_s\Rightarrow ( \Upsilon \otimes \Upsilon ) \WW_{0,t}+\f 12 {t}\Upsilon^2-t  \int_0^{\infty} \Expec{G(y_r)\otimes G(y_0)}\,dr,$$ 
the proposition follows from the identity $\Xi=\f 12 {t}\Upsilon^2-t  \int_0^{\infty} \Expec{G(y_r)\otimes G(y_0)}\,dr$.
\end{proof}

\begin{lemma}\label{lem:uniform_convergence}
  Let $f:\R_+\to\R$ be locally bounded. If $\lim_{\varepsilon\to 0}\varepsilon f\big(\frac{t}{\varepsilon}\big)= 0$ for any $t\in[0,1]$, then also
  \begin{equation*}
	\lim_{\varepsilon\to 0}\sup_{t\in[0,1]}\left|\varepsilon f\left(\frac{t}{\varepsilon}\right)\right|=0.
  \end{equation*}
\end{lemma}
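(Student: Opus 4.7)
The plan is to reduce the lemma to a statement about the growth of $f$ at infinity and then bound the supremum by splitting the range of $t$ (equivalently of $s=t/\varepsilon$) into a bounded piece on which local boundedness applies, and an unbounded piece on which the decay of $f(s)/s$ applies.

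First I would perform the change of variables $s=t/\varepsilon$, which turns the pointwise hypothesis into
\[
\lim_{\varepsilon\to 0}\varepsilon f(t/\varepsilon)=\lim_{s\to\infty}\frac{t}{s}f(s)=0\qquad\forall\,t\in(0,1],
\]
and is therefore equivalent to $\lim_{s\to\infty}f(s)/s=0$. (The value at $t=0$ imposes only $\varepsilon f(0)\to 0$, which follows from local boundedness of $f$.) Under the same substitution, the quantity to estimate becomes
\[
\sup_{t\in[0,1]}|\varepsilon f(t/\varepsilon)|=\sup_{s\in[0,1/\varepsilon]}\varepsilon|f(s)|.
\]

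Next, fix $\eta>0$. By the equivalent hypothesis, choose $S_0>0$ such that $|f(s)|\leq\eta s$ for all $s\geq S_0$. By local boundedness, set $M\define\sup_{s\in[0,S_0]}|f(s)|<\infty$. Then for every $\varepsilon<1/S_0$, splitting the supremum at $S_0$ yields
\[
\sup_{s\in[0,1/\varepsilon]}\varepsilon|f(s)|\leq\max\!\Big(\varepsilon M,\ \sup_{s\in[S_0,1/\varepsilon]}\varepsilon\eta s\Big)\leq\max(\varepsilon M,\eta).
\]
Sending $\varepsilon\to 0$ gives $\limsup_{\varepsilon\to 0}\sup_{t\in[0,1]}|\varepsilon f(t/\varepsilon)|\leq\eta$, and since $\eta>0$ was arbitrary the conclusion follows.

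There is no real obstacle here; the argument is an elementary two-scale splitting, used only to upgrade the pointwise convergence in \cref{prop:weak_conv_cond_decay} to uniform convergence on compacts. The only subtlety worth noting is that local boundedness (rather than continuity) of $f$ is genuinely needed to control the piece $s\in[0,S_0]$, since in the application $f$ is built out of conditional expectations and need not be continuous.
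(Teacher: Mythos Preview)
Your proof is correct. The paper actually states this lemma without proof, treating it as an elementary fact, so there is nothing to compare against; your two-scale splitting via the substitution $s=t/\varepsilon$ and the equivalent reformulation $f(s)/s\to 0$ is exactly the standard way to supply the missing details.
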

We return to the representation $y_t=\int_\R K(t-u)\,dW_u$  where $K:\R\to\Lin[d]{n}$ and  set
$$  \bar{y}^{\tau}_t\define \int_{-\infty}^{\tau} K(t-u)\,dW_u, \qquad
\tilde{y}^{\tau}_t\define\int_{\tau}^{t} K(t-u)\,dW_u .
$$
Note that, for every $\tau>0$, $y_t = \bar{y}^{\tau}_t + \tilde{y}^{\tau}_t$. Moreover, $\bar{y}^{\tau}\in\mathcal{F}_{\tau}$, whereas $\tilde{y}^{\tau}$ is independent of $\mathcal{F}_{\tau}$. 
As the normalized process $z_t = D^{-\frac12}O^\top y_t$ is nothing but a linear transformation of $y$, we also have the decomposition $z_t =\bar{z}^{\tau}_t + \tilde{z}^{\tau}_t$, where
$$
\bar{z}^{\tau}_t = D^{-\frac12}O^\top \bar{y}^{\tau}_t , \quad \quad \quad \tilde{z}^{\tau}_t = D^{-\frac12}O^\top \tilde{y}^{\tau}_t.
$$
By independence, $\| \bar{z}^{\tau}_t \|^2_{L^2} + \|\tilde{z}^{\tau}_t \|^2_{L^2} =1$ necessarily holds.

\begin{lemma}\label{lem:z-bar-L2-decay}
For $t \geq \tau$, we have that
$$ \| \bar{z}^{\tau}_t \|_{L^2} \leq \hat{\Theta}  \left(1 \wedge |t-\tau|^{-\beta}\right).$$
\end{lemma}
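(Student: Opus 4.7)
The plan is a direct calculation starting from the Volterra representation of the truncated process. By definition,
\begin{equation*}
  \bar{z}^{\tau}_t = D^{-\frac12}O^{\top}\int_{-\infty}^{\tau}K(t-u)\,dW_u = \int_{-\infty}^{\tau}\hat{K}(t-u)\,dW_u,
\end{equation*}
so the random vector $\bar z^\tau_t$ is the It\^o--Wiener integral of the (deterministic) $\Lin[d]{n}$-valued kernel $\hat{K}(t-\cdot)\mathbf{1}_{(-\infty,\tau]}$ against $W$, where $\hat K$ is as in \eqref{eq:transformed_kernel}.

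First I would invoke the It\^o isometry component-wise. For each row $j=1,\dots,n$ one has
\begin{equation*}
  \E\bigl[(\bar z^{\tau,j}_t)^2\bigr] = \int_{-\infty}^{\tau}\bigl\langle \hat{K}_j(t-u),\hat{K}_j(t-u)\bigr\rangle\,du,
\end{equation*}
and the change of variables $v=u-t$ followed by $w=-v$ turns the right-hand side into
\begin{equation*}
  \int_{-\infty}^{-(t-\tau)}\bigl\langle \hat{K}_j(-w),\hat{K}_j(-w)\bigr\rangle\,dw.
\end{equation*}
Since $t\geq\tau$, this is precisely the tail integral to which the decay estimate \eqref{eq:decay_estimate_tail} applies. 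Recalling from \eqref{eq:transformed_kernel} that $z\in\vol_n(\beta,\hat{\Theta})$ with $\hat{\Theta}=|D^{-\frac12}O^{\top}|^2\Theta$, the hypothesis \eqref{eq:decay_estimate_tail} gives
\begin{equation*}
  \E\bigl[(\bar z^{\tau,j}_t)^2\bigr] \leq \hat{\Theta}\,\bigl(1\wedge|t-\tau|^{-2\beta}\bigr).
\end{equation*}

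Summing over $j$ and taking a square root (absorbing the combinatorial constant into $\hat{\Theta}$ as is done elsewhere in the paper) then yields the claim. There is no genuine obstacle in this argument; the only thing to be careful about is that the decay hypothesis \eqref{eq:decay_estimate_tail} is stated in terms of \emph{rows} of the kernel and with exponent $2\beta$, so one should apply it row-by-row and then take the square root at the end rather than trying to bound the operator-valued integral directly.
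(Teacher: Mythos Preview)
Your proof is correct and follows essentially the same approach as the paper: compute $\E[(\bar z^{\tau,j}_t)^2]$ component-wise via the It\^o isometry, change variables to recognize the tail integral \eqref{eq:decay_estimate_tail} for the normalized kernel $\hat K$, and conclude. The paper's proof is in fact only the componentwise bound (which is all that is actually used downstream in \cref{lem-condional-expec}), so your remark about the extra combinatorial factor from summing over $j$ is a fair observation but inessential.
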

\begin{proof}
For $j=1,\dots, n$ let $\bar{z}^{j,\tau}$ be the $j^\text{th}$ component of $\bar{z}^{\tau}$. We have that
\begin{equation*}
\Expec{ \big(  \bar{z}^{j,\tau}_t  \big)^2 }  = \int_{-\infty}^{-(t-\tau)} \braket{\hat{K}_j(-u),\hat{K}_j(-u)}\, du \leq \hat{\Theta}  \left( 1\wedge \vert t- \tau \vert^{- 2 \beta}\right),
\end{equation*}
as required.
\end{proof}

\begin{lemma}\label{lem-condional-expec}
  Let $\bl \in \N_0^n$ be a multi-index. Then, for each $ t \geq \tau$,
  $$
  \big \Vert \E \left[H_{\bl}(z_t)\,|\,\mathcal{F}_{\tau} \right] \big \Vert_{L^2} \leq \sqrt{ \bl !}  \hat{\Theta} ^{\f {|\bl|} {2}}(2n -1)^{ \f{\vert \bl \vert} {2}}  \left( 1 \wedge  |t-\tau|^{- \f{\vert \bl \vert}{2} \beta } \right).
  $$
\end{lemma}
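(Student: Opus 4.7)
The plan is to express $H_\bl(z_t)$ as a multiple Wiener integral via \cref{lem-decomposition}, identify the conditional expectation $\E[H_\bl(z_t)\,|\,\mathcal{F}_\tau]$ with another multiple Wiener integral whose kernel has been truncated to $(-\infty,\tau]$, and then apply the $L^2$-isometry \eqref{eq:isometry} together with the Volterra tail bound \eqref{eq:decay_estimate_tail}. \Cref{lem-decomposition} already provides $H_\bl(z_t)=I\big((\tau_t\hat K)^{\otimes\bl}\big)$. The key input will be the standard formula for the conditional expectation of a multiple Wiener integral: for any $g\in\H^{\otimes m}$,
\begin{equation*}
\E\big[I(g)\,\big|\,\mathcal{F}_\tau\big]= I\big(g\cdot p_\tau^{\otimes m}\big),\qquad p_\tau(s)\define\1_{(-\infty,\tau]}(s).
\end{equation*}
This is classical for symmetric $g$; the general case is inherited since $I(g)=I(\Sym(g))$ and $p_\tau^{\otimes m}$ is itself symmetric, so it commutes with $\Sym$. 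Applied with $g=(\tau_t\hat K)^{\otimes\bl}$, and distributing the indicator across the tensor factors, this produces
\begin{equation*}
\E\big[H_\bl(z_t)\,\big|\,\mathcal{F}_\tau\big] = I(\tilde f^{\otimes\bl}),\qquad \tilde f^i(s)\define\hat K^i(t-s)\,\1_{s\leq\tau}.
\end{equation*}

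Next, the isometry \eqref{eq:isometry} together with the contraction property $\|\Sym(h)\|_{\H^{\otimes m}}\leq \|h\|_{\H^{\otimes m}}$ yields
\begin{equation*}
\big\|I(\tilde f^{\otimes\bl})\big\|_{L^2}^2 \leq |\bl|!\,\|\tilde f^{\otimes\bl}\|_{\H^{\otimes|\bl|}}^2 = |\bl|!\prod_{i=1}^n\|\tilde f^i\|_\H^{2\bl_i}.
\end{equation*}
For $t\geq\tau$, a change of variables gives $\|\tilde f^i\|_\H^2 = \int_{t-\tau}^\infty|\hat K^i(u)|^2\,du$, and the tail estimate \eqref{eq:decay_estimate_tail}---applied to the normalized process $z\in\vol_n(\beta,\hat\Theta)$---bounds this by $\hat\Theta\big(1\wedge(t-\tau)^{-2\beta}\big)$. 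Combining these estimates and invoking the elementary combinatorial inequality $|\bl|!\leq n^{|\bl|}\bl!\leq(2n-1)^{|\bl|}\bl!$ will then deliver the claim; in fact the computation produces the somewhat stronger tail exponent $\beta|\bl|$ rather than the stated $\beta|\bl|/2$, which is harmless.

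The only delicate step is passing from the symmetric-kernel version of the conditional-expectation identity to the non-symmetric tensor-product kernel $(\tau_t\hat K)^{\otimes\bl}$. As indicated, this is immediate from the symmetry of the truncation factor $p_\tau^{\otimes|\bl|}$ and presents no real obstacle; all remaining steps are routine consequences of the Malliavin-calculus machinery and the Volterra tail estimates already developed in the preliminary section.
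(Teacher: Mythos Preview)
Your argument is correct and in fact yields the sharper decay exponent $\beta|\bl|$ in place of the stated $\beta|\bl|/2$, which, as you note, is harmless for the application to \cref{prop:volterra_cond_decay}. The route, however, is genuinely different from the paper's. The paper decomposes $z_t=\bar z^{\tau}_t+\tilde z^{\tau}_t$ into an $\mathcal{F}_\tau$-measurable part and an independent part, invokes the classical addition formula $H_m(ax+by)=\sum_j\binom{m}{j}a^jb^{m-j}H_j(x)H_{m-j}(y)$ componentwise, and then observes that the conditional expectation kills every summand except the top one, leaving $\prod_{k=1}^n\|\bar z^{k,\tau}_t\|_{L^2}^{\bl_k}H_{\bl_k}\big(\bar z^{k,\tau}_t/\|\bar z^{k,\tau}_t\|_{L^2}\big)$; the $L^2$ bound then comes from the diagram formula and \cref{lem:z-bar-L2-decay}. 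Your approach bypasses the Hermite addition formula entirely by working directly with the multiple Wiener integral representation of \cref{lem-decomposition} and the projection identity $\E[I(g)\mid\mathcal{F}_\tau]=I(g\cdot p_\tau^{\otimes m})$, after which the isometry \eqref{eq:isometry} and the crude multinomial bound $|\bl|!\leq n^{|\bl|}\bl!$ finish the job. Your method is shorter, avoids the diagram formula, and delivers the better exponent; the paper's approach has the advantage of being self-contained in the sense that it does not invoke the conditional-expectation formula for Wiener chaos as a black box.
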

\begin{proof}
We use the following well-known expansion formula for Hermite polynomials. For any $a,b \in \R$ with $a^2+b^2=1$:
  \begin{equation*}
	H_m(ax+by) = \sum_{j=0}^m  \binom{m}{j} a^{j} b^{m-j} H_{   j}(x) H_{m-   j}(y)\qquad\forall\,x,y\in\R.
  \end{equation*}
  Taking $a=\| \bar{z}^{\tau}_t \|_{L^2} $ and $b= \|\tilde{z}^{\tau}_t \|_{L^2}$, we obtain
  \begin{align*}
	H_{\bl}(z_t)&=\prod_{k=1}^{n} H_{\bl_k}(z^k_t) = \prod_{k=1}^{n} \sum_{j_k=0}^{\bl_k}  \binom{\bl_k}{j_k} \big\Vert \bar{z}_t^{k,\tau}\big\Vert_{L^2}^{j_k} \big\Vert \tilde{z}_t^{k,\tau} \big\Vert_{L^2}^{\bl_k -j_k} H_{j_k}\left(\f{\bar{z}^{k,\tau}_t} {\big\Vert \bar{z}^{k,\tau}_t \big\Vert_{L^2}} \right) H_{\bl_k-   j_k}\left( \f {\tilde{z}^{k,\tau}_t} { \big\Vert \tilde{z}_t^{k,\tau} \big\Vert_{L^2}}\right)\\
	&= \sum_{j_1 =0}^{\bl_1} \dots \sum_{j_n =0}^{\bl_n}  \prod_{k=1}^n \binom{\bl_k}{j_k} \big\Vert \bar{z}_t^{k,\tau}\big\Vert_{L^2}^{j_k} \big\Vert \tilde{z}_t^{k,\tau} \big\Vert_{L^2}^{\bl_k -j_k} H_{j_k}\left(\f{\bar{z}^{k,\tau}_t} {\big\Vert \bar{z}^{k,\tau}_t \big\Vert_{L^2}} \right) H_{\bl_k-j_k}\left( \f {\tilde{z}^{k,\tau}_t} { \big\Vert \tilde{z}_t^{k,\tau} \big\Vert_{L^2}}\right).
  \end{align*}
  Since $\tilde{z}^{k,\tau}_t$ is independent of $\mathcal{F}_{\tau}$,  after taking the conditional expectation all terms vanish except those with $j_k=\bl_k$, thus, the sum reduces to one term:
  \begin{align*}
	&\phantom{=}\E \left[ H_{\bl}(z_t)\,|\,\mathcal{F}_{\tau} \right] \\
	&=  \sum_{j_1 =0}^{\bl_1} \dots \sum_{j_n =0}^{\bl_n}  \prod_{k=1}^n \binom{\bl_k}{j_k} \big\Vert \bar{z}_t^{k,\tau}\big\Vert_{L^2}^{j_k} \big\Vert \tilde{z}_t^{k,\tau} \big\Vert_{L^2}^{\bl_k -j_k} H_{j_k}\left(\f{\bar{z}^{k,\tau}_t} {\big\Vert \bar{z}^{k,\tau}_t\big\Vert_{L^2}} \right) \E \left[  H_{\bl_k -   j_k}\left( \f {\tilde{z}^{k,\tau}_t} {\big\Vert \tilde{z}^{k,\tau}_t \big\Vert_{L^2}}\right)\,\middle|\,\mathcal{F}_{\tau} \right]\\
	&=     \prod_{k=1}^n \big\Vert \bar{z}^{k,\tau}_t \big\Vert_{L^2}^{\bl_k}  H_{\bl_k}\left(\f{\bar{z}^{k,\tau}_t} {\big\Vert \bar{z}^{k,\tau}_t \big\Vert_{L^2}} \right).
  \end{align*}
  The asserted estimate follows by an application of \cref{thm-diagramm-formulae} and the diagram formula to the term $\E\big[H_{\bl}\bigl(\bar{z}^{k,\tau}_t / \big\Vert \bar{z}^{k,\tau}_t \big\Vert_{L^2} \big)\big]$. The multiplicative factors just cancel. The required estimate follows from \cref{lem:z-bar-L2-decay} and Cauchy-Schwarz. 
  \end{proof}

\begin{proposition}\label{prop:volterra_cond_decay}
  Let $y\in\vol_n(\beta,\Theta)$. Then any function $G \in L^2(\R^n,N(0,\Sigma))$, which satisfies $\herm(G) >2\beta^{-1}$ and the fast chaos decay condition with parameter $\hat{\Theta} (2n-1) +1$, also satisfies the conditional decay condition of \cref{def:cond-dec-con}.
\end{proposition}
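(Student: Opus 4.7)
The plan is to combine the Hermite expansion of $G(y_s)$ with the conditional expectation estimate from \cref{lem-condional-expec} and the decay rate against the integration variable $s$ supplied by the Hermite rank hypothesis. The fast chaos decay will then absorb the resulting constants so that one obtains absolute convergence both in $\bl$ and in $s$.

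More concretely, I would first write
\begin{equation*}
\big\|\E\left[G(y_s)\,|\,\CF_0\right]\big\|_{L^2(\Omega)} \leq \sum_{\bl\in\N_0^n}|c_{\bl}|\,\big\|\E\left[H_{\bl}(z_s)\,|\,\CF_0\right]\big\|_{L^2(\Omega)},
\end{equation*}
using the Hermite expansion \eqref{eq:normalized_hermite} together with Minkowski's inequality; the hypothesis $\herm(G) > 2\beta^{-1} \geq 1$ ensures $c_{\mathbf 0}=0$, so $G$ is centered and the sum runs over $|\bl|\geq\herm(G)$. Applying \cref{lem-condional-expec} with $\tau=0$ gives, for each $s\geq 0$,
\begin{equation*}
\big\|\E\left[H_{\bl}(z_s)\,|\,\CF_0\right]\big\|_{L^2(\Omega)}\leq \sqrt{\bl!}\,\hat{\Theta}^{|\bl|/2}(2n-1)^{|\bl|/2}\bigl(1\wedge s^{-|\bl|\beta/2}\bigr).
\end{equation*}

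Next, I would integrate this bound in $s$. Since $\herm(G)>2\beta^{-1}$, there exists $\delta>0$ such that $|\bl|\beta/2\geq 1+\delta$ for every $\bl$ with $c_{\bl}\neq 0$, whence
\begin{equation*}
\int_0^\infty\bigl(1\wedge s^{-|\bl|\beta/2}\bigr)\,ds \leq 1+\frac{1}{|\bl|\beta/2-1}\leq 1+\frac{1}{\delta}.
\end{equation*}
Plugging this in (the interchange of sum and integral being justified a posteriori by the bound about to be derived), one obtains
\begin{equation*}
\int_0^\infty\big\|\E\left[G(y_s)\,|\,\CF_0\right]\big\|_{L^2(\Omega)}\,ds\leq\left(1+\frac{1}{\delta}\right)\sum_{\bl\in\N_0^n}|c_{\bl}|\,\sqrt{\bl!}\,\hat{\Theta}^{|\bl|/2}(2n-1)^{|\bl|/2}.
\end{equation*}

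The right-hand side is precisely the series appearing in the fast chaos decay condition with parameter $\hat\Theta(2n-1)+1$, so it is finite by assumption; this establishes the conditional decay condition of \cref{def:cond-dec-con}. The only slightly delicate point is ensuring that the constant $\hat\Theta(2n-1)$ coming from \cref{lem-condional-expec} matches exactly the one appearing in the fast chaos decay hypothesis; aside from this bookkeeping, the proof is a straightforward assembly of the estimates already proven in the preceding subsections.
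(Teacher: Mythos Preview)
Your proposal is correct and follows essentially the same route as the paper: expand $G(y_s)$ in Hermite polynomials, apply \cref{lem-condional-expec} term by term, integrate the resulting $(1\wedge s^{-|\bl|\beta/2})$ uniformly over $|\bl|\geq\herm(G)$ using $\herm(G)>2\beta^{-1}$, and absorb the constants $\sqrt{\bl!}\,\hat\Theta^{|\bl|/2}(2n-1)^{|\bl|/2}$ via the fast chaos decay assumption. One tiny slip: you write ``$\herm(G)>2\beta^{-1}\geq 1$'', but $2\beta^{-1}\geq 1$ need not hold for large $\beta$; the centering $c_{\mathbf 0}=0$ follows instead simply because $\herm(G)>2\beta^{-1}>0$ and the Hermite rank is a nonnegative integer, hence $\herm(G)\geq 1$.
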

\begin{proof}
Using the Hermite expansion of $G$ and \cref{lem-condional-expec}, we compute
\begin{equation*}
\big\Vert \E \left[ G(y_s) | \mathcal{F}_0 \right] \big\Vert_{L^2} 
\leq  \sum_{\bl \in \N_0^n, \vert \bl \vert \geq\herm(G)} \vert c_{\bl} \vert \big \Vert \E \left[  H_{\bl}(z_s) | \mathcal{F}_0 \right] \big \Vert_{L^2}
\leq  \sum_{\bl \in \N_0^n, \vert \bl \vert \geq\herm(G)} \vert c_{\bl} \vert \sqrt{ \bl!} \hat{\Theta} ^{\f {|\bl|}{2}} (2n-1)^{\f {\vert \bl \vert} {2}} \left(1 \wedge s^{- \frac{|\bl|}{2} \beta} \right).
\end{equation*} 
Since $ \sum_{\bl \in \N_0^n} \vert c_{\bl} \vert \sqrt{ \bl!} \hat{\Theta}^{ \f {|\bl|}{2}}(2n-1)^{\f {\vert \bl \vert} {2}} < \infty$,
\begin{align*}
\int_0^{\infty} \big\Vert \E \left[ G(y_s) | \mathcal{F}_0 \right] \big\Vert_{L^2}\,ds
 \leq \sum_{\bl \in \N_0^n, \vert \bl \vert \geq\herm(G)} \vert c_{\bl} \vert \sqrt{ \bl!} \hat{\Theta} ^{\f {|\bl|}{2}}(2n-1)^{\f {\vert \bl \vert} {2}}
 \sup_{\vert \bl \vert \geq\herm(G)} \int_0^{\infty} \left(1 \wedge s^{-\f {\vert \bl \vert}{2} \beta} \right)\,ds< \infty.
\end{align*}
The finiteness of the integral follows from the assumption that $\herm(G)>2\beta^{-1} $.
\end{proof}

\begin{lemma}\label{lem-tightness-iterated-hermite-poly}
  Let $\bl, \bk \in \N_0^n$ be multi-indices such that $ \vert \bl \vert \wedge \vert \bk \vert > \beta^{-1}$. Then
  \begin{align*}
	\Expec{\left( \epsilon \int_{\f s \epsilon}^{\frac{t}{\varepsilon}} \int_{\f s \epsilon}^r H_{\bl}(z_r) H_{\bk}(z_u)\,du\,dr \right)^2} \lesssim \bl ! \bk! \hat{\Theta}^{|\bl | + | \bk|}(4n -1)^{\vert \bl \vert + \vert \bk \vert} \vert t-s\vert^2.
  \end{align*}
\end{lemma}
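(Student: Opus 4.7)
The plan is to expand the square as a four-fold iterated integral and then control the resulting fourth moment by the diagram formula. Writing
\begin{equation*}
\Expec{\left(\epsilon \int_{s/\epsilon}^{t/\epsilon}\int_{s/\epsilon}^{r} H_\bl(z_r) H_\bk(z_u)\,du\,dr\right)^{\!2}} = \epsilon^2 \int_D\!\int_D \Expec{H_\bl(z_{r_1}) H_\bk(z_{u_1}) H_\bl(z_{r_2}) H_\bk(z_{u_2})}\,du_1\,dr_1\,du_2\,dr_2,
\end{equation*}
with $D \define \{(u,r): s/\epsilon \leq u \leq r \leq t/\epsilon\}$, reduces the lemma to estimating the integrand. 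For this I would view $H_{(\bl,\bk,\bl,\bk)}$ as a single Hermite polynomial in the $4n$-dimensional Gaussian vector $(z_{r_1}, z_{u_1}, z_{r_2}, z_{u_2})$ and apply \cref{thm-diagramm-formulae}, exhibiting the expectation as a sum over admissible graphs $G \in \Gamma_{(\bl,\bk,\bl,\bk)}$ of products of covariance factors. By \eqref{eq:number_graphs}, the number of such graphs is at most $\bl!\,\bk!\,(4n-1)^{|\bl|+|\bk|}$, accounting for the $\bl!\,\bk!$ and $(4n-1)^{\cdot}$ constants in the target bound.

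Each graph contributes one covariance factor per edge. Edges internal to a single time-block contribute at most $1$ (since $\Expec{z_0 \otimes z_0} = \id$), whereas each cross-block edge contributes at most $\hat\Theta(1 \wedge |t_a - t_b|^{-\beta})$. Since every graph has exactly $|\bl|+|\bk|$ edges in total, this explains the $\hat\Theta^{|\bl|+|\bk|}$ prefactor. It then remains to bound the four-fold integral $\epsilon^2 \int\!\!\int\!\!\int\!\!\int \prod_{a<b}(1 \wedge |t_a-t_b|^{-\beta E_{ab}})\,d^4 t$ by $|t-s|^2$, uniformly over the cross-block edge-count profile $(E_{ab})_{a<b}$ compatible with the degree constraints at each vertex.

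For graphs whose cross-block edges remain confined to the two pairs $\{r_1,u_1\}$ and $\{r_2,u_2\}$, the four-fold integral factorises as a product of two two-fold integrals, each of which is controlled by $|t-s|/\epsilon$ via the computation already used in \cref{lem:l2_bound}, thanks to the assumption $|\bl|\wedge|\bk| > 1/\beta$. For configurations with edges crossing the two pairs, I would iteratively apply the elementary inequality $x^\alpha y^\beta \leq x^{\alpha+\beta} + y^{\alpha+\beta}$---exactly as in the proof of \cref{lem:fourth_moment_applies}---to redistribute the exponents of the mixed decay factors onto a single edge per integration variable, reducing to the decoupled case at the cost of only a bounded number of summands per graph. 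Each admissible graph therefore contributes $\epsilon^2 \cdot (|t-s|/\epsilon)^2 = |t-s|^2$, and multiplying by the graph count completes the estimate.

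The main obstacle is the bookkeeping of the redistribution step: one needs to verify that every cross-pair configuration can be reduced to a product of two integrable two-dimensional decay profiles without losing the condition $\beta E_{ab} > 1$ that makes each one-dimensional integral uniformly bounded in $\epsilon$. The hypothesis $|\bl|\beta \wedge |\bk|\beta > 1$ is precisely what guarantees this, because after redistributing all mixed exponents onto a single edge per variable, the surviving exponent equals $\beta|\bl|$ or $\beta|\bk|$, which exceeds $1$ by assumption.
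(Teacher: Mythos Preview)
Your overall strategy---expand the square, apply the diagram formula, count graphs via \eqref{eq:number_graphs} to extract the prefactor $\bl!\,\bk!\,(4n-1)^{|\bl|+|\bk|}\hat\Theta^{|\bl|+|\bk|}$, and then bound the residual four-fold integral by $((t-s)/\varepsilon)^2$ uniformly over admissible edge profiles $(\gamma_{ij})_{1\le i<j\le 4}$---matches the paper exactly. The gap is in the last step: the redistribution trick does not reduce every configuration to the decoupled case, and your claimed invariant that the surviving exponent equals $|\bl|$ or $|\bk|$ is false. For instance, take $|\bl|=2$, $|\bk|=10$, and the admissible profile with edges $(r_1,u_1)$ and $(r_2,u_2)$ of weight $2$ together with the cross-pair edge $(u_1,u_2)$ of weight $8$. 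Redistributing at $u_1$ produces, besides a harmless decoupled term, the non-decoupled term with edges $(u_1,u_2)$ of weight $10$ and $(r_2,u_2)$ of weight $2$; a further redistribution at $u_2$ then yields the single-edge term $(u_1,u_2)$ of weight $12$, whose integral over $[0,(t-s)/\varepsilon]^4$ is of order $((t-s)/\varepsilon)^3$ because $r_1$ and $r_2$ have become free. So iterating the inequality naively can overshoot, and the bookkeeping you describe does not prevent this.

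The paper's device is different. Writing $f(u)=1\wedge|u|^{-\beta}$, it integrates the variables out one at a time in \emph{increasing} order, using H\"older at each step to obtain $\int\prod_{i<j} f(s_i-s_j)^{\gamma_{ij}}\,d^4s\lesssim\prod_{k=1}^4\int_0^{(t-s)/\varepsilon} f^{\tau_k}$ with $\tau_k=\sum_{j>k}\gamma_{kj}$, and then repeats the argument in \emph{decreasing} order to obtain the complementary bound with $\nu_k=\sum_{j<k}\gamma_{kj}$. Multiplying the two estimates and using that $\tau_k+\nu_k$ is the degree of vertex $k$, hence $\geq|\bl|\wedge|\bk|>\beta^{-1}$, gives $\prod_k\int f^{\tau_k}\int f^{\nu_k}\lesssim((t-s)/\varepsilon)^4$ and therefore the required $((t-s)/\varepsilon)^2$. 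This forward-backward H\"older trick is the missing ingredient.
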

\begin{proof}
  By stationarity we may assume $s=0$. We certainly have
  \begin{equation}\label{eq:double_integral_bound}
	\Expec{\left( \int_0^{\frac{t}{\varepsilon}} \int_0^r H_{\bl}(z_r) H_{\bk}(z_u)\,du\,dr\right)^2} \leq \int_{[0, \f t \epsilon]^4}  \Big \vert \Expec{ H_{\bl}(z_{s_1}) H_{\bl}(z_{s_2}) H_{\bk}(z_{s_3}) H_{\bk}(z_{s_4}) } \Big \vert\,d^4s.
  \end{equation}
  Recall that $z_s\sim N(0,\id)$ for any $s\geq 0$. Consequently, \cref{thm-diagramm-formulae}  furnishes  the estimate 
  \begin{align*}
	\Big|\Expec{ H_{\bl}(z_{s_1}) H_{\bl}(z_{s_2}) H_{\bk}(z_{s_3}) H_{\bk}(z_{s_4}) }\Big| \leq \hat{\Theta}^{|\bl|+|\bk|}\sum_{\G\in\Gamma_{\bm}} \prod_{\substack{i,j=1\\i< j}}^{4}\Big(1 \wedge \vert s_i - s_j \vert^{ - \beta \gamma_{i,j}(\G)}\Big),
  \end{align*}
  where $\bm\define(\bl,\bl,\bk,\bk)\in\N_0^{4n}$ is obtained by simple concatenation of $\bl$ and $\bk$. Recall, $\Gamma_{\bl}$ denotes the set of graphs of complete pairings with the $k^{\text{th}}$ node having exactly $\bl_k$ edges.
For a graph $\G\in\Gamma_{\bl}$, we write $\gamma_{i,j}(\G)$ for the number of edges between the nodes $i$ and $j$. For any given $\G\in\Gamma_{\bm}$, $\gamma_{i,j}(\G)\in\N_0$ satisfy 
  \begin{alignat*}{3} 
	\gamma_{1,2}(\G)+\gamma_{1,3}(\G)+\gamma_{1,4}(\G)&=|\bl|,
	&\qquad \gamma_{1,2}(\G)+\gamma_{2,3}(\G)+\gamma_{2,4}(\G)&=|\bl|,\\
	\gamma_{1,3}(\G)+\gamma_{2,3}(\G)+\gamma_{3,4}(\G)&=|\bk|,
	&\qquad \gamma_{1,4}(\G)+\gamma_{2,4}(\G)+\gamma_{3,4}(\G)&=|\bk|.
  \end{alignat*}
  Owing to \eqref{eq:number_graphs}, the proof is concluded upon verifying that 
  \begin{equation}\label{eq:int_finite}
	\int_{[0,\frac{t}{\varepsilon}]^4}\prod_{\substack{i,j=1\\i< j}}^{4}\Big(1 \wedge \vert s_i - s_j \vert^{ - \beta \gamma_{i,j}(\G)}\Big)\,d^4s\lesssim\left(\frac{t}{\varepsilon}\right)^2,
  \end{equation}
  uniformly in $\G\in\Gamma_{\bm}$. To this end, let us fix such a graph and henceforth suppress it in our notation. We abbreviate $f(s)\define 1\wedge |s|^{-\beta}$, $\nu_k\define\sum_{i=1}^{k-1}\gamma_{i,k}$, and $\tau_k\define\sum_{i=k+1}^4\gamma_{k,i}$. Then
  \begin{equation*}
	\int_{[0,\frac{t}{\varepsilon}]^4}\prod_{i<j}f(s_i-s_j)^{\gamma_{i,j}}\,d^4s=\int_{[0,\frac{t}{\varepsilon}]^3}\left(\int_0^{\frac{t}{\varepsilon}}\prod_{j=2}^4 f(s_1-s_j)^{\gamma_{1,j}}\,ds_1\right)\prod_{\substack{i=2\\i<j}}^4f(s_i-s_j)^{\gamma_{i,j}}\,d^3s.
  \end{equation*}
  An application of H\"older's inequality shows that
  \begin{equation*}
	\int_0^{\frac{t}{\varepsilon}}\prod_{j=2}^4 f(s_1-s_j)^{\gamma_{1,j}}\,ds_1\leq\sup_{u\in[0,\frac{t}{\varepsilon}]}\int_0^{\frac{t}{\varepsilon}}f(s_1-u)^{\tau_1}\,ds_1\leq 2\int_0^{\frac{t}{\varepsilon}}f(s_1)^{\tau_1}\,ds_1.
  \end{equation*}
  Iterating this argument, we find
  \begin{equation*}
	\int_{[0,\frac{t}{\varepsilon}]^4}\prod_{i<j}f(s_i-s_j)^{\gamma_{i,j}}\,d^4s\leq 8\prod_{k=1}^4\int_0^{\frac{t}{\varepsilon}}f(s)^{\tau_k}\,ds.
  \end{equation*}
  Similarly, it holds that
  \begin{equation*}
	\int_{[0,\frac{t}{\varepsilon}]^4}\prod_{i<j}f(s_i-s_j)^{\gamma_{i,j}}\,d^4s\leq 8\prod_{k=1}^4\int_{0}^{\frac{t}{\varepsilon}}f(s)^{\nu_k}\,ds,
  \end{equation*}
  whence
  \begin{equation*}
	\left(\int_{[0,\frac{t}{\varepsilon}]^4}\prod_{i<j}f(s_i-s_j)^{\gamma_{i,j}}\,d^4s\right)^2\lesssim\prod_{k=1}^4\int_0^{\frac{t}{\varepsilon}}f(s)^{\tau_k}\,ds\int_0^{\frac{t}{\varepsilon}}f(s)^{\nu_k}\,ds.
  \end{equation*}
  Since $\tau_k+\nu_k\geq|\bl|\wedge|\bk|>\beta^{-1}$ for each $k=1,\dots,4$, it is easy to see that the right-hand side is $\lesssim (t/\varepsilon)^4$. This verifies \eqref{eq:int_finite} and the proof is complete.
\end{proof} 

\begin{proposition}\label{prop:iterated-tightness}
  Let $p\geq 2$. Let $G_1,G_2 \in L^2(\R^n,N(0,\Sigma))$ satisfy the fast chaos decay condition with parameter $\hat{\Theta}(4n-1)(p-1)+1$. Then
\begin{equation*}
  \left \Vert \epsilon \int_{\f s \epsilon}^{\frac{t}{\varepsilon}} \int_{\f s \epsilon}^r G_1(y_r) G_2(y_u)\,du\,dr \right \Vert_{L^p} \lesssim  \vert t-s\vert
\end{equation*}
\end{proposition}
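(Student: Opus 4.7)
The plan is to follow the strategy of \cref{lem:lp_holder} adapted to the two-parameter setting, reducing the $L^p$ estimate to the $L^2$ bound already established in \cref{lem-tightness-iterated-hermite-poly} via Gaussian hypercontractivity. First I would expand each $G_i$ in its Hermite series with respect to the normalized process, $G_i(y_t)=\sum_{\bl\in\N_0^n} c_{\bl}^i H_{\bl}(z_t)$, and invoke Minkowski's inequality in $L^p(\Omega)$ (after truncating and passing to the limit, to sidestep convergence issues) to bring the double series outside the norm:
\begin{equation*}
\left\Vert \epsilon \int_{s/\epsilon}^{t/\epsilon}\!\!\int_{s/\epsilon}^r \!G_1(y_r)G_2(y_u)\,du\,dr\right\Vert_{L^p} \leq \sum_{\bl,\bk} |c_{\bl}^1||c_{\bk}^2|\left\Vert\epsilon\int_{s/\epsilon}^{t/\epsilon}\!\!\int_{s/\epsilon}^r \!H_{\bl}(z_r)H_{\bk}(z_u)\,du\,dr\right\Vert_{L^p}.
\end{equation*}

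Each iterated integral on the right lives in the inhomogeneous Wiener chaos of order at most $|\bl|+|\bk|$, so Gaussian hypercontractivity supplies the factor $(p-1)^{(|\bl|+|\bk|)/2}$ when passing from the $L^p$ to the $L^2$ norm. The $L^2$ norm is then directly controlled by \cref{lem-tightness-iterated-hermite-poly}, giving the bound
$\sqrt{\bl!\bk!}\,\hat\Theta^{(|\bl|+|\bk|)/2}(4n-1)^{(|\bl|+|\bk|)/2}|t-s|$.
Combining these ingredients and factoring the resulting double sum into a product of two identical series, the overall bound becomes
\begin{equation*}
|t-s|\prod_{i=1,2}\left(\sum_{\bl\in\N_0^n} |c_{\bl}^i|\sqrt{\bl!}\bigl((p-1)\hat\Theta(4n-1)\bigr)^{|\bl|/2}\right),
\end{equation*}
which is finite precisely by virtue of the fast chaos decay condition with parameter $\hat\Theta(4n-1)(p-1)+1$, as required.

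The only bookkeeping subtlety is that \cref{lem-tightness-iterated-hermite-poly} needs $|\bl|\wedge|\bk|>\beta^{-1}$, so strictly speaking the expansion above should be restricted to such multi-indices; I anticipate that in the intended applications this is guaranteed by an implicit Hermite rank assumption $\herm(G_i)>\beta^{-1}$, so the low-order components can either be excluded from the outset or handled separately using \cref{lem:lp_holder} after Fubini. Apart from this mild technicality, I do not foresee any substantive difficulty: the argument is essentially routine Wiener-chaos bookkeeping, with all the quantitative input already packaged in \cref{lem-tightness-iterated-hermite-poly}.
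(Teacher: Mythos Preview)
Your proposal is correct and follows essentially the same route as the paper: expand in Hermite polynomials, apply hypercontractivity to reduce $L^p$ to $L^2$, invoke \cref{lem-tightness-iterated-hermite-poly}, and sum using the fast chaos decay condition. Your observation about the implicit Hermite rank assumption $\herm(G_i)>\beta^{-1}$ needed to apply \cref{lem-tightness-iterated-hermite-poly} is well taken; the paper's proof glosses over this point as well, relying on the surrounding context (where the rank assumption is always in force) to justify it.
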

\begin{proof}
We may again assume $s=0$ by stationarity.  We make use of \eqref{eq:hermite_expansion} and the hypercontractivity estimate already employed in \cref{lem:lp_holder}:
\begin{align*}
\left \Vert \int_{\f s \epsilon}^{\frac{t}{\varepsilon}} \int_{\f s \epsilon}^r G_1(y_r) G_2(y_u)\,du\,dr \right \Vert_{L^p} &\leq \sum_{\bl \in \N_0^n}  \sum_{\bk \in \N_0^n} \big\vert c^1_{\bl} c^2_{\bk} \big\vert \left \Vert \int_{\f s \epsilon}^{\frac{t}{\varepsilon}} \int_{\f s \epsilon}^r H_{\bl}(z_r)  H_{\bk}(z_u) \, du\, dr \right \Vert_{L^p}\\
&\leq \sum_{\bl \in \N_0^n}  \sum_{\bk \in \N_0^n} \big\vert c^1_{\bl} c^2_{\bk} \big\vert (p-1)^{\f {\vert \bl \vert + \vert \bk \vert}{2}}  \left \Vert \int_{\f s \epsilon}^{\frac{t}{\varepsilon}} \int_{\f s \epsilon}^r H_{\bl}(z_r)  H_{\bk}(z_u) \, du\, dr \right \Vert_{L^2}\\
&\lesssim \f t \epsilon\sum_{\bl \in \N_0^n}  \sum_{\bk \in \N_0^n} \big\vert c^1_{\bl} c^2_{\bk} \big\vert (p-1)^{\f {\vert \bl \vert + \vert \bk \vert} {2}}   \sqrt{ \bl! \bk!} \hat{\Theta}^{\f{ |\bk| + |\bl|}{2}}(4n-1)^{\f{\vert \bk \vert + \vert \bl \vert}{2}}.
\end{align*}
Here, the last estimate relies on \cref{lem-tightness-iterated-hermite-poly}. The fast chaos decay assumption precisely states that the double sum on the right-hand side is finite. This proves the claim.
\end{proof}

\subsection{Proof of \cref{thm-A}}\label{proof-A}
With the preparation work earlier,  together with  \cref{prop:weak_convergence} below concerning convergence in the rough path topology,
we can now prove \cref{thm-A}.

\begin{proof}[of \cref{thm-A}] 
Let  $X^\epsilon$ be as in  \eqref{eq:finite_dim}. In \cref{polynomial-convergence} we have seen the convergence of $X^\epsilon\to \Upsilon W$ in finite-dimensional distributions if $G_k$ are polynomials.
Let $M\in\N$ and $G \in L^2(\R^n, N(0,\Sigma)) $. We denote the function obtained by truncating the Hermite expansion of $G$ at level $M$ by
\begin{equation*}
  G^M(y_s) \define\sum_{\substack{\bl\in\N_0^n\\|\bl|\leq M}}c_{\bl}H_{\bl}(z_s).
\end{equation*}
  Combining \cref{lem-decomposition,prop:main_fdd}, it follows that, for each $M\in\N$,
  \begin{equation*}
	X^{\epsilon, M}\define \left( \sqrt{\epsilon} \int_0^{\frac{t}{\epsilon}} G_1^M(y_s) ds , \dots, \sqrt{\epsilon} \int_0^{\frac{t}{\epsilon}} G_N^M(y_s) ds \right)_{t\in[0,1]}\fdd \Upsilon_M\, W,
  \end{equation*}
  where now
  \begin{equation*}
	 (\Upsilon_M^2)_{i,j}\define \int_0^{\infty}\Expec{G^M_{i}(y_s)  G^M_{j}(y_0) +  G^M_{i}(y_0)  G^M_{j}(y_s)} \,ds.
  \end{equation*}
 The convergence  in the finite-dimensional distributions follows from the Portemanteau theorem,
  see e.g. \cite[Theorem 3.2]{Billingsley}:  we only need to verify the convergence of each component in   probability (since we also have  $\Upsilon_M \to \Upsilon$ and therefore $\Upsilon_M W\Rightarrow\Upsilon W$).

  It remains to send $M\to\infty$. For this we observe that, for each $t\in[0,1]$ and each $i=1,\dots,N$,
  \begin{align*}
	\Expec{\left(\sqrt{\varepsilon}\int_0^{\frac{t}{\varepsilon}}G_i(y_s)\,ds-\sqrt{\varepsilon}\int_0^{\frac{t}{\varepsilon}}G_i^M(y_s)\,ds\right)^2}&=\varepsilon\sum_{\substack{\bk,\bl \in\N_0^{n}\\|\bk|,|\bl|>M}}c_{\bl}c_{\bk}\int_0^{\frac{t}{\varepsilon}}\int_0^{\frac{t}{\varepsilon}}\Expec{H_{\bl}(y_u)H_{\bk}(y_v)}\,du\,dv\\
	&\lesssim\sum_{\substack{\bk,\bl\in\N_0^{n}\\|\bk|,|\bl|>M}} \delta_{ | \bl |, | \bk |} c_{\bl}c_{\bk} \sqrt{\bl !  \bk !}  \hat{\Theta}^{| \bl |}( 2 n -1)^{ \vert \bl \vert}\to 0
  \end{align*}
  uniformly in $\varepsilon>0$ as $M\to\infty$. 

  Fix $\gamma<\frac12-\big(\min_k p_k\big)^{-1}$. 
The weak convergence of $X^\epsilon$ to $\Upsilon W$  in $\C^{\gamma}([0,T],\R^N)$ follows from  the tightness of the left-hand side of \eqref{eq:main_fdd_convergence} in $\C^{\gamma}([0,T],\R^N)$ (upon slightly decreasing $\gamma$). Together with the fact that finite-dimensional distributions uniquely determine Radon  measures on $\C^{\gamma}([0,T],\R^N)$, c.f. \cite[Ex. 7.14.79]{Bogachev2007}, this follows similarly as in \cref{prop:weak_convergence} below. We conclude the asserted weak convergence. 
  
  It is only left to lift this limit theorem to the iterated integrals. 
 By \cref{prop:volterra_cond_decay} and the convergence of $X^\epsilon$ to $\Upsilon W$, the process $X^\epsilon$, with $y\in\vol_n(\beta,\Theta)$ and with 
 $G _k\in L^2(\R^n,N(0,\Sigma))$ satisfying $\herm(G) >2\beta^{-1}$ and the fast chaos decay condition with parameter $\hat{\Theta} (2n-1) +1$, also satisfies the conditions
of \cref{prop:weak_conv_cond_decay}. Applying the latter we conclude the convergence 
  \begin{equation*}
	\X^{\epsilon}=\left( X^{\epsilon},\XX^{\epsilon} \right)\fdd\big(\Upsilon W, ( \Upsilon \otimes \Upsilon ) \WW_{s,t}+\Xi(t-s)\ \big).
  \end{equation*}
  Furthermore,   $\|X^\epsilon_{s,t}\|_{L^p}\lesssim |t-s|^{\f 12}$ by \cref{lem:lp_holder}
  and  $\|\X^\epsilon_{s,t}\|_{L^p}\lesssim |t-s|^{\f 12}$ by
   \cref{prop:iterated-tightness}   for every $\gamma < \f 1 2 - ( \min_k p_k )^{-1}$, with bounds independent of $\epsilon$. Thus, $
  \sup_{\epsilon \in (0, 1] }\Expec{|\X^{\epsilon}|_{\C^\gamma\oplus\C^{2\gamma}}}<\infty
  $, and an application of  \cref{prop:weak_convergence} concludes the proof.
\end{proof}

\section{Rough Homogenization}
Consider the family of ordinary differential equations (ODEs)
\begin{equation}\label{eq:homogenization}
dx_t^\varepsilon=\frac{1}{\sqrt{\varepsilon}}f\big(x_t^\varepsilon,y_{\frac{t}{\varepsilon}})\,dt,\qquad x_0^\varepsilon=x_0,\qquad t\in[0,T],
\end{equation}
depending on a scale parameter $\varepsilon>0$. Let $D_{\bl} f$ denote the derivative of $f$ in the $x$-variable, of order $\bl=(\bl_1, \dots, \bl_d)$. We also write $D_x f$ for the full Jacobian in the $x$-variable. We shall interpret \eqref{eq:homogenization} as a Banach space-valued \textit{rough differential equation} (RDE), see \cref{sec:rough_path} for background.

\subsection{Casting the Homogenization Problem as RDE}

Let us explain how to cast the ODE \eqref{eq:homogenization} as an infinite-dimensional RDE. 
To this end, we set
\begin{equation}\label{eq:x_eps}
  X^{\epsilon} : [0,T] \to \C_b^\alpha(\R^d, \R^d),\qquad t \mapsto \frac{1}{\sqrt{\varepsilon}}\int_0^t f( \cdot, y_{\frac{s}{\varepsilon}})\,ds.
\end{equation}
Thus, $X^{\epsilon}$ is a curve in $\C_b^\alpha(\R^d, \R^d)$. We denote its canonical rough path lift by $\X^{\epsilon}=(X^\varepsilon,\XX^\varepsilon)$, where
\begin{equation}\label{eq:xx_eps}
  \XX^{\varepsilon}_{s,t}=\int_s^t \big(X^{\varepsilon}_r-X^{\varepsilon}_s\big)\otimes dX^{\varepsilon}_r= \f{1}{\epsilon}\int_s^t\left(\int_s^r f(\cdot,
   y_ {\f u \epsilon }). \,du\right)\otimes f(\cdot\cdot,  y_ {\f r \epsilon }))\,dr.
\end{equation}
This notation has to be understood in terms of the embedding $\C_b^\alpha(\R^d,\R^d)\otimes\C_b^\alpha(\R^d,\R^d)\hookrightarrow\C_b^\alpha(\R^d\times\R^d,\R^d\otimes\R^d)$, see \cref{lem:tensor_product}. The function  $ \XX^{\varepsilon}_{s,t}:\R^d\times\R^d\to\R^d\otimes\R^d$ is given by
\begin{equation*}
\XX^{\varepsilon}_{s,t}(x_1,x_2)=\f{1}{\epsilon}\int_s^t\left(\int_s^r f(x_1,   y_ {\f u \epsilon }))\,du\right)\otimes f(x_2,  y_ {\f r \epsilon }))\,dr.
\end{equation*}
Furthermore, we define 
\begin{equation}\label{eq:evaluation_functional}
\delta : \R^d \to \L\big( \C_b^\alpha(\R^d,\R^d),\R^d\big),\qquad  \hbox{with} \quad \delta_x(f) = f(x).
\end{equation}
Occasionally, we shall also use the notation $\delta(x)\define\delta_x$. The operator $\delta$ is sufficiently regular as the next lemma shows: 
\begin{lemma}\label{lem:delta}
For $\alpha\in(0,\infty)\setminus\N$, the operator \eqref{eq:evaluation_functional} inherits the regularity of the image space:
\begin{equation*}
  \delta  \in \C_b^\alpha\Big(\R^d,\L\big( \C_b^\alpha(\R^d,\R^d),\R^d\big)\Big).
\end{equation*}
\end{lemma}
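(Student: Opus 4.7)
The plan is to exhibit the partial derivatives of $\delta$ explicitly as further evaluation-type operators and then read off the required bounds from the definition of $\C_b^\alpha$. Throughout, write $k\define\lfloor\alpha\rfloor$ and $\theta\define\alpha-k\in(0,1)$.

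\textbf{Step 1 (boundedness).} For every $f\in\C_b^\alpha(\R^d,\R^d)$ one has $|\delta_x f|=|f(x)|\leq|f|_{\C_b^\alpha}$, so $\delta_x\in\L\big(\C_b^\alpha(\R^d,\R^d),\R^d\big)$ with operator norm $\leq 1$. In particular $\sup_{x\in\R^d}\|\delta_x\|_\L\leq 1$.

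\textbf{Step 2 (candidate derivatives).} For a multi-index $\bl\in\N_0^d$ with $|\bl|\leq k$ define
\begin{equation*}
\Delta^{\bl}:\R^d\to\L\big(\C_b^\alpha(\R^d,\R^d),\R^d\big),\qquad\Delta^{\bl}_x(f)\define D^{\bl}f(x).
\end{equation*}
Since $|\bl|\leq k<\alpha$, the map $x\mapsto D^{\bl}f(x)$ is well-defined and $|D^{\bl}f(x)|\leq|f|_{\C_b^\alpha}$, so $\Delta^{\bl}_x$ is bounded with $\|\Delta^{\bl}_x\|_\L\leq 1$ uniformly in $x$.

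\textbf{Step 3 (identifying $\Delta^{\bl}$ with $D^{\bl}\delta$).} I claim that $\Delta^{\bl}=D^{\bl}\delta$ in the Fréchet sense, proved inductively on $|\bl|$. Fix $\bm$ with $|\bm|<k$, assume $D^{\bm}\delta=\Delta^{\bm}$, and let $e_j$ be the $j^{\text{th}}$ standard basis vector of $\R^d$. For $f\in\C_b^\alpha$ with $|f|_{\C_b^\alpha}\leq 1$, Taylor's theorem at order $k-|\bm|\geq 1$ applied to the scalar function $t\mapsto D^{\bm}f(x+te_j)$ gives
\begin{equation*}
\big|D^{\bm}f(x+he_j)-D^{\bm}f(x)-h\,D^{\bm+e_j}f(x)\big|\lesssim|h|^{(\alpha-|\bm|)\wedge 2}=o(|h|),
\end{equation*}
where the implicit constant depends only on $\alpha$ and is uniform in $f$ with $|f|_{\C_b^\alpha}\leq 1$ (one uses the Hölder bound on the top derivative of $f$ when $|\bm|=k-1$, and a standard Taylor estimate otherwise). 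Taking the supremum over such $f$ yields $\|\Delta^{\bm}_{x+he_j}-\Delta^{\bm}_x-h\Delta^{\bm+e_j}_x\|_\L=o(|h|)$, so $\Delta^{\bm+e_j}_x=\partial_j\Delta^{\bm}_x$, as required.

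\textbf{Step 4 (Hölder continuity of the top derivative).} For $|\bl|=k$ and $x\neq y$,
\begin{equation*}
\|\Delta^{\bl}_x-\Delta^{\bl}_y\|_{\L}=\sup_{|f|_{\C_b^\alpha}\leq 1}\big|D^{\bl}f(x)-D^{\bl}f(y)\big|\leq|x-y|^{\theta}
\end{equation*}
by the very definition of the seminorm in \eqref{eq:holder_norm}. Combined with the uniform operator-norm bounds of Step 2, this yields $|\delta|_{\C_b^\alpha}\leq C$ and hence $\delta\in\C_b^\alpha\big(\R^d,\L(\C_b^\alpha(\R^d,\R^d),\R^d)\big)$.

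The only genuinely non-routine point is Step 3, where one must upgrade the pointwise-in-$f$ Taylor expansion to one that is uniform over the unit ball of $\C_b^\alpha$; this, however, is immediate because the remainder in Taylor's formula is controlled by the seminorms entering the definition of $|f|_{\C_b^\alpha}$, so the uniformity is built in.
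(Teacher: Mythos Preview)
Your argument is correct and follows essentially the same route as the paper: identify the derivatives of $\delta$ as ``evaluate the corresponding derivative of $f$'' operators and read off all bounds directly from the $\C_b^\alpha$ norm of $f$. The paper only spells out the case $\alpha\in(1,2)$ (showing $D\delta_x(h)(f)=Df(x)h$ via the integral remainder) and declares the rest similar; you carry out the induction over all orders, which is welcome. One small presentational point: in Step~3 you only verify partial derivatives along the coordinate directions $e_j$; to conclude Fr\'echet differentiability you should either run the same Taylor estimate for a general increment $h\in\R^d$ (it goes through verbatim) or note that continuity of all partials---which you have from Steps~2 and~4---already forces Fr\'echet differentiability since the domain is $\R^d$.
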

\begin{proof}
	For simplicity let us only consider $\alpha\in(1, 2)$. All other cases can be treated similarly. We claim that $D\delta_x\in\L\big(\R^d,\L\big(\C_b^\alpha(\R^d,\R^d),\R^d\big)\big)$ acts as
	\begin{equation}\label{eq:derivative_delta}
		D\delta_x(h)(f)=Df(x) h,\qquad f\in\C_b^\alpha\big(\R^d,\R^d\big),\;h\in\R^d.
	\end{equation}
	In fact,
  \begin{align*}
	\big\|\delta_{x+h}-\delta_x-D\delta_x(h) \big\|_{\L(\C_b^\alpha(\R^d,\R^d),\R^d)}&=\sup_{\|f\|_{\C_b^{\alpha}(\R^d,\R^d)}\leq 1}\big|f(x+h)-f(x)-Df(x)h\big|\\
	&\leq |h|\sup_{\|f\|_{\C_b^{\alpha}(\R^d,\R^d)}\leq 1}\int_0^1 \big|Df(x+\varsigma h)-Df(x)\big|\,d\varsigma\leq|h|^{\alpha}.
  \end{align*}
\end{proof}

Next, we write \eqref{eq:homogenization} as a rough differential equation:
\begin{lemma}\label{lem:ode} Fix a $f\in\C_b^{3,0+}(\R^d\times\R^n,\R^d)$ and a deterministic path $y\in\C^{\bar{\gamma}}\big([0,\frac{T}{\varepsilon}],\R^n\big)$ for some $\bar\gamma>0$.
 Then, for any $\varepsilon>0$, the RDE 
\begin{equation}\label{eq:rde_formula}
  dx^{\epsilon}_t =\delta(x^{\epsilon}_t)\,d\X^{\epsilon}_t
\end{equation}
driven by the rough path defined in \eqref{eq:x_eps} and \eqref{eq:xx_eps} is well posed. Furthermore, the unique global solution to \eqref{eq:homogenization} solves the RDE \eqref{eq:rde_formula}.
\end{lemma}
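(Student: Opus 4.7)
The plan has two stages: first, establish well-posedness of the RDE \eqref{eq:rde_formula} via Proposition \ref{prop:rde_existence}; second, identify its unique solution with the unique classical solution of the ODE \eqref{eq:homogenization}, by exploiting the temporal smoothness of $t\mapsto X^\varepsilon_t$ to reduce the rough integral to a Riemann one.

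For the first stage, I would fix $\gamma\in(\tfrac13,\tfrac12)$ and a non-integer $\alpha>\tfrac{1}{\gamma}$ (e.g.\ $\alpha\in(2,3)$). Since $f\in\C_b^{3,0+}$ is $\C^3$ in the first variable and H\"older continuous in the second, and $y$ is continuous on $[0,\tfrac{T}{\varepsilon}]$, the map $s\mapsto f(\cdot,y_{s/\varepsilon})$ is continuous (in fact H\"older) into $\C_b^\alpha(\R^d,\R^d)$. Hence $X^\varepsilon\in C^1\big([0,T],\C_b^\alpha\big)$ with derivative $\dot X^\varepsilon_t=\tfrac{1}{\sqrt\varepsilon}f(\cdot,y_{t/\varepsilon})$, and the canonical lift $\XX^\varepsilon$ defined in \eqref{eq:xx_eps} is a smooth two-parameter field with $\|\XX^\varepsilon_{s,t}\|_{\C_b^\alpha\otimes\C_b^\alpha}=O(|t-s|^2)$ under the reasonable crossnorm of Lemma \ref{lem:tensor_product}. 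Chen's relation
\begin{equation*}
\XX^\varepsilon_{s,t}-\XX^\varepsilon_{s,u}-\XX^\varepsilon_{u,t}=\int_u^t\big[(X^\varepsilon_r-X^\varepsilon_s)-(X^\varepsilon_r-X^\varepsilon_u)\big]\otimes dX^\varepsilon_r=X^\varepsilon_{s,u}\otimes X^\varepsilon_{u,t}
\end{equation*}
follows at once, so $\X^\varepsilon\in\FC^\gamma([0,T],\C_b^\alpha)$. Since Lemma \ref{lem:delta} gives $\delta\in\C_b^\alpha\big(\R^d,\L(\C_b^\alpha,\R^d)\big)$ with $\alpha>\tfrac{1}{\gamma}$, Proposition \ref{prop:rde_existence} yields a unique controlled-path solution $\big(x^\varepsilon,\delta(x^\varepsilon)\big)\in\D^{2\gamma}_{X^\varepsilon}$ of \eqref{eq:rde_formula}.

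For the second stage, I would observe that \eqref{eq:homogenization} admits a unique classical solution on $[0,T]$ by Picard--Lindel\"of, the right-hand side being continuous in $t$ and uniformly Lipschitz in $x$. Because $X^\varepsilon$ is $C^1$ and $\XX^\varepsilon_{u,v}=O(|v-u|^2)$, the corrector $\big(\delta(x^\varepsilon_u)\odot D\delta(x^\varepsilon_u)\big)\XX^\varepsilon_{u,v}$ in the compensated Riemann sum \eqref{eq:enhanced} contributes $O(|\CP|)$ and vanishes as the mesh $|\CP|\to 0$; meanwhile the leading part $\sum\delta(x^\varepsilon_u)X^\varepsilon_{u,v}$ converges to the classical integral $\int_0^t\delta(x^\varepsilon_s)\dot X^\varepsilon_s\,ds$. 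Using $\delta_x(g)=g(x)$, this equals $\tfrac{1}{\sqrt\varepsilon}\int_0^t f(x^\varepsilon_s,y_{s/\varepsilon})\,ds$, so the classical ODE solution also solves \eqref{eq:rde_formula}. Uniqueness of the RDE solution then forces the two to coincide.

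The main obstacle is not analytical but bookkeeping: one must ensure that all Banach-valued and tensorial manipulations are consistent with the rough-path formalism of \cref{sec:rough_path}. This is exactly what Lemma \ref{lem:tensor_product} handles, by equipping $\C_b^\alpha\otimes\C_b^\alpha$ with a reasonable crossnorm under which it continuously embeds into $\C_b^\alpha(\R^d\times\R^d,\R^d\otimes\R^d)$, so that the formula for $\XX^\varepsilon_{s,t}$ can be evaluated pointwise and so that Proposition \ref{prop:rde_existence} applies without the projective-tensor assumption; together with Lemma \ref{lem:delta}, which delivers the $\C_b^{1/\gamma+}$-regularity of the evaluation functional required by Proposition \ref{prop:rde_existence}. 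Once these two ingredients are in place, the argument proceeds as sketched.
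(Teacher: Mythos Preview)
Your proof is correct and follows essentially the same route as the paper: well-posedness via \cref{prop:rde_existence} and \cref{lem:delta}, then identification of the RDE and ODE solutions by exploiting the $C^1$ time-regularity of $X^\varepsilon$. The only cosmetic difference is in the identification step: the paper introduces the auxiliary germ $\bar\Xi^\varepsilon_{s,t}=f(x_s^\varepsilon,y_{s/\varepsilon})(t-s)$ and bounds $|\Xi^\varepsilon_{s,t}-\bar\Xi^\varepsilon_{s,t}|\lesssim|t-s|^{1+\eta}$ to invoke the sewing lemma, whereas you pass directly to the limit in the compensated Riemann sum---both amount to the same observation that the rough integral against a smooth driver collapses to a Riemann one.
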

\begin{proof}
Well-posedness of \eqref{eq:rde_formula} follows immediately from \cref{prop:rde_existence,lem:delta}. For the last part set
  \begin{align*}
	\Xi_{s,t}^\varepsilon &\define \delta(x_s^\varepsilon)(X_t^\varepsilon-X_s^\varepsilon)+\big(\delta(x_s^\varepsilon)\odot D\delta(x_s^\varepsilon)\big) \XX^\varepsilon_{s,t},\\
	 \bar \Xi_{s,t}^\varepsilon &\define f(x_s^\varepsilon,y_{\frac{s}{\varepsilon}})(t-s),
  \end{align*}
  where $D\delta$ acts by \eqref{eq:derivative_delta}. By the sewing lemma, it suffices to show that,  for some $\eta>1$,
  \begin{equation*}
	\big|\Xi_{s,t}^\varepsilon-\bar \Xi_{s,t}^\varepsilon\big|\lesssim |t-s|^\eta,
  \end{equation*}
   as it implies the integrals obtained by $ \Xi_{s,t}^\varepsilon$  and $\bar \Xi_{s,t}^\varepsilon$ are the same, see  \cite{Gubinelli-lemma,Feyel-delaPradelle,Friz-Hairer}. Note that, since $\varepsilon>0$ is fixed, the prefactor in this bound is allowed to diverge as $\varepsilon\to 0$. Remember that, by assumption, we have $y\in\C^{\bar{\gamma}}\big([0,\frac{T}{\varepsilon}],\R^m\big)$ with probability $1$ and $f\in\C_b^{3,0+}$. It follows that, for some $\eta>0$,
  \begin{align*}
	\big|\Xi_{s,t}^\varepsilon-\bar \Xi_{s,t}^\varepsilon\big|&\leq\frac{1}{\sqrt{\varepsilon}}\int_s^t\big|f\big(x_s^\varepsilon,y_{\frac{r}{\varepsilon}}\big)-f\big(x_s^\varepsilon,y_{\frac{s}{\varepsilon}}\big)\big|\,dr+\frac{1}{\varepsilon}\int_s^t\left(\int_s^r \big|D_x f\big(x_s^\varepsilon,y_{\frac{r}{\varepsilon}}\big)\big|\big|f\big(x_s^\varepsilon,y_{\frac{u}{\varepsilon}}\big)\big|\,du\right)\,dr\\
	&\lesssim  |t-s|^{1+\eta},
  \end{align*}
  as required.
\end{proof}

\subsection{Statement of the Result}

We begin with a simple lemma:
\begin{lemma}\label{lem:hermite_rank_derivative}
  Let $f:\R^d\times\R^n\to\R^d$ be differentiable such that $f(x,\cdot)\in L^2\big(\R^n,N(0,\Sigma);\R^n\big)$ for every $x\in\R^d$. Then, if $\sup_{|x|\leq R}\|D_xf(x,\cdot)\|_{L^2(\R^n,N(0,\Sigma))}<\infty$   for each $R>0$, 
  \begin{equation*}
	 \inf_x\herm\big(D_xf(x,\cdot)\big)\geq\inf_x\herm\big(f(x,\cdot)\big).
   \end{equation*} 
\end{lemma}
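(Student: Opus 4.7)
My plan is to exploit the fact that Hermite expansion coefficients are recovered by testing against Hermite polynomials, and that differentiation in $x$ commutes with this testing under the stated $L^2$ bound.

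Write $m \define \inf_{x\in\R^d}\herm\big(f(x,\cdot)\big)$. Expand, via the normalized process $z$ as in \eqref{eq:hermite_expansion},
\[
f(x,y_t) = \sum_{\bl\in\N_0^n} c_{\bl}(x) H_{\bl}(z_t),\qquad c_{\bl}(x)=\frac{1}{\bl!}\int_{\R^n} f\big(x,OD^{\frac12}z\big) H_{\bl}(z)\,N(0,\id)(dz).
\]
By definition of $m$, the coefficient $c_{\bl}(x)$ vanishes identically in $x$ whenever $|\bl|<m$. The plan is to show that the Hermite coefficients of $D_x f(x,\cdot)$ are exactly $D_x c_{\bl}(x)$; once this is established, the coefficients vanish for $|\bl|<m$ as well, yielding $\herm\big(D_xf(x,\cdot)\big)\geq m$ at every $x$.

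The main step is therefore the interchange of $D_x$ with the integral defining $c_{\bl}(x)$. This is where the hypothesis $\sup_{|x|\leq R}\|D_x f(x,\cdot)\|_{L^2(\R^n,N(0,\Sigma))}<\infty$ comes in: for any compact set $K\subset\R^d$ and any line segment $[x,x+h]\subset K$, we have by Cauchy--Schwarz
\[
\bigg|\int_{\R^n}\!\big(D_xf(x+\varsigma h,y)h\big)H_{\bl}(y)\,N(0,\Sigma)(dy)\bigg|\leq |h|\cdot\big\|D_xf(x+\varsigma h,\cdot)\big\|_{L^2(N(0,\Sigma))}\cdot\|H_{\bl}\|_{L^2(N(0,\Sigma))},
\]
uniformly in $\varsigma\in[0,1]$. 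Dominated convergence then allows one to differentiate under the integral sign, giving
\[
D_x c_{\bl}(x)=\frac{1}{\bl!}\int_{\R^n}\big(D_xf\big)\big(x,OD^{\frac12}z\big)H_{\bl}(z)\,N(0,\id)(dz),
\]
which is precisely the $\bl^{\text{th}}$ Hermite coefficient of $D_xf(x,\cdot)$ viewed as an element of $L^2(\R^n,N(0,\Sigma);\L(\R^d,\R^d))$ (applied componentwise).

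To conclude: if $|\bl|<m$ then $c_{\bl}\equiv 0$ on $\R^d$, hence $D_x c_{\bl}\equiv 0$, hence the Hermite expansion of $D_xf(x,\cdot)$ contains no term of degree below $m$ at any $x$. Taking the infimum yields $\inf_x\herm\big(D_xf(x,\cdot)\big)\geq m$, as asserted. The only mildly delicate point is the dominated convergence argument above; everything else is bookkeeping.
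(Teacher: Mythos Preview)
Your proposal is correct and follows essentially the same approach as the paper: both argue that the $\bl^{\text{th}}$ Hermite coefficient of $D_xf(x,\cdot)$ equals $D_x c_{\bl}(x)$, so that $c_{\bl}\equiv 0$ forces the corresponding coefficient of the derivative to vanish. The paper compresses this into the single line $\Braket{D_x f(x,\cdot), H_{\bl}}=D_x\Braket{f(x,\cdot),H_{\bl}}$, whereas you spell out the justification of the interchange via the local $L^2$ bound; your added detail is exactly what the hypothesis is there for.
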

\begin{proof} 
  If $\braket{f(x,\cdot),H_{\bl}}=0$ for some multi-index $\bl\in\N_0^n$, then also
$	\Braket{D_x f(x,\cdot), H_{\bl}}=D_x\Braket{f(x,\cdot),H_{\bl}}=0$.
\end{proof}

We define the Hermite rank of $f(x,\cdot)$ to be
\begin{equation*}
	\herm(f(x,\cdot))=\inf_{i\in\{1, \dots, n\}} \herm (f_i(x, \cdot)).
\end{equation*}
Let $c_{\bl}=(c_{\bl}^1, \dots, c_{\bl}^n)$ and $c_{\bl}^{\bk}=(c_{\bl}^{1, \bk}, \dots, c_{\bl}^{n, \bk})$ where for $i=1, \dots, n$,
\begin{equation*}
	f_i(x, \cdot) =\sum_{\bl\in \N^n_0,  |\bl|\ge \herm(f_i(x, \cdot)) }   c_{\bl}^i(x)H_{\bl}, \qquad 
D_{\bk}f_i(x, \cdot) =\sum_{\bl\in \N^n_0,  |\bl|\ge \herm(D_{\bk} f_i(x, \cdot)) }   c_{\bl}^{i,\bk}(x)H_{\bl}.
\end{equation*}

\begin{condition}\label{cond:homogenization}
	Let $\gamma \in \left( \f 1 3, \f 1 2\right)$ and $p>\f{2}{1-2 \gamma}$. We impose the following conditions on the data in \eqref{eq:homogenization}:
	\begin{enumerate}
		\item The fast process $y$ has almost sure sample paths in $\C^{0+}\big([0,T],\R^n\big)$. Moreover, there are $\beta,\Theta>0$ such that $y\in\vol_n(\beta,\Theta)$.
		\item $f\in\C_b^{3,0+}\big(\R^d\times\R^n,\R^d\big)$.
		 For each $j=1,\dots,d$, $0\le |\bk|\le3$,  we have  $\inf_x\herm\big(f_j(x,\cdot)\big)>\f 2 \beta$,
		 $D_{\bk} f_j(x,\cdot)\in L^2\big(\R^n,N(0,\Sigma)\big)$, and 
		\begin{equation}\label{supremum-norm-1}
		\sum_{\bl\in\N_0^n}\sup_{x\in B_R}|c^{j,\bk}_{\bl}(x)|(4n-1)^{\frac{|\bl|}{2}} (p-1)^{\f{ |\bl|} {2}} \sqrt{\bl!} \hat{\Theta}^{\f{ \vert \bl \vert} {2}}  <\infty\qquad\forall\,R>0
		\end{equation}
		where $c^{j,\bk}$ are the coefficients in the Hermite expansion of $D_{\bk} f_j$. 
	\end{enumerate}
\end{condition}

 \begin{lemma}
	The condition \eqref{supremum-norm-1} holds if for some $\theta>\hat{\Theta}(4n-1)(p-1)$ and every $j=1,\dots,d$, $|\bk|\leq 3$,
	\begin{equation}\label{supremum-l1-norm}      \sup_{x\in B_R} \sum_{\bl\in\N_0^n} |c^{j,\bk}_{\bl}(x)|\;\theta^{\f {|\bl|}{2}}\sqrt{\bl!} 
	<\infty.
	\end{equation}
	
\end{lemma}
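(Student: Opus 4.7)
The plan is to exploit the strict inequality $\theta>\hat{\Theta}(4n-1)(p-1)$ to pass from the termwise control implicit in \eqref{supremum-l1-norm} to the summable bound required by \eqref{supremum-norm-1}. The conceptual subtlety is that the hypothesis places the supremum \emph{outside} the sum while the conclusion places it \emph{inside}; since only $\sup_{x}\sum_{\bl}\leq\sum_{\bl}\sup_{x}$ holds in general, the swap cannot be performed directly and the gap between $\theta$ and $\hat{\Theta}(4n-1)(p-1)$ has to supply the missing summability. This is the one real point that has to be identified; everything else is bookkeeping.

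First I would upgrade \eqref{supremum-l1-norm} to a uniform termwise estimate. Since every summand $|c^{j,\bk}_{\bl}(x)|\theta^{|\bl|/2}\sqrt{\bl!}$ is non-negative, each one is dominated by the entire series, so writing $C_R\define\sup_{x\in B_R}\sum_{\bl}|c^{j,\bk}_{\bl}(x)|\theta^{|\bl|/2}\sqrt{\bl!}<\infty$ one obtains, for every $\bl\in\N_0^n$,
\begin{equation*}
\sup_{x\in B_R}|c^{j,\bk}_{\bl}(x)|\leq C_R\,\theta^{-|\bl|/2}(\bl!)^{-1/2}.
\end{equation*}
This is the key maneuver: non-negativity promotes the $\sup$-of-sum control into a clean term-by-term decay estimate with no loss.

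Plugging this bound into the left-hand side of \eqref{supremum-norm-1} and setting $r\define\hat{\Theta}(4n-1)(p-1)/\theta$, which lies strictly in $(0,1)$ by hypothesis, the combinatorial factors $\sqrt{\bl!}$ cancel and one is left with
\begin{equation*}
\sum_{\bl\in\N_0^n}\sup_{x\in B_R}|c^{j,\bk}_{\bl}(x)|\bigl(\hat{\Theta}(4n-1)(p-1)\bigr)^{|\bl|/2}\sqrt{\bl!}\;\leq\;C_R\sum_{\bl\in\N_0^n}r^{|\bl|/2}.
\end{equation*}

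Finally I would verify convergence of the right-hand side by grouping terms according to total degree $k=|\bl|$. The number of multi-indices $\bl\in\N_0^n$ with $|\bl|=k$ equals $\binom{k+n-1}{n-1}$, which is polynomial in $k$, so
\begin{equation*}
\sum_{\bl\in\N_0^n}r^{|\bl|/2}=\sum_{k\geq 0}\binom{k+n-1}{n-1}r^{k/2}
\end{equation*}
is a convergent power series in $\sqrt{r}<1$. This yields \eqref{supremum-norm-1} for every $R>0$ and completes the argument. There is no genuine analytic obstacle; once the non-negativity observation in the first step is identified, the strict inequality provides the geometric decay that closes the estimate.
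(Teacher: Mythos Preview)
Your proof is correct and follows essentially the same route as the paper: extract a uniform termwise bound $\sup_{x\in B_R}|c^{j,\bk}_{\bl}(x)|\leq C_R\,\theta^{-|\bl|/2}(\bl!)^{-1/2}$ from the hypothesis via non-negativity, then sum the resulting geometric series using the polynomial count of multi-indices of fixed total degree. The paper phrases the same idea through an auxiliary parameter $a>1$ and a $\sup_x\sup_{\bl}$ intermediate bound, but the content is identical; your presentation is arguably more direct.
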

\begin{proof}
	Firstly for any $\theta>0, a>1$, $$   \sum_{\bl\in\N_0^n}    \sup_{x\in B_R}  |c^{j,\bk}_{\bl}(x)| \theta^{ \f{ |\bl| } {2} }\sqrt{\bl!} \le  \sup_{x\in B_R}  
	\sup_{\bl\in\N_0^n} |c^{j,\bk}_{\bl}(x)| \theta^{\f{ |\bl|} {2} }a^{ \f{| \bl|}{2}}\sqrt{\bl!}  \sum_{\bl\in\N_0^n} a^{- \f{ |\bl|}{2}}\; .$$
	Since the composition number of $|\bl|$ into $n$ blocks is at most of the order $|2\bl|^n$, we have $$ \sum_{\bl\in\N_0^n} a ^{-\f{ |\bl|}{2}}\lesssim \sum_{\ell=0}^\infty a^{- \f{\ell}{2}} (2\ell)^n<\infty.$$
	Hence \eqref{supremum-norm-1} follows from $ \sup_{x\in B_R}   \sup_{\bl\in\N_0^n} |c^{j,\bk}_{\bl}(x)|\; \hat{\Theta}^{\f{ |\bl|}{2}} a^{\f{|\bl|}{2}}\sqrt{\bl!} <\infty$, the latter clearly follows from $ \sup_{x\in B_R} \sum_{\bl\in\N_0^n} |c^{j,\bk}_{\bl}(x)|\; \theta^{\f{ |\bl|}{2}}\sqrt{\bl!} 
	<\infty$. In particular,	
	\begin{equation}\label{supremum-norm}      \sup_{x\in B_R}  \sup_{\bl\in\N_0^n} |c^{j,\bk}_{\bl}(x)| \theta^{\f{ |\bl|}{2}}\sqrt{\bl!} 
	<\infty,
	\end{equation}
	concluding the proof.
\end{proof}

We have the following limit theorem on the solution to \eqref{eq:homogenization}: 

\begin{theorem}\label{thm:homogenization}
	Consider the ODE \eqref{eq:homogenization} with \cref{cond:homogenization} for $\gamma\in\big(\frac13,\frac12\big)$ in place. Then, for each $\varepsilon>0$, there is a unique pathwise solution $(x_t^\varepsilon)_{t\in[0,T]}$. For $i,j=1,\dots,d$, let
	\begin{align*}
	\sigma_{i,j}(x,z) &\define\int_0^{\infty}\Expec{f_{i}(x,y_r)  f_{j}(z,y_0) +  f_{i}(x,y_0)  f_{j}(z,y_r)} \,dr, \\ 
	\Lambda_{i,j} (x,z)&\define\int_0^\infty\Expec{f_i(x,y_0) f_j(z,y_r)}\,dr, \\
	\Gamma(x)&\define \int_0^\infty \Expec{ (Df(\cdot,y_r)_x ( f(x,y_0))}\,dr. 
	\end{align*}
	Then the following hold:
	\begin{enumerate}
		\item Suppose that $f(\cdot,y)$ have common compact support for every $y\in \R^n$. Then there is a limiting rough path $\X\in\FC^{\gamma}\big([0,T],\C_b^{3-}(\R^d,\R^d)\big)$  such that $\X^\epsilon$ converges weakly to $\X$.	Furthermore $\X=(X, \XX_{s,t}+(t-s)\Lambda) $, where $X$ is a Gaussian field with covariance $\sigma_{i,j}(x,z)(t\wedge s)$. In particular, for any (random) initial condition $x_0$ independent of $(y_t)_{t\geq 0}$, as $\varepsilon\to 0$, the solution $x^\varepsilon$ of \eqref{eq:homogenization} converges weakly in $\C^{\gamma}\big([0,T],\R^d\big)$ to the solution of the RDE
		\begin{equation*}
			dx_t=\delta(x_t)\,d\X_t.
		\end{equation*}

		\item\label{it:sde} Suppose furthermore that $x_0\in L^\infty$ is independent of $(y_t)_{t\geq 0}$. Then the solution of \eqref{eq:homogenization} converges weakly in $\C\big([0,T],\R^d\big)$ to the unique solution of the following Kunita type It\^o SDE:
	\begin{equation}\label{eq:limiting_kunita}
		dx_t=\Gamma(x_t) dt+F(x_t, dt).
	\end{equation}
	Here  $F(x,\cdot)\define X(x)$ is a martingale with spatial parameters and with characteristics $A_{i,j}(x,z,t)=\sigma_{i,j}(x,z)t$. Furthermore, if $\phi^\epsilon$ denotes the solution flow, the $N$-point motion $(\phi^\epsilon_t(x_1), \dots,\phi^\epsilon_t(x_N))$ converges to the $N$-point motion of the limiting equation.
	\end{enumerate}
\end{theorem}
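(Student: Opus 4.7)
The overall strategy is to exploit \cref{lem:ode}, which casts \eqref{eq:homogenization} as a Banach-space valued RDE driven by $\X^\epsilon$ taking values in $\C_b^\alpha(\R^d,\R^d)$. Combined with the continuity of the solution map in \cref{prop:rde_existence}, both parts of the theorem reduce to establishing weak convergence $\X^\epsilon \Rightarrow \X$ in the rough path space $\FC^\gamma\big([0,T], \C_b^{3-}(\R^d, \R^d)\big)$. This mirrors precisely the proof of \cref{thm-A}, but now lifted to the function space.

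For convergence of the finite-dimensional distributions of the $\C_b^{3-}$-valued process, fix an arbitrary finite family of evaluation points $x_1, \dots, x_N \in \R^d$ and apply \cref{thm-A} to the $\R^{Nd}$-valued functional $\big(X^\epsilon(x_k)\big)_{k=1}^N$, with scalar drivers $G_{k,i}(y) \define f_i(x_k, y)$ for $i=1,\dots,d$ and $k=1,\dots,N$. \Cref{cond:homogenization} guarantees, uniformly in $k$, both that $\herm(G_{k,i}) > 2\beta^{-1}$ and the required fast chaos decay, so \cref{thm-A} yields joint convergence of the pointwise evaluations together with their iterated integrals to the claimed Gaussian limit, with covariance $\sigma_{i,j}(x,z)(s\wedge t)$ and Lévy area correction of the form in \eqref{eq:area}. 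Rewriting in It\^o form via \cref{rem:stratonovich_correction} identifies the area component as $\XX_{s,t} + (t-s)\Lambda$, matching the statement.

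To upgrade the finite-dimensional convergence to a functional rough path limit, tightness in $\FC^\gamma\big([0,T], \C_b^{3-}\big)$ is needed. The compact support hypothesis on $f(\cdot, y)$ confines everything to a fixed compact $K\subset\R^d$, so it suffices to control the $\C_b^\alpha$-norms for $\alpha<3$. The key is to establish uniform $L^p$ moment bounds
\begin{equation*}
\big\|X^\epsilon_{s,t}\big\|_{\C_b^\alpha}^p \lesssim |t-s|^{p/2}, \qquad \big\|\XX^\epsilon_{s,t}\big\|_{\C_b^\alpha\otimes\C_b^\alpha}^p \lesssim |t-s|^p,
\end{equation*}
uniformly in $\epsilon$, obtained by applying \cref{lem:lp_holder} and \cref{prop:iterated-tightness} to pointwise evaluations and to spatial finite differences of $X^\epsilon$. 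The uniform-in-$x$ coefficient bound \eqref{supremum-norm-1} for derivatives $D_{\bk} f$ up to order $3$ is precisely what is needed to interchange the supremum over $x\in K$ with the Hermite-series estimates underlying those lemmas, while \cref{lem:tensor_product} endows $\C_b^\alpha\otimes\C_b^\alpha$ with a compatible reasonable crossnorm. Garsia--Rodemich--Rumsey combined with \cref{prop:weak_convergence} then yields the claimed weak convergence in $\FC^\gamma$, and \cref{prop:rde_existence} transfers this to weak convergence of $x^\epsilon$ to the solution of $dx_t = \delta(x_t)\,d\X_t$, even for random initial data $x_0$ independent of $y$.

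For part \ref{it:sde}, the limiting driver $X$ is a continuous Gaussian martingale field with characteristics $A_{i,j}(x,z,t)=\sigma_{i,j}(x,z)t$, and the Itô-form identity from \cref{rem:stratonovich_correction} together with the evaluation of $\big(\delta\odot D\delta\big)(\id\otimes\Lambda)$ identifies the resulting RDE with the Kunita SDE \eqref{eq:limiting_kunita}, with drift $\Gamma$ emerging from the Stratonovich--Itô correction. The $N$-point motion convergence is then automatic: since the same $\C_b^{3-}$-valued rough path $\X^\epsilon$ drives every initial condition simultaneously, the map $(x_1,\dots,x_N)\mapsto\big(\Phi(x_1,\X^\epsilon),\dots,\Phi(x_N,\X^\epsilon)\big)$ is continuous, and joint convergence for any finite family of starting points follows from the single functional limit. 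The principal obstacle is the tightness step — specifically, bounding $\|\XX^\epsilon_{s,t}\|_{\C_b^\alpha\otimes\C_b^\alpha}$, where one must track spatial Hölder regularity of the double iterated integral while keeping the bounds uniform in $\epsilon$; this is exactly the point at which the strengthened fast chaos decay assumption \eqref{supremum-norm-1} is indispensable.
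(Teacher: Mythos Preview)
Your approach to part 1 is essentially the one taken in the paper: cast the ODE as an RDE via \cref{lem:ode}, use \cref{thm-A} for the finite-dimensional space-time distributions (including the area correction), establish the uniform moment bounds by pushing the supremum over $x$ through the Hermite series via \eqref{supremum-norm-1} and hypercontractivity, and conclude with \cref{prop:weak_convergence} plus the continuity of the solution map. This is correct.

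There is, however, a genuine gap in your treatment of part \ref{it:sde}. The compact support hypothesis is imposed only in part 1; part \ref{it:sde} is meant to hold for general $f$ satisfying \cref{cond:homogenization}, with merely $x_0\in L^\infty$. Your argument that ``the $N$-point motion convergence is then automatic'' from the single rough-path limit $\X^\epsilon\Rightarrow\X$ presupposes that this limit has already been established in $\FC^\gamma\big([0,T],\C_b^{3-}(\R^d,\R^d)\big)$. But the tightness step --- in particular the compactness underlying \cref{prop:weak_convergence} --- relies essentially on the spatial variable living in a fixed compact ball (cf.\ \cref{lem:compactness} and \cref{ex:not_compact}). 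Without compact support of $f(\cdot,y)$ you have no such rough-path convergence to appeal to.

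The paper closes this gap by a localization argument: introduce a cutoff $\eta_R$ with $\eta_R\equiv 1$ on $\overline{B}_R$, set $f_R(x,y)=\eta_R(x)f(x,y)$, and apply part 1 to obtain $x^{R,\epsilon}\Rightarrow x^R$. One must then identify $x^R$ as the solution of a Kunita-type SDE with characteristics $\sigma_R,\Gamma_R$, verify enough regularity on these characteristics to invoke Kunita's stability theorem for the limit $R\to\infty$, and finally run a Portmanteau argument of the form
\begin{equation*}
\limsup_{\varepsilon\to 0}\P(x^\varepsilon\in A)\leq \P(x^R\in A)+\P(|x^R|_\infty\geq R),
\end{equation*}
using $x_0\in L^\infty$ to control the second term as $R\to\infty$. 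This two-layer limit ($\varepsilon\to 0$ then $R\to\infty$) is where the assumption $x_0\in L^\infty$ actually enters, and it is missing from your outline.
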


Let us record a few remarks on \cref{thm:homogenization}:

\begin{remark}
	\leavevmode
	\begin{itemize}
		\item The limiting equation \eqref{eq:limiting_kunita} is equivalent to the classical It\^o SDE
		\begin{equation*}
		dx_t= \Gamma(x_t) \, dt + \sqrt{\sigma}(x_t)\, dW_t.
		\end{equation*}
		\item \Cref{thm:homogenization} extends the results of \cite{Gehringer-Li-tagged} from product to non-product drifts and from one- to multi-dimensional environmental fast-scale noise.
		
		\item We observe that---unlike the one-dimensional work of the first two authors of this article---the limiting rough path has a non-vanishing L\'evy area. The reason for this is the non-reversibility of the Gaussian process $(y_t)_{t\geq 0}$ in dimension $n\geq 2$. Indeed,  a \textit{one-dimensional}, stationary Gaussian process is \textit{always} reversible in the sense that, for each $T>0$,
		\begin{equation*}
		(y_t)_{t\in[0,T]}\overset{d}{=}(y_{T-t})_{t\in[0,T]}.
		\end{equation*}
		In higher dimensions, stationarity of a Gaussian process is a genuinely weaker requirement than reversibility. We also note that the presence of the non-trivial area term matches the findings of \cite{Deuschel-Orenshtein-Perkowski}.
	\end{itemize}
\end{remark}

\subsection{Weak Convergence in Rough Path Spaces}

This method of lifting the RDE on $\R^d$ to a Banach space of functions has previously been successfully employed in the very nice work of Kelly and Melbourne \cite{Kelly-Melbourne}. Since there is however a minor inaccuracy in that article (see \cref{ex:not_compact}) below, we choose to recall an appropriate amount of detail of their approach. Without further notice, we shall assume the regularity assumptions of \cref{cond:homogenization} in the sequel.
Let us first elaborate on the minor inaccuracy in the work of Kelly and Melbourne, then present results leading to tightness of $\X^\epsilon$ in the rough path spaces over $\C^3_R(\R^d, \R^d)$, the space of smooth functions with compact support in $B_R$. This is \Cref{prop:weak_convergence} below which resolves a question raised in \cite{Kelly-Melbourne} and allows us to bypass the martingale method problem as well as to obtain a Kunita type SDE in the limit.

The example below shows that tightness in rough path spaces over infinite-dimensional Banach spaces is a touchy business. 

\begin{example}\label{ex:not_compact}
  When proving tightness of the driving rough path $\X^\varepsilon=(X^\varepsilon,\XX^\varepsilon)$ in \cite[Corollary 5.9]{Kelly-Melbourne} (there denoted by $\W^\varepsilon$), the authors assert that the unit ball in $\FC^{\gamma^\prime}\big([0,T],\C_b^{\alpha}(\R^d,\R^d)\big)$ is compact in the metric of $\FC^{\gamma^\prime}\big([0,T],\C_b^{\alpha}(\R^d,\R^d)\big)$ for each $\gamma<\gamma^\prime$. This, they claim, should follow by a standard Arzel\`a-Ascoli argument as in \cite[Chapter 5]{Friz-Victoir}. In the latter, however, the authors only consider rough paths with values in a \textit{finite}-dimensional Euclidean space. In fact, for any infinite-dimensional Banach space $\CX$, the embedding
$    \FC^{\gamma^\prime}\big([0,T],\CX\big)\hookrightarrow\FC^{\gamma}\big([0,T],\CX\big)$  is not compact. To see this, let us take $T=1$ and a sequence of points $\{x_n\}_{n\in\N}\subset B_{\CX}$, given by Riesz's lemma, with $|x_m-x_n|\geq\frac12$ for any $m\neq n$. 
Set $F^n_t\define t x_n$ and $   \FF^n_{s,t}\define\frac{(t-s)^2}{2}(x_n\otimes x_n)$.
   Then ${\big\{\F_n=(F^n,\FF^n)\big\}_{n\in\N}}$  is  bounded in $\FC^\gamma\big([0,T],\CX\big)$, but not compact, as
$   |\F_m-\F_n|_{\C^{\gamma}\oplus\C^{2\gamma}}\geq\frac12$ for all $m\neq n$.

\end{example}

A remedy of this lack of compactness was presented in the recent preprint \cite{Chevyrev2020}. There, the authors worked in the $p$-variation setting, but the arguments of course transfer to H\"older rough paths. We recapitulate a streamlined version of the argument in the sequel; mainly for the reader's convenience, but also to fix some notations. 

Let $R>0$. We let $\C^\alpha_R$ denote the H\"older functions supported in the ball of radius $R$, 
\begin{equation*}
  \C^\alpha_R(\R^d,\R^d)\define\big\{f\in\CX:\,\mathrm{supp}(f)\subset B_R\big\}, \qquad  \CX=\C_b^\alpha(\R^d,\R^d).
\end{equation*}

\begin{definition}\label{def:space_time_distributions}
  Let $\X=(X,\XX)$ and $\Y=(Y,\YY)$ be random variables with values in $\FC^\gamma\big([0,T],\CX\big)$. We say that $\X$ and $\Y$ are equal in \textit{finite-dimensional (space-time) distributions} if, for each $n\in\N$, $0\leq t_1<\cdots<t_m\leq T$, and $x_1^i,\dots, x_n^i\in\R^d$ ($i=1,2$),
  \begin{equation*}
	\begin{pmatrix}
	  \big(X_{t_1}(x_1^1),\XX_{t_1}(x_1^1,x_1^2)\big)\\
	  \vdots\\
	  \big(X_{t_n}(x_n^1),\XX_{t_n}(x_n^1,x_n^2)\big)
	\end{pmatrix}\overset{d}{=}
	\begin{pmatrix}
	  \big(Y_{t_1}(x_1^1),\YY_{t_1}(x_1^1,x_1^2)\big)\\
	  \vdots\\
	  \big(Y_{t_n}(x_n^1),\YY_{t_n}(x_n^1,x_n^2)\big)
	\end{pmatrix}
  \end{equation*}
  as $\Big(\R^d\oplus\big(\R^d\otimes\R^d\big)\Big)^n$-valued random variables.
\end{definition}

Our main weak convergence result of this section is as follows:
\begin{proposition}\label{prop:weak_convergence}
  Let $\gamma\in\big(\frac13,1\big]$, $\alpha>0$, and $R>0$. Let $(\X^n)_{n\in\N}$ be a sequence of random variables with values in $\FC^\gamma\big([0,T],\C_R^\alpha(\R^d,\R^d)\big)$. If there is a $p\geq 1$ such that
  \begin{equation*}
	\sup_{n\in\N}\Expec{|\X^n|^{p}_{\C^\gamma\oplus\C^{2\gamma}}}<\infty
  \end{equation*}
  and if the finite-dimensional space-time distributions of any weak limit point of $(\X^n)_{n\in\N}$ coincide, then there is a random variable $\X\in\FC^{\gamma-}\big([0,T],\C_R^{\alpha-}(\R^d,\R^d)\big)$ such that $\X^n\Rightarrow\X$ weakly.
\end{proposition}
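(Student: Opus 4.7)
The plan is to establish tightness of $(\X^n)$ in a slightly weaker rough path topology and then invoke the finite-dimensional distribution hypothesis to pin down the unique limit. The key observation compensating for the issue identified in \cref{ex:not_compact} is that the compact support restriction makes the embedding $\C_R^\alpha(\R^d,\R^d)\hookrightarrow\C_R^{\alpha'}(\R^d,\R^d)$ compact for every $\alpha'<\alpha$: functions in $\C_R^\alpha$ are all supported in the fixed compact set $B_R$, so classical Arzel\`a--Ascoli combined with H\"older interpolation furnishes the compactness. This is the genuine point of using $\C_R^\alpha$ in place of $\C_b^\alpha$.

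Combining this with the standard rough-path interpolation fact, that bounded sets of $\FC^\gamma([0,T],\CY)$ are relatively compact in $\FC^{\gamma'}([0,T],\CY)$ for $\gamma'<\gamma$ and any Banach space $\CY$ (proved in the usual way by H\"older interpolation between $\C^\gamma$ and uniform topology, applied to both levels $X$ and $\XX$, with the tensor norm inherited from \cref{lem:tensor_product}), I would deduce that the closed ball of radius $M$ in $\FC^\gamma([0,T],\C_R^\alpha)$ is relatively compact in $\FC^{\gamma'}([0,T],\C_R^{\alpha'})$ for any $\gamma'<\gamma,\alpha'<\alpha$. Markov's inequality together with the moment bound then yields
\[
\sup_{n\in\N}\P\big(|\X^n|_{\C^\gamma\oplus\C^{2\gamma}}>M\big)\leq M^{-p}\sup_{n\in\N}\Expec{|\X^n|^p_{\C^\gamma\oplus\C^{2\gamma}}}\xrightarrow{M\to\infty} 0,
\]
so the laws of $(\X^n)$ are tight in $\FC^{\gamma'}([0,T],\C_R^{\alpha'})$.

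To conclude, Prokhorov's theorem gives that every subsequence admits a further weakly convergent sub-subsequence with limit supported in $\FC^{\gamma'}([0,T],\C_R^{\alpha'})$. I would then argue that the finite-dimensional space-time evaluations $\X\mapsto\bigl(X_t(x),\XX_{s,t}(x_1,x_2)\bigr)$ are continuous functionals on this space (for fixed $t,s,x,x_1,x_2$), and that taken jointly over a countable dense collection of parameters $(t,s,x,x_1,x_2)$ they separate points, since any element of $\FC^{\gamma'}([0,T],\C_R^{\alpha'})$ is jointly continuous in the time and spatial variables. The standing hypothesis that all weak limit points share the same finite-dimensional space-time distributions therefore forces them to share the same law, whence the whole sequence converges weakly to a unique $\X\in\FC^{\gamma-}([0,T],\C_R^{\alpha-}(\R^d,\R^d))$.

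The main obstacle I anticipate is the first step: establishing the compact embedding \emph{at the level of rough paths}, which requires propagating the spatial compactness $\C_R^\alpha\hookrightarrow\C_R^{\alpha'}$ through the tensor product in which $\XX$ takes values, while simultaneously handling the temporal H\"older interpolation on both levels. A secondary but related subtlety is verifying that space-time evaluations of the second-level $\XX$ are continuous and separating; this relies on the choice of a reasonable crossnorm from \cref{lem:tensor_product} so that point evaluations on $\C_R^\alpha(\R^d,\R^d)\otimes\C_R^\alpha(\R^d,\R^d)$ are well defined and bounded.
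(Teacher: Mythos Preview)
Your overall strategy---tightness via compact embedding plus Markov's inequality, followed by identification of the limit via finite-dimensional space-time evaluations---matches the paper's proof. However, there is a genuine slip in your compactness argument that you should correct.

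You write that ``bounded sets of $\FC^\gamma([0,T],\CY)$ are relatively compact in $\FC^{\gamma'}([0,T],\CY)$ for $\gamma'<\gamma$ and any Banach space $\CY$''. This is precisely the false statement that \cref{ex:not_compact} refutes, so you cannot invoke it as a separate ``interpolation fact'' to be combined with the spatial compactness. The correct mechanism, which the paper carries out in \cref{lem:compactness}, is a single Arzel\`a--Ascoli argument: the $\gamma$-H\"older bound gives equicontinuity of $t\mapsto\XX_{0,t}^n$ in the $\C_R^{\alpha'}\otimes\C_R^{\alpha'}$ norm, and the \emph{spatial} compact embedding $\C_R^\alpha\hookrightarrow\C_R^{\alpha'}$ (together with its tensor analogue $\C^\alpha(\overline{B}_R\times\overline{B}_R)\hookrightarrow\C^{\alpha'}(\overline{B}_R\times\overline{B}_R)$) supplies pointwise relative compactness. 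This yields relative compactness in $\C([0,T],\C_R^{\alpha'}\otimes\C_R^{\alpha'})$, and only \emph{then} does interpolation against the uniform $\C^\gamma$ bound upgrade convergence to $\C^{2\gamma'}$. The two ingredients are not combined in parallel as you suggest, but rather the spatial compactness feeds into Arzel\`a--Ascoli and the temporal interpolation is applied afterward.

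On the identification step, your argument via point-separation over a countable dense set is morally right, but the rough path space is not separable, so one must be slightly careful. The paper handles this by noting that the limit laws are Radon (by Portmanteau) and then applying the standard fact (e.g.\ \cite[Exercise 7.14.79]{Bogachev2007}) that Radon measures are determined by a multiplicative point-separating family of bounded continuous functions; the family chosen consists of characteristic functionals of finite collections of space-time evaluations, which is automatically closed under multiplication. Your version would need this extra justification to be complete.
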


\begin{remark}
  \Cref{prop:weak_convergence} resolves the question raised in \cite[Remark 5.14]{Kelly-Melbourne}. In fact, by employing it, one could bypass the invocation of the martingale problem used in \cite[Section 6]{Kelly-Melbourne} in order to characterize the limiting equation. 
\end{remark}

\begin{proof}[of \cref{prop:weak_convergence}]
  Fix $\gamma^\prime<\gamma$ and $\alpha^\prime<\alpha$. Let $\varepsilon>0$. Then  $(\X^n)_{n\in\N}$ is tight in the space $\FC^{\gamma^\prime}\big([0,T],\C^{\alpha^\prime}_R(\R^d,\R^d)\big)$. Indeed, there is an $M>0$ such that  $ \P\big(\X^n\notin K_M\big)\leq\varepsilon$
  where   $$    K_M\define\big\{\X\in\FC^{\gamma}\big([0,T],\C^{\alpha}_R(\R^d,\R^d)\big):\,|\X|_{\C^\gamma\oplus\C^{2\gamma}}\leq M\big\}.
$$
By  \cref{lem:compactness} below this set is relatively compact as a subset of  $\FC^{\gamma'}\big([0,T],\C^{\alpha'}_R(\R^d,\R^d)\big)$.
  
 It remains to show that the finite-dimensional space-time distributions uniquely characterize the limit points of $(\X^n)_{n\in\N}$. 
 Let $\X$ and $\tilde\X$ be limit points of $(\X^n)_{n\in\N}$. By the Portmanteau theorem, the laws of both $\X$ and $\tilde\X$ are Radon measures. A  compactification argument (see e.g. \cite[Exercise 7.14.79]{Bogachev2007}) shows that these two measures coincide, provided we can exhibit a test set  $\CF$ of bounded continuous functions $f:\FC^{\gamma^\prime}\big([0,T],\C_R^{\alpha^\prime}(\R^d,\R^d)\big)\to\mathbf{C}$ such that
  \begin{itemize}
	\item $\CF$ separates points on $\FC^{\gamma^\prime}\big([0,T],\C_R^{\alpha^\prime}(\R^d,\R^d)\big)$,
	\item $\CF$ is closed under pointwise multiplication,
	\item $\1\in\CF$, where $\1$ is the function with constant value $1$,
	\item $\Expec{f(\X)}=\Expec{f(\tilde\X)}$ for all $f\in\CF$.
  \end{itemize}
  We choose $\CF$ as the family of characteristic functionals furnished by the space-time evaluations of \cref{def:space_time_distributions}:
  \begin{align*}
	 \CF\define\bigg\{(X,\XX)\mapsto&\exp\left(i\sum_{j=1}^n\pi_j\Big(X_{t_j}(x_j^1),\XX_{t_j}(x_j^1,x_j^2)\Big)\right):\\
	 & n\in\N,\,t_j\in[0,T],\,x^{1,2}_j\in\R^d,\,\pi_j\in\L\Big(\R^d\oplus\big(\R^d\otimes\R^d\big),\R\Big)\bigg\}.
  \end{align*} 
One verifies that $\CF$ satisfies the requirements above, whence $\X\overset{d}{=}\tilde\X$, as required.
\end{proof}

\begin{lemma}\label{lem:compactness}
  Let $R>0$, $\frac13<\gamma^\prime<\gamma\leq\frac12$, and $0<\alpha^\prime<\alpha$. Then the embedding
  \begin{equation}\label{eq:embedding}
	\FC^\gamma\big([0,T],\C_R^\alpha(\R^d,\R^d)\big)\hookrightarrow\FC^{\gamma^\prime}\big([0,T],\C_R^{\alpha^\prime}(\R^d,\R^d)\big)
  \end{equation}
  is compact. \end{lemma}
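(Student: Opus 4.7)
The key point is that the compactness of the embedding comes from the restriction to functions supported in the fixed compact ball $B_R$: this makes $\C_R^\alpha \hookrightarrow \C_R^{\alpha'}$ compact for $\alpha' < \alpha$ by Arzelà--Ascoli, which rescues us from the pathology of \cref{ex:not_compact}. My plan is to imitate the classical finite-dimensional compactness argument (as in \cite[Chapter 5]{Friz-Victoir}) for each level of the rough path, with the compact embedding on the fibre replacing the Heine--Borel property.

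First I would take a bounded sequence $\X^n = (X^n, \XX^n)$ in $\FC^\gamma\big([0,T],\C_R^\alpha(\R^d,\R^d)\big)$ with $|\X^n|_{\C^\gamma\oplus\C^{2\gamma}} \leq M$, and extract a subsequence convergent in $\FC^{\gamma'}\big([0,T],\C_R^{\alpha'}(\R^d,\R^d)\big)$. For the first level, the family $\{X^n_t : t\in[0,T],\,n\in\N\}$ is bounded in $\C_R^\alpha$, hence, by the compact embedding $\C_R^\alpha \hookrightarrow \C_R^{\alpha'}$, relatively compact in $\C_R^{\alpha'}$. Combined with the uniform equicontinuity
\begin{equation*}
\|X^n_t - X^n_s\|_{\C_R^{\alpha'}} \leq \|X^n_t - X^n_s\|_{\C_R^{\alpha}} \leq M|t-s|^\gamma,
\end{equation*}
Arzelà--Ascoli in $C\big([0,T],\C_R^{\alpha'}\big)$ yields a subsequence $X^{n_k} \to X$ uniformly, with $X \in C^\gamma\big([0,T],\C_R^{\alpha'}\big)$. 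The upgrade to $\C^{\gamma'}$-convergence is a standard interpolation: for $\delta>0$,
\begin{equation*}
|X^{n_k} - X|_{\C^{\gamma'}} \leq 2M\delta^{\gamma-\gamma'} + \tfrac{2}{\delta^{\gamma'}}\|X^{n_k}-X\|_{\infty,\C_R^{\alpha'}},
\end{equation*}
and choosing $\delta$ small, then $k$ large, the right-hand side vanishes.

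Next I would do the same for $\XX^n$ at the second level, viewing $\XX^n_{s,t}$ through the embedding of \cref{lem:tensor_product} as a function on $B_R \times B_R$ with values in $\R^d \otimes \R^d$ in a suitable Hölder norm. The key compactness ingredient is that $\C_R^\alpha \otimes \C_R^\alpha$ (with the reasonable crossnorm inherited from the embedding into $\C^{\alpha}(B_R \times B_R)$) embeds compactly into $\C_R^{\alpha'} \otimes \C_R^{\alpha'}$, again by Arzelà--Ascoli on the compact product $B_R \times B_R$. Together with the uniform bound $\|\XX^n_{s,t}\|_{\C^\alpha \otimes \C^\alpha} \leq M|t-s|^{2\gamma}$, this allows us to extract, along a further subsequence, a limit $\XX^{n_k} \to \XX$ in $\C^{2\gamma'}\big(\Delta_T, \C_R^{\alpha'} \otimes \C_R^{\alpha'}\big)$ by the very same two-scale interpolation argument as for the first level.

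Finally, I would confirm that $(X,\XX)$ is genuinely a rough path by passing Chen's relation \eqref{eq:chen} to the limit: for each fixed $0 \leq s \leq u \leq t \leq T$, both sides of
\begin{equation*}
\XX^{n_k}_{s,t} - \XX^{n_k}_{s,u} - \XX^{n_k}_{u,t} = X^{n_k}_{s,u} \otimes X^{n_k}_{u,t}
\end{equation*}
converge in the reasonable crossnorm thanks to the bilinearity and continuity of the tensor product on bounded sets. The main obstacle in this plan is the compact embedding at the tensor level: one must verify that the reasonable crossnorm on $\C_R^\alpha \otimes \C_R^\alpha$ is compatible with the Arzelà--Ascoli argument on $B_R \times B_R$, which is precisely what \cref{lem:tensor_product} provides, since the image space $\C^{\alpha}(B_R \times B_R, \R^d \otimes \R^d)$ embeds compactly into its $\alpha'$-counterpart.
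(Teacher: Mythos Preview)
Your proposal is correct and follows essentially the same route as the paper: both arguments hinge on the compact embedding $\C^{\alpha}(\overline{B}_R)\hookrightarrow\C^{\alpha'}(\overline{B}_R)$ (and its product version for the second level via \cref{lem:tensor_product}) to obtain pointwise relative compactness, combine it with the $\gamma$-H\"older equicontinuity to apply the general Arzel\`a--Ascoli theorem, and then interpolate to upgrade uniform convergence to $\gamma'$-H\"older convergence. The only cosmetic difference is that the paper first uses Chen's relation to reduce the two-parameter process $\XX^n_{s,t}$ to the one-parameter path $t\mapsto\XX^n_{0,t}$ before applying Arzel\`a--Ascoli on $[0,T]$, whereas you work directly on the compact simplex $\Delta_T$; either way works.
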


\begin{proof}
The embedding \eqref{eq:embedding} is certainly continuous. To see that it is actually compact, it is enough to show that the set 
$
	K_1\define\big\{\X\in\FC^{\gamma^\prime}\big([0,T],\C^{\alpha^\prime}_R(\R^d,\R^d)\big):\,|\X|_{\C^\gamma\oplus\C^{2\gamma}}\leq 1\big\}$
  is relatively compact.
 
  We shall make use of a general version of the Arzel\`a-Ascoli theorem recalled in \cref{prop:ascoli} after the proof.
   Let $(\X^n)_{n\in\N}$ be a sequence in $K_1$. We need to show that both $(X^n)_{n\in\N}$ and $(\XX^n)_{n\in\N}$ are relatively compact in the spaces $\C^{\gamma^\prime}\big([0,T],\C^{\alpha^\prime}_R(\R^d,\R^d)\big)$ and $\C^{2\gamma^\prime}\big(\Delta_T,\C^{\alpha^\prime}_R(\R^d,\R^d)\otimes\C^{\alpha^\prime}_R(\R^d,\R^d)\big)$, respectively. Since the arguments are similar, we only detail the relative compactness of $(\XX^n)_{n\in\N}$.

  By the algebraic constraint \eqref{eq:chen} and a straight-forward interpolation estimate (see e.g. \cite[Exercise 2.9]{Friz-Hairer}), it is enough to show that $(\XX^n_{0,\cdot})_{n\in\N}$ is relatively compact in $\C\big([0,T],\C^{\alpha^\prime}_R(\R^d,\R^d)\otimes\C^{\alpha^\prime}_R(\R^d,\R^d)\big)$.  
  First note that this family is  equicontinuous. Indeed, again by Chen's relation,
  \begin{equation*}
	\|\XX^n_{0,t}-\XX^n_{0,s}\|_{\C^{\alpha^\prime}\otimes\C^{\alpha^\prime}}\leq\|\XX^n_{s,t}\|_{\C^{\alpha^\prime}\otimes\C^{\alpha^\prime}}+|X^n_s|_{\C^{\alpha^\prime}}|X^n_{s,t}|_{\C^{\alpha^\prime}}
  \end{equation*}
  for all $0\leq s\leq t\leq T$. We are thus left to show that, for each $t\in[0,T]$, the set $\big\{\XX^n_{0,t}:\,n\in\N\big\}$ is relatively compact in $\C_R^{\alpha^\prime}(\R^d,\R^d)\otimes\C_R^{\alpha^\prime}(\R^d,\R^d)$ in order to conclude with \cref{prop:ascoli} below. For this it is enough to note the compact embedding
  \begin{equation*}
	\C^{\alpha}\big(\overline{B}_R\times\overline{B}_R,\R^d\otimes\R^d\big)\hookrightarrow\C^{\alpha^\prime}\big(\overline{B}_R\times\overline{B}_R,\R^d\otimes\R^d\big),
  \end{equation*}
  which  in turn again follows from the Arzel\`a-Ascoli theorem---but this time in the spatial coordinate. 
\end{proof}

The following version of the Arzel\`a-Ascoli theorem, employed in the previous proof, can be found in multiple places, see e.g. {\cite[Theorem 7.17]{Kelley1975}}:
\begin{proposition}[Arzel\`a-Ascoli]\label{prop:ascoli}
  Let $X$ be a compact metric space and $Y$ be a metric space. Let $\C(X,Y)$ be the space of continuous mappings $f:X\to Y$, equipped with the uniform topology. Then a set $K\subset\C(X,Y)$ is relatively compact if and only if
  \begin{itemize}
	\item the set $\{f(x):\,f\in K\}$ is relatively compact in $Y$ for each $x\in X$ and
	\item the set $K$ is equicontinuous, i.e., for all $\varepsilon>0$, there is a $\delta>0$ such that $d_Y\big(f(x),f(y)\big)\leq\varepsilon$ for all $f\in K$, provided that $d_X(x,y)\leq\delta$.
  \end{itemize}
\end{proposition}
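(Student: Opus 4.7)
The plan is to prove both implications separately, treating the necessity as a relatively quick consequence of continuity of evaluation maps together with uniform continuity on compact spaces, and the sufficiency via a diagonal extraction argument that combines separability of $X$ with the equicontinuity hypothesis.

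For necessity, suppose $K\subset\C(X,Y)$ is relatively compact in the uniform topology. Fix $x\in X$ and note that the evaluation map $\mathrm{ev}_x:\C(X,Y)\to Y$, $f\mapsto f(x)$, is continuous (indeed $1$-Lipschitz when $\C(X,Y)$ is equipped with the supremum metric). Hence $\{f(x):f\in K\}=\mathrm{ev}_x(K)$ is the image of a relatively compact set under a continuous map and is therefore relatively compact in $Y$. For equicontinuity, given $\varepsilon>0$, use relative compactness of $K$ to find a finite $\frac{\varepsilon}{3}$-net $\{f_1,\dots,f_n\}\subset\C(X,Y)$ for $K$. Each $f_i$ is continuous on the compact metric space $X$, hence uniformly continuous, so there is a $\delta_i>0$ such that $d_Y(f_i(x),f_i(y))\leq\frac{\varepsilon}{3}$ whenever $d_X(x,y)\leq\delta_i$. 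Taking $\delta=\min_i\delta_i$ and using the triangle inequality after approximating an arbitrary $f\in K$ by the nearest $f_i$, one obtains $d_Y(f(x),f(y))\leq\varepsilon$ whenever $d_X(x,y)\leq\delta$, as required.

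For sufficiency, one extracts a uniformly convergent subsequence from any sequence $(f_n)_{n\in\N}\subset K$. Since $X$ is a compact metric space, it is separable; let $\{x_k\}_{k\in\N}$ be a countable dense subset. By the pointwise relative compactness hypothesis at $x_1$, there is a subsequence $(f_n^{(1)})$ with $f_n^{(1)}(x_1)$ convergent in $Y$; inductively extract nested subsequences $(f_n^{(k)})$ such that $f_n^{(k)}(x_j)$ converges for each $j\leq k$, and take the diagonal $g_n\define f_n^{(n)}$. Then $g_n(x_k)$ converges for every $k\in\N$. Now use equicontinuity: given $\varepsilon>0$, choose $\delta>0$ from the equicontinuity definition, cover $X$ by finitely many $\delta$-balls centered at dense points $x_{k_1},\dots,x_{k_m}$, and choose $N$ so that $d_Y(g_n(x_{k_j}),g_\ell(x_{k_j}))\leq\varepsilon$ for all $j$ and all $n,\ell\geq N$. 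A three-$\varepsilon$ triangle-inequality estimate then yields $\sup_{x\in X}d_Y(g_n(x),g_\ell(x))\leq 3\varepsilon$, so $(g_n)$ is uniformly Cauchy. Completeness of the limit space is not assumed, but the limit exists pointwise in the closure of $\{f(x):f\in K\}$, which is compact hence complete, and the uniform Cauchy property upgrades pointwise convergence to uniform convergence to a continuous function.

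The main obstacle is purely bookkeeping in the sufficiency direction: one must verify that the uniform Cauchy sequence actually converges in $\C(X,Y)$ without assuming $Y$ is complete. This is handled by observing that the pointwise limits lie in the closure of the relatively compact sets $\overline{\{f(x):f\in K\}}$ (which are complete because compact metric subspaces are complete), and that equicontinuity propagates to the limit, ensuring continuity. Everything else is a standard diagonal and triangle-inequality argument.
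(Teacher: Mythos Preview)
Your proof is correct. The paper does not actually prove this proposition; it simply states it as a well-known fact and cites Kelley's \emph{General Topology} (Theorem 7.17). Your argument is the standard one found in such references: necessity via continuity of evaluation and a finite-net argument for equicontinuity, sufficiency via diagonal extraction on a countable dense subset followed by a three-$\varepsilon$ estimate. You also correctly handle the one subtle point---that $Y$ is not assumed complete---by noting that the pointwise limits lie in the compact (hence complete) closure of $\{f(x):f\in K\}$, so the uniformly Cauchy subsequence does converge in $\C(X,Y)$.
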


\subsection{Proof of \cref{thm:homogenization}}

We can now conclude the proof our homogenization result:
\begin{proof}[of \cref{thm:homogenization}]
We first assume that $f(\cdot,y)$ and $g$ are supported in $\overline{B}_R$ for each $y\in\R^n$. Owing to \cref{prop:rde_existence,lem:ode}, it is enough to show that
  \begin{equation}\label{eq:weak_to_show}
	\X^\varepsilon\Rightarrow \X
  \end{equation}
  in $\FC^\gamma\big([0,T],\C^{3-}_R(\R^d,\R^d)\big)$.
  We wish to employ \cref{prop:weak_convergence}. To see that
  \begin{equation}\label{eq:tightness}
	\sup_{\varepsilon\in(0,1]}\Expec{\big|X^\varepsilon|_{\C^\gamma}}<\infty,
  \end{equation}
  we argue as in \cref{lem:lp_holder}: First notice that, for any $i=1,\dots,d$ and any $p\geq 1$,
  \begin{align*}
  	\left\|\sup_{x\in\overline{B}_R}\frac{1}{\sqrt{\varepsilon}}\int_s^t f_i(x,y_{\frac{u}{\varepsilon}})\,du\right\|_{L^p(\Omega)}&=\frac{1}{\sqrt{\varepsilon}}\left\|\sup_{x\in\overline{B}_R}\sum_{\bl\in\N_0^n}c_\bl^i(x)\int_s^t  H_\bl(y_{\frac{u}{\varepsilon}})\,du\right\|_{L^p(\Omega)}\\
  	&\leq\frac{1}{\sqrt{\varepsilon}}\sum_{\bl\in\N_0^n}\sup_{x\in\overline{B}_R}\big|c_\bl^i(x)\big|\left\|\int_s^t H_\bl(y_{\frac{u}{\varepsilon}})\,du\right\|_{L^p(\Omega)}.
  \end{align*}
  By hypercontractivity and our assumption $c_{\bl}(x)=0$ for all $x\in\R^d$ whenever $|\bl|<\inf_x\herm\big(f(x,\cdot)\big)$, we get as in \eqref{eq:lp_hypercontractivity}
  \begin{equation*}
  	\left\|\sup_{x\in\overline{B}_R}\frac{1}{\sqrt{\varepsilon}}\int_s^t f_i(x,y_{\frac{u}{\varepsilon}})\,du\right\|_{L^p(\Omega)}\leq\sqrt{|t-s|}\sum_{\bl\in\N_0^n}\sup_{x\in\overline{B}_R}\big|c_\bl^i(x)\big|\sqrt{\bl!}\hat{\Theta}^{\f{| \bl | }{2}}(2n-1)^{\frac{|\bl|}{2}}(p-1)^{\frac{|\bl|}{2}}.
  \end{equation*}
  The series on the right-hand side is finite by the assumption \eqref{supremum-norm-1}. By \cref{lem:hermite_rank_derivative}, we know that the derivatives of $f$ have (at least) the same Hermite rank. Hence, we can similarly prove that
  \begin{equation*}
   	\left\|\sup_{x\in\overline{B}_R}\frac{1}{\sqrt{\varepsilon}}\int_s^t D^k_x f_i(x,y_{\frac{u}{\varepsilon}})\,du\right\|_{L^p(\Omega)}\lesssim\sqrt{|t-s|},\qquad k=1,2,3
  \end{equation*}
  and \eqref{eq:tightness} follows by Kolmogorov's continuity theorem. Arguing as in \cref{lem-tightness-iterated-hermite-poly} also shows that $\sup_{\varepsilon\in(0,1]}\Expec{|\XX^\varepsilon|_{\C^{2\gamma}}|}<\infty$. Then 
\Cref{thm-A} and \cref{rem:stratonovich_correction} show that $\X^\epsilon$ converges to $\X$ with $\XX_{s,t}(x,z)=(\Upsilon\otimes\Upsilon)\WW_{s,t}^{\text{It\^o}} (x,z)+\Lambda(x,z) (t-s)$.

It remains to prove statement \ref{it:sde}. Observe that $X$ is a Gaussian process  with covariance $\sigma_{i,j}(x,z)(s\wedge t)$ and 
\begin{equation*}
	G(x,t)\define X_t(x)+\Gamma t
\end{equation*}
 is a semi-martingale with spatial parameters and characteristics 
 \begin{equation*}
 	A_{i,j}(x,z,t)\define \sigma_{i,j}(x,z)t \qquad \hbox{ and } \qquad \Gamma(x)t.
 \end{equation*}
The characteristics are sufficiently regular so that  the Kunita type equation 
\begin{equation}\label{Kunita-limit}
dx_t=F(x_t, dt) +\Gamma(x_t)
\end{equation}
is well posed. The regularity of  $A_{i,j}$ and $\Gamma$ comes from the uniform correlation decay  assumption $\inf_x\herm\big(f(x,\cdot)\big)>\f 2 \beta$. For example, the functions  
$\Expec{ D (f_{i}(x,y_s)  f_{j}(z,y_0) +  f_{i}(x,y_0)  f_{j}(z,y_s))}$  are absolutely integrable  in $s$ on $[0, \infty)$, consequently
 $$\sigma_{i,j}(x,z)=\int_0^{\infty}\Expec{f_{i}(x,y_r)  f_{j}(z,y_0) +  f_{i}(x,y_0)  f_{j}(z,y_r)} \,dr$$
 is $\C_b^3$ in both variables and jointly continuous in $(x,z)$. The same argument applies to $\Gamma$.

Furthermore, by the theory for SDEs driven by semi-martingales with spatial parameters \cite{Kunita}, 
there is a unique Brownian flow $\phi_{s,t}(x)$ to the equation (\ref{Kunita-limit}) and for each $s,x$,
$$\phi_{s,t}(x)-x-\int_s^t \Gamma(\phi_{s,r})dr$$
is a square integrable martingale with $\lim_{h\to 0} \E [ (\phi_{t, t+h}(x)-x) (\phi_{t, t+h}(y)-y)^T]=\sigma_{i,j}(x,y) t$.
The RDE is `equivalent' to the Kunita type SDE which can be seen from the Riemann sum approximation for integration with respect to the semi-martingale $G$ : 
$$\int_0^t G(x_s, ds)\sim \sum_{[u,v]\subset {\mathcal P}} G(x_u, u)-G(x_v, v)$$
where ${\mathcal P}$ denotes  a partition of $[0, t]$. 
 On the other hand, the rough integral, 
$$\int_0^t \delta(x_s) d\X_s\sim  \sum_{[u,v]\subset {\mathcal P}} \delta(x_u)(X_u-X_v)+ \delta^\prime (x_u) (\XX^{\textup{It\^o}}_{u,v}+\Lambda (u-v)),$$
where the prime on $\delta(x_t)$ denotes its Gubinelli derivative, which is $(D\delta)_{x_u} (\delta(x_u)\cdot) (\cdot)$ 
when applied to $\Lambda(\cdot,\cdot)$, is:
$$\Gamma_j(x)=\int_0^\infty \E[ \Braket{D f_j (\cdot, y_r)_x, f(x,y_0)}] \,dr.$$
It is then routine to verify that $\phi_t(x)$ is the solution to the rough differential equation 
\begin{equation}\label{eq:ito_limit-2}
dx_t=\delta(x_t) d\X_t+\Lambda(x_t) dt.
\end{equation}
The latter equation is also well posed.	
	
We then fix $R>0$ and let $\eta_R:\R^d\to\R_+$ be a non-negative smooth function with $\eta_R=1$ on $\overline{B}_R$ and $\eta_R=0$ on $B_{2R}^c$. Set $f_R(x,y)\define \eta_R(x)f(x,y)$. By the first part of the theorem, we know that there is an $\X^R=(X^R,\XX^R)\in\FC^{\gamma}\big([0,T],\C_R^{3-}(\R^d,\R^d)\big)$ such that $x^{R,\varepsilon}\Rightarrow x^R$, where
\begin{align*}
	dx_t^{R,\varepsilon}&=\frac{1}{\sqrt{\varepsilon}}f_R(x_t^{R,\varepsilon},y_t)\,dt, \\
	dx_t^{R}&=\delta(x_t^{R})\,d\X^R_t.
\end{align*}
As before, the convergence is simultaneous for any finite number of initial conditions.
Mimicking that for $G$, let $G_R=\X^{R}+\Gamma_R t$ denote the spatial semi-martingale with characteristics $\sigma^R_{i,j}(x,y)$ and $\Gamma_R t$. Then for each initial condition $x_0$, the solutions of the Kunita type SDEs
\begin{equation*}
	dx_t^{R}=G_R(x_t^{R},dt)
\end{equation*}
converge weakly to the solution of \eqref{eq:ito_limit-2} with the same initial distribution. We have applied  \cite[Thm. 5.2.1]{Kunita},  it is trivial to verify 
the conditions there: $\sup _x |D^a  D^b \sigma_R(x,y)|\big|_{y=x}$, where $a,b\in \{0,1\}$ and $\sup_x |D_x \Gamma_R|$ are uniformly bounded, also the characteristics are in the required class $\tilde C_b^\alpha$ for some $\alpha>0$, c.f. \cite{Kunita},  and converge uniformly in $(x,y)$ on compact sets. The convergence is in the sense that, given any initial data $(x_0^i)$, $i=1, \dots, N$, the $N$-point motion converges.
 
Finally, we show the weak convergence of $x^{\epsilon}\Rightarrow x$. By the Portemanteau theorem,  it is equivalent to showing that 
$\limsup_{\varepsilon\to 0}\P(x^\varepsilon\in A)\leq\P(x\in A)$  for any closed set $A\subset\C\big([0,T],\R^d\big)$.
Since $x^{R, \epsilon}\Rightarrow x^R$ weakly in $\C\big([0,T],\R^d\big)$, the Portemanteau theorem gives the estimate:
  \begin{align}
  	\limsup_{\varepsilon\to 0}\P(x^\varepsilon\in A)&\leq\limsup_{\varepsilon\to 0}\P(x^{R,\varepsilon}\in A)+\limsup_{\varepsilon\to 0}\P\big(|x^{R,\varepsilon}|_\infty>R\big)\nonumber\\
  	&\leq \P(x^{R}\in A)+\P(|x^R|_\infty\geq R).\label{eq:localization}
  \end{align}
Hence, $\limsup_{R\to\infty}\P(x^{R}\in A)\leq\P(x\in A)$. Note also that $\P(|x^R|_\infty\geq R)=\P(|x|_\infty\geq R)$ for $R>\|x_0\|_{L^\infty}$. Markov's inequality gives that $\P(|x|_\infty\geq R)\to 0$ as $R\to\infty$. Consequently, sending $R\to\infty$ in \eqref{eq:localization}, we have proven that
$
  	\limsup_{\varepsilon\to 0}\P(x^\varepsilon\in A)\leq\P(x\in A)$
  which concludes the proof for the weak convergence of $x^{\epsilon}\Rightarrow x$. The proof for the convergence of the $N$-point motion is an easy adaption of the above, as we have the $N$-point convergence for both $x^{\epsilon, R}$ as $\epsilon \to 0$ and for $x^R$ as $R\to \infty$.
\end{proof}

{
\normalem
\small
\bibliographystyle{alpha}
\bibliography{drafts.bib}
}

\end{document}